\newtheorem{theorem}{Theorem}[section]
\newtheorem{lemma}[theorem]{Lemma}
\newtheorem{proposition}[theorem]{Proposition}
\newtheorem{corollary}[theorem]{Corollary}
\newtheorem{conjecture}[theorem]{Conjecture}
\theoremstyle{definition}
\newtheorem{algorithm}[theorem]{Algorithm}
\newenvironment{example}
  {\pushQED{\qed}\examplex}
  {\popQED\endexamplex}
\theoremstyle{remark}
\newtheorem{remark}[theorem]{Remark}
\newtheorem{remarks}[theorem]{Remarks}
\numberwithin{equation}{section}
\newcommand{\ind}{{\mathbbm{1}}}
\newcommand{\1}[1]{{\ind\mkern -1.5mu}{\{#1\}}}
\newcommand{\2}[1]{\ind_{#1}}
\DeclareMathOperator{\IE}{\mathbb{E}}
\DeclareMathOperator{\IP}{\mathbb{P}}
\DeclareMathOperator{\Var}{\mathbb{V}ar}
\DeclareMathOperator{\tIE}{{\widetilde{\mathbb{E}}}}
\DeclareMathOperator{\tIP}{{\widetilde{\mathbb{P}}}}
\DeclareMathOperator{\tVar}{{\widetilde{\mathbb{V}}ar}}
\DeclareMathOperator{\tCov}{{\widetilde{\mathbb{C}}ov}}
\newcommand{\eps}{\varepsilon}
\newcommand{\re}{{\mathrm{e}}}
\newcommand{\ud}{{\mathrm d}}
\newcommand{\R}{{\mathbb R}}
\newcommand{\Z}{{\mathbb Z}}
\newcommand{\N}{{\mathbb N}}
\newcommand{\ZP}{{\mathbb Z}_+}
\newcommand{\RP}{{\mathbb R}_+}
\newcommand{\as}{\ \text{a.s.}}
\newcommand{\bbX}{{\mathbb X}}
\newcommand{\cF}{{\mathcal F}}
\newcommand{\cN}{{\mathcal N}}
\newcommand{\tW}{{\widetilde W}}
\newcommand{\tX}{{\widetilde X}}
\newcommand{\hv}{{\hat v}}
\newcommand{\hrho}{{\hat \rho}}
\newcommand{\tod}{\overset{\mathrm{d}}{\longrightarrow}}
\def\namedlabel#1#2{\begingroup  
    (#2)%
    \def\@currentlabel{#2}%
    \phantomsection\label{#1}\endgroup
}
\newlist{myenumi}{enumerate}{10}
\setlist[myenumi]{leftmargin=0pt, labelindent=\parindent, listparindent=\parindent, labelwidth=0pt, itemindent=!, itemsep=1pt, parsep=4pt}
\newlist{thmenumi}{enumerate}{10}
\setlist[thmenumi]{leftmargin=0pt, labelindent=\parindent, listparindent=\parindent, labelwidth=0pt, itemindent=!}
\title{Dynamics of finite inhomogeneous particle systems with exclusion interaction}
\author{Vadim Malyshev\footnote{\raggedright Faculty of Mechanics and Mathematics, Lomonosov Moscow State University, Moscow, Russia.} \and Mikhail Menshikov\footnote{\raggedright Department of Mathematical Sciences, Durham University, Durham, UK. \href{mailto:mikhail.menshikov@durham.ac.uk}{\texttt{mikhail.menshikov@durham.ac.uk}}, \href{mailto:andrew.wade@durham.ac.uk}{\texttt{andrew.wade@durham.ac.uk}}.}
 \and Serguei Popov\footnote{Centro de Matem\'atica, University of Porto, Porto, Portugal. \href{mailto:serguei.popov@fc.up.pt}{\texttt{serguei.popov@fc.up.pt}}.} \and Andrew Wade\footnotemark[2]}
\date{1 October 2023}
\begin{document}
\maketitle

\begin{quote}
{{\bf Dedication.}}{~The first version of this paper was completed in June~$2022$. Vadim Alexandrovich Malyshev passed away on 30~September 2022.
The other authors, both long-term and recent collaborators, 
would like to express their deep gratitude for Vadim's influence in their mathematical lives, and 
dedicate this article to his memory. }
\end{quote}

\begin{abstract}
We study finite particle systems on the one-dimensional integer lattice, where each particle performs a continuous-time nearest-neighbour random walk, with jump rates intrinsic to each particle, subject to an exclusion interaction which suppresses jumps that would lead to more than one particle occupying any site. We show that the particle jump rates determine explicitly a unique partition of the system into maximal stable sub-systems, and that this partition can be obtained by a linear-time algorithm using only elementary arithmetic. The internal configuration of each stable sub-system possesses an explicit product-geometric limiting distribution, and the location of each stable sub-system obeys a strong law of large numbers with an explicit speed; the characteristic parameters of each stable sub-system are simple functions of the rate parameters for the corresponding particles. For the case where the entire system is stable, we provide a central limit theorem describing the fluctuations around the law of large numbers. Our approach draws on ramifications, in the exclusion context, of classical work of Goodman and Massey on partially-stable Jackson queueing networks.
\end{abstract}

\noindent
{\em Key words:}  
Exclusion process; 
Jackson network; 
partial stability; 
interacting particle systems; 
lattice Atlas model; 
asymptotic speeds;
law of large numbers;
central limit theorem; 
product-geometric distribution.

\medskip

\noindent
{\em AMS Subject Classification:}  60K35 (Primary)  60J27, 60K25, 90B22 (Secondary).

\section{Introduction}
\label{sec:intro}

The exclusion process is a prototypical model of non-equilibrium statistical mechanics, representing the dynamics of a  lattice gas
with hard-core interaction between particles, originating in the mathematical literature with~\cite{spitzer} and in the applied literature with~\cite{mgp}.
In the present paper, we consider systems of  $N+1$ particles on the one-dimensional integer lattice $\Z$, performing continuous-time, nearest-neighbour random walks with exclusion interaction,
in which each particle possesses arbitrary finite positive jumps rates.  
The configuration space of the system is $\bbX_{N+1}$, where, for $n \in \N := \{1,2,3,\ldots\}$,
\begin{equation}
\label{eq:configuration-space}
\bbX_n  := \{ (x_1, \ldots, x_{n} ) \in \Z^{n} : x_1 < x_2 < \cdots < x_{n} \} .  \end{equation}
The exclusion constraint means that there can be at most one particle at any site of~$\Z$ at any given time.
The dynamics of the interacting particle system are described by a time-homogeneous, continuous-time Markov chain on $\bbX_{N+1}$,
specified by non-negative rate parameters $a_1,b_1, \ldots, a_{N+1},b_{N+1}$. The $i$th particle (enumerated left to right)
attempts to make a nearest-neighbour jump to the left at rate~$a_i$. If, when the corresponding exponential clock rings,
the site to the left is unoccupied, the jump is executed and the particle moves, but if the destination is occupied by another particle, 
the attempted jump is suppressed and the particle does not move (this is the exclusion rule). 
Similarly, the $i$th particle attempts to jump to the right at rate $b_i$,
subject to the exclusion rule. 
The exclusion rule ensures that the left-to-right order of the particles is always preserved. See Figure~\ref{fig:particles} for a schematic.
 
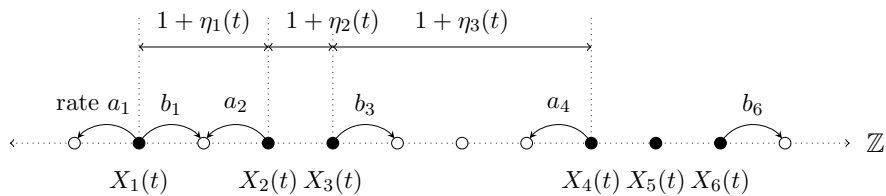
\begin{figure}[b]
\centering
\scalebox{0.85}{
 \begin{tikzpicture}[domain=0:1, scale=1.0]
\draw[dotted,<->] (0,0) -- (13,0);
\node at (13.4,0)       {$\Z$};
\draw[black,fill=white] (1,0) circle (.5ex);
\draw[black,fill=black] (2,0) circle (.5ex);
\node at (2,-0.6)       {\small $X_1 (t)$};
\draw[black,fill=white] (3,0) circle (.5ex);
\draw[black,fill=black] (4,0) circle (.5ex);
\node at (4,-0.6)       {\small $X_2 (t)$};
\draw[dotted] (2,0) -- (2,2);
\draw[dotted] (4,0) -- (4,2);
\draw[black,<->] (2,1.5) -- (4,1.5);
\node at (3,1.9)       {\small $1+\eta_1 (t)$};
\draw[black,fill=black] (5,0) circle (.5ex);
\node at (5,-0.6)       {\small $X_3 (t)$};
\draw[dotted] (5,0) -- (5,1.7);
\draw[black,<->] (4,1.5) -- (5,1.5);
\node at (5,1.9)       {\small $1+\eta_2 (t)$};
\draw[black,fill=white] (6,0) circle (.5ex);
\draw[black,fill=white] (7,0) circle (.5ex);
\draw[black,fill=white] (8,0) circle (.5ex);
\draw[black,fill=black] (9,0) circle (.5ex);
\node at (9,-0.6)       {\small $X_4 (t)$};
\draw[dotted] (9,0) -- (9,2);
\draw[black,<->] (5,1.5) -- (9,1.5);
\node at (7,1.9)       {\small $1+\eta_3 (t)$};
\draw[black,fill=black] (10,0) circle (.5ex);
\node at (10,-0.6)       {\small $X_5 (t)$};
\draw[black,fill=black] (11,0) circle (.5ex);
\node at (11,-0.6)       {\small $X_6 (t)$};
\draw[black,fill=white] (12,0) circle (.5ex);
\node at (1.3,0.6)       {\small rate $a_1$};
\node at (2.5,0.6)       {\small $b_1$};
\draw[black,->,>=stealth] (2,0) arc (30:141:0.58);
\draw[black,->,>=stealth] (4,0) arc (30:141:0.58);
\node at (3.5,0.6)       {\small $a_2$};
\node at (5.5,0.6)       {\small $b_3$};
\draw[black,->,>=stealth] (9,0) arc (30:141:0.58);
\node at (8.5,0.6)       {\small $a_4$};
\node at (11.5,0.6)       {\small $b_6$};
\draw[black,->,>=stealth] (2,0) arc (150:39:0.58);
\draw[black,->,>=stealth] (5,0) arc (150:39:0.58);
\draw[black,->,>=stealth] (11,0) arc (150:39:0.58);
\end{tikzpicture}}
\caption{Schematic of the model in the case of $N+1 = 6$ particles, illustrating some of the main notation. Filled circles represent particles, empty circles represent unoccupied lattice sites, and
directed arcs represent admissible transitions, with exponential rates indicated.}
\label{fig:particles}
\end{figure}

Let $X (t) = (X_1 (t), \ldots, X_{N+1} (t) ) \in \bbX_{N+1}$ be the configuration of the Markov chain at time $t \in \RP := [0,\infty)$,
started from a fixed initial configuration $X(0) \in \bbX_{N+1}$. Denote  the number of empty sites (\emph{holes})
between particles $i$ and $i+1$ at time~$t \in \RP$ by
\begin{equation}
\label{eq:eta-def}
\eta_i (t) := X_{i+1}(t)-X_i(t)-1, 
\text{ for } i \in [N] := \{1,2,\ldots, N\},
\end{equation}
so $\eta_i (t) \in \ZP := \{ 0,1,2,\ldots \}$. An equivalent description of the system is captured by 
the continuous-time Markov chain $\xi := ( \xi (t) )_{t \in \RP}$, where 
\begin{equation}
\label{eq:xi-def}
\xi (t) := (X_1 (t), \eta_1(t), \ldots, \eta_N (t) ) \in \Z \times \ZP^N .\end{equation}
An important  fact is that $\eta := (\eta (t))_{t \in \RP}$, where $\eta(t) := (\eta_1(t), \ldots, \eta_N (t) ) \in \ZP^N$,
is \emph{also} a continuous-time Markov chain, which can  be represented via a \emph{Jackson network} of~$N$ queues of M/M/1 type (we explain this is Section~\ref{sec:jackson}). 
This is the reason that 
we take $N+1$ particles. The process $\eta$ can also be interpreted as a generalization, with site-dependent rates and emigration and immigration at the boundaries, 
of the \emph{zero-range process} on the finite set $[N]$, with $\eta(t)$ being the vector of
particle occupancies at time $t$. 

The main contribution of the present paper is to characterize the long-term dynamics of the particle system. In particular,
we give a complete and explicit classification with regards to stability (which subsets of particles are typically relatively mutually close) and law of large numbers behaviour (particle speeds),
as well as some results on fluctuations in the case where the whole system is stable. 
Our main results are stated formally in Section~\ref{sec:results} below; their content is summarized as follows.

Theorem~\ref{thm:main}  shows that there is a unique partition of the system, determined by the
parameters $a_i, b_i$, into
maximal stable sub-systems, which we call \emph{stable clouds}. 
Theorem~\ref{thm:main}  shows that the internal configuration of each stable cloud
possesses an explicit product-geometric limiting distribution (this is a precise sense in which the cloud is `stable'),
while  distances between particles in different clouds diverge in probability.
Theorem~\ref{thm:main} also shows 
that the location of each stable cloud obeys a strong law of large numbers
with an explicit speed; speeds of clouds are non-decreasing left to right. 
The cloud partition is characterized by a finite non-linear system,
which is the translation to our setting of the classical Goodman and Massey~\cite{GM84} equations for Jackson networks.
Moreover, in Theorem~\ref{thm:algorithm} we show that the cloud partition can be obtained by a linear-time algorithm of elementary steps, streamlining the Jackson algorithm from~\cite{GM84}
by exploiting the additional structure of our model. For the case where the entire system is stable, i.e., there is a single cloud, 
Theorem~\ref{thm:clt} gives a central limit theorem describing the fluctuations around the law of large numbers;
this time, the foundational result for Jackson networks is a general central limit theorem of Anantharam and Konstantopoulos~\cite{ak}.
 Section~\ref{sec:results} presents these results, and several corollaries and examples, after introducing the necessary notation. 
First we indicate some relevant previous work.

Many aspects of interacting particle systems and their links to (finite and infinite) queueing networks are well understood,
with much attention on systems of infinitely many particles.
The exclusion and zero-range processes were introduced in a fundamental paper of Spitzer~\cite{spitzer}.
The earliest explicit mention of the link between the exclusion process and Jackson's results for queueing networks that we have seen is by Kipnis~\cite[p.~399]{kipnis}, in the case of homogeneous rates,
where the connection to the zero-range process is also given; see also~\cite{andjel,bfl,fpv}. Versions of these connections have facilitated much subsequent progress. 
For example, connections between variations on the totally asymmetric exclusion processes (TASEP), in which particles can only move to the right,
 and tandem queueing networks, in which customers are routed through a sequence of queues in series, are explored in~\cite{sepp,fm,kpsCMP}.
Other relevant work includes~\cite{arratia,khmelev,srinivasan}. 
Aspects of exclusion systems on $\Z$ with finitely many particles have been studied in the symmetric case~\cite{andjel2013}, and in 
totally asymmetric systems with different jump rates~\cite{rs}; 
exclusion interaction on finite lattice segments has also been studied~(see e.g.~\cite{cw}).

The authors are not aware of previous work on the decomposition into stable sub-systems of finite exclusion systems on $\Z$,
with general nearest-neighbour transition rates. While the connection between the exclusion process and Jackson networks is well known,
we have not seen the important work of Goodman and Massey~\cite{GM84} interpreted in the exclusion context before.
In the queueing context, the results of~\cite{GM84} characterize systems in which some but not all queues are stable;
this phenomenon has subsequently become known in the queueing literature as `partial stability'~\cite{afsw,bjl,avra}. Our main result
can thus be seen as a classification of partial stability for finite exclusion systems.

Yet another interpretation of $\eta$ is as a random walk on the orthant~$\ZP^N$ with boundary reflections.
For $N \in \{1,2\}$, there are exhaustive criteria for classifying stability of such walks (see e.g.~\cite{FMM,fim})
in terms of readily accessible quantities (first and second moments of increments).
For $N \in \{ 3, 4\}$ the generic classification is available, but requires precise knowledge of quantities 
which are hard to compute,
namely stationary distributions for lower-dimensional projections~\cite{FMM,ignatyuk}.
For $N \geq 5$, additional complexity arising from the structure of
high-dimensional dynamical systems means that the generic case
is intractable~\cite{gamarnik}.
In the present paper, we demonstrate that the dynamics of the particle system admits a complete, and explicit, stability description for any $N$,
demonstrating the special place of these models in the general theory.

In a continuum setting, there is an interesting comparison between our results and 
an extensive existing literature
on \emph{diffusions with rank-based interactions},
including the \emph{Atlas model} of financial mathematics and its relatives~\cite{pp,bfk,cdss,ik,ipbkf,kpsAIHP,tsai,sarantsevAIHP,sarantsevEJP}.
As we explain in Section~\ref{sec:discussion},
where we describe the continuum setting in more detail, 
the classification of the asymptotic behaviour of these continuum models 
has so far been limited to deciding whether the entire system is stable (i.e., a single stable cloud).
We believe that the ideas in the present paper indicate a route to obtaining more detailed
information (such as the full cloud decomposition) in these continuum models. We aim to address some of these questions in future work.
A direct comparison between our model and the continuum model is
not obvious, as there is a sense in which the collision mechanism in the continuum model
is \emph{elastic}, whereas the exclusion mechanism is \emph{inelastic}, but we describe in Section~\ref{sec:discussion}
an elastic modification of the particle system that bears a closer analogy with the continuum model. In addition,
aspects of the comparison go beyond analogy, as it is known that in certain parameter and scaling regimes, the continuum model serves as a scaling limit for certain discrete models; see~\cite[\S 3]{kpsAIHP},
and our Section~\ref{sec:discussion} below. Many of these models admit versions with infinitely-many particles (e.g.~\cite{sarantsevAIHP,sarantsevEJP}), which in certain cases reside in the famous KPZ universality class~\cite{wfs}.

\section{Main results}
\label{sec:results}

To state our main results, we define some quantities associated with our system, depending on (subsets of) the parameters $a_i, b_i$. For our main results, we will assume that at least all the~$b_i$ are positive.

\medskip
\begin{description}
\item\namedlabel{ass:positive-rates}{$\text{A}$} 
Suppose that $0 \leq a_i < \infty$ and $0 < b_i < \infty$ for all $i \in [N+1]$.
\end{description}
\medskip

Note that, by reversing the ordering of the particles (i.e., mapping~$\Z$ to $-\Z$),
we can swap the roles of the $a_i$ and $b_i$ in~\eqref{ass:positive-rates} and in the formulas that follow.
For $\ell \in \Z$, $m \in \N$,  call the set
\[ [ \ell ; m ] := \{ \ell, \ell+1, \ldots, \ell+m-1 \} \]
of $m$ consecutive integers a \emph{discrete interval};
implicit is that a discrete interval is non-empty. 
In the case $\ell = 1$, we set $[n]:= [ 1; n] := \{1,2,\ldots, n \}$ 
for $n \in \N$.
Given a discrete interval $I = [ \ell; m] \subseteq [N+1]$, which represents the 
	particles whose labels are in~$I$,
define
\begin{equation}
\label{eq:alpha-beta-def}
\begin{split}
\alpha (I) & := \alpha ( \ell ; m ) := \prod_{u=\ell}^{\ell+m-1} \frac{a_{u}}{b_{u}}, \text{ and } \\
\beta (I) & := \beta (\ell; m) := \frac{1}{b_{\ell+m-1}} \sum_{v=0}^{m-1} \prod_{u=1}^{v} \frac{a_{\ell+m-u}}{b_{\ell+m-u-1}};
\end{split}
\end{equation}
here and throughout the paper, the convention is that an empty sum is~$0$ and an empty product is~$1$. 
Then define
\begin{equation}
\label{eq:hv-def}
\hv ( I ) := \hv ( \ell ; m ) := \frac{1 - \alpha (I)}{\beta(I)} .
\end{equation}

The quantities $\alpha$, $\beta$ defined in~\eqref{eq:alpha-beta-def}
arise as solutions of certain balance
equations associated with a dual random walk derived from the dynamics of a tagged empty site of $\Z$ (a hole).
To describe this walk it is more convenient to work in the queueing setting, where the hole corresponds to a customer in the queueing network,
and we can impose a priority service policy to make sure the tagged customer is always served;
the resulting routing matrix~$P$ given by~\eqref{eq:P-def} below is then the source of the formulas for $\alpha$, $\beta$. The interpretation of $\hv (I)$ will be as a \emph{speed} for a putative stable sub-collection of particles.
For example, $\hv (\ell; 1) = b_\ell - a_\ell$ is the intrinsic speed of a singleton particle in free space;
whether its long-term dynamics matches this speed depends on its interaction with the rest of the system.
For general $m \in \N$, definition~\eqref{eq:hv-def} gives the formula
\begin{align}
\label{eq:hv-l-m}
 \hv (\ell; m) = \frac{\displaystyle 1 - \frac{a_\ell \cdots a_{\ell+m-1}}{b_\ell \cdots b_{\ell+m-1}}}{\displaystyle \frac{1}{b_{\ell+m-1}} + \frac{a_{\ell+m-1}}{b_{\ell+m-1} b_{\ell+m-2}} + \cdots + \frac{ a_{\ell+m-1} \cdots a_{\ell+1}}{b_{\ell+m-1} \cdots b_\ell}} .
\end{align}
		For a discrete interval  $I = [ \ell; m] \subseteq [N+1]$ with $m \geq 2$,
define for $j \in [\ell; m-1]$,
		\begin{equation}
		\label{eq:rho-I-def}
		\hrho_I (j) := \alpha ( \ell ; j+1 - \ell ) + \beta ( \ell ; j+1 - \ell ) \hv ( \ell ; m),
\end{equation} 
where $\alpha, \beta$ are given by~\eqref{eq:alpha-beta-def}, and $\hv$ is given by~\eqref{eq:hv-def}.
The interpretation of $\hrho_I (j)$ is as a stability parameter; it corresponds in the language of
queueing theory to a \emph{workload} for the queue associated with gap~$j$ in the system~$I$. See Section~\ref{sec:jackson} for elaboration of the queueing interpretation.
		
An ordered $n$-tuple $\Theta = (\theta_1, \ldots, \theta_n)$ of discrete intervals $\theta_i \subseteq [N+1]$
is called an \emph{ordered partition} of $[N+1]$ if (i) $\theta_i \cap \theta_j = \emptyset$ for all $i \neq j$; 
(ii) $\cup_{i=1}^n \theta_i = [N+1]$; and (iii) for every $\ell, r \in [n]$
with $\ell < r$, every $i \in \theta_{\ell}$ and $j \in \theta_{r}$ satisfy $i < j$. Here $n =: \vert \Theta \vert$ is the number of \emph{parts} in the partition.
Write $\theta \in \Theta$ if $\theta$ is one of the parts of $\Theta$. 
Given 
an ordered partition $\Theta = (\theta_1, \ldots, \theta_n)$,
we write $\Theta^\star := ( \theta_j : \vert \theta_j \vert \geq 2 )$
for the ordered non-singletons; if $\vert \theta_j \vert = 1$ for all $j$, we set $\Theta^\star := \emptyset$.
	 
For $I \subseteq [N+1]$ a  discrete interval, define
	\begin{equation}
	\label{eq:R-def}
	R_I (t) := \max_{i \in I} X_i (t) - \min_{i \in I} X_i (t) , \text{ for } t \in \RP, \end{equation}
	the total distance between the extreme particles indexed by~$I$. 
	For $I = [\ell;m]$ a  discrete interval with $m \geq 2$ elements, write $I^\circ := [\ell;m-1]$
for the discrete subinterval that omits the maximal element.
	If $\vert I\vert \geq 2$, define
	\begin{equation}
	\label{eq:partial-eta-def}
	\eta_I (t) := ( \eta_j (t) : j \in I^\circ ) ,
	\end{equation}
	the vector of particle separations restricted to particles in $I$.
	 
	Part of our result will be to identify an ordered partition $\Theta = (\theta_1, \ldots, \theta_n)$ of $[N+1]$
	in which each $\theta \in \Theta$ is  a \emph{stable cloud} in the long-term dynamics of the particle system. 
	Stability means that the relative displacements of the particles indexed by $\theta$ are ergodic, in a sense made precise in the statement of Theorem~\ref{thm:main} below, which includes an explicit limit distribution
	for $\eta_{\theta} (t)$; moreover, each cloud travels at an asymptotic speed. To describe the limit distribution for the displacements within each cloud, 
	we define, for  $I \subseteq [N+1]$  a   discrete interval with $\vert I\vert=1+k \geq 2$ elements,
	\begin{equation}
	\label{eq:limit-distribution-component}
	\varpi_I ( z_1, \ldots, z_{k} ) := \prod_{j \in I^\circ} \hrho_I (j)^{z_j} (1-\hrho_I(j)) , \text{ for } (z_1,\ldots, z_{k}) \in \ZP^{k} , \end{equation}
	where $\hrho_I$ is defined at~\eqref{eq:rho-I-def}. 
For $A \subseteq \ZP^k$, set $\varpi_I ( A) := \sum_{z \in A} \varpi_I (z)$. Then, if $0 < \hrho_I(j) < 1$ for every $j$,
$\varpi_I$ is a non-vanishing probability distribution on $\ZP^k$.
	
	The stable cloud decomposition is encoded in a \emph{general traffic equation},
arising from the connection between our model and a Jackson network
of queues in series with nearest-neighbour routing. We describe this connection in detail in Section~\ref{sec:jackson};
for now we introduce the notation needed to formulate the general traffic equation~\eqref{eq:general-balance}.
Define
\begin{equation}
\label{eq:mu-def}
\mu_i := b_i + a_{i+1}, \text{ for }
i \in [N]. \end{equation}
If $N=1$, define $\lambda_1 := a_1 + b_2$; else, for $N \geq 2$, define
\begin{equation}
\label{eq:lambda-def}
\lambda_1 := a_1, ~ \lambda_{N} := b_{N+1}, \text{ and }
\lambda_j := 0 \text{ for } 2 \leq j \leq N-1.
\end{equation} 
Also define the  matrix $P:= (P_{i,j})_{i,j \in [N]}$ by
\begin{equation}
\label{eq:P-def}
\begin{split}
P_{i,i-1} & := \frac{b_i}{\mu_i} = \frac{b_i}{b_i+a_{i+1}}, \text{ for } 2 \leq i \leq N; \\
P_{i,i+1} & := \frac{a_{i+1}}{\mu_i} = \frac{a_{i+1}}{b_i+a_{i+1}}, \text{ for } 1 \leq i \leq N-1;
\end{split}
\end{equation}
and $P_{i,j}:=0$ for all $i,j$ with $\vert i-j\vert \neq 1$. For vectors $x= (x_i)_{i \in [N]}$ and $y = (y_i)_{i \in [N]}$ in $\R^N$, write $x \wedge y$ for the
vector $(x_i \wedge y_i)_{i \in [N]}$. The general traffic equation is the matrix-vector equation for a vector $\nu \in \R^N$ given by 
\begin{equation}
\label{eq:general-balance}
\nu = ( \nu \wedge \mu ) P + \lambda,
\end{equation}
where $\mu = (\mu_i)_{i \in [N]}$ and $\lambda = (\lambda_i)_{i \in [N]}$. In~\eqref{eq:general-balance}
 and elsewhere we view vectors as column vectors when necessary. In the Jackson network context, the quantities $\lambda,\mu$, and $P$ represent
arrival rates of customers, service rates, and routing probabilities, respectively; see Section~\ref{sec:jackson} for a precise description.
Now we can state our first main result.
	
		\begin{theorem}
	\label{thm:main}
	Let $N \in \N$ and suppose that~\eqref{ass:positive-rates} holds.
	There exists a unique solution $\nu = (\nu_i)_{i \in [N]}$
	to the general traffic equation~\eqref{eq:general-balance}. Define $\rho_i := \nu_i / \mu_i$
	for every $i \in [N]$. 
	Then there is a unique 
	ordered partition $\Theta = (\theta_1, \ldots, \theta_n)$ of $[N+1]$,
	which we call the \emph{cloud partition}, 
	such that, with $\hrho_\theta (j)$ as defined at~\eqref{eq:rho-I-def},
	\begin{equation}
	\label{eq:rho-rho-hat}
	\text{for every } \theta \in \Theta^\star \text{ and all } j \in \theta^\circ, ~ \rho_j = \hrho_\theta (j) \in (0,1),
\end{equation}
	and $\rho_{\max\theta} \geq 1$ for every $\theta \in \Theta$ with $\max \theta \leq N$.
The following stability statements hold.	
\begin{thmenumi}[label=(\roman*)]
\item 
\label{thm:main-i}
 For every $\theta \in \Theta^\star$, take $A_\theta \subseteq \ZP^{k(\theta)}$, where $k(\theta) := \vert \theta \vert -1 \geq 1$. Then
\[ \lim_{t \to \infty} \frac{1}{t} \int_0^t \ind \left( \bigcap_{\theta \in \Theta^\star} \left\{ \eta_{\theta} (s) \in A_\theta \right\} \right) \ud s = 
\lim_{t \to \infty} \IP \left( \bigcap_{\theta \in \Theta^\star} \left\{ \eta_{\theta} (t) \in A_\theta \right\} \right)
 = \prod_{\theta \in \Theta^\star} \varpi_{\theta} (A_\theta) , \]
the first limit holding a.s.~and in $L^1$, where~$\varpi_\theta$ is defined by~\eqref{eq:limit-distribution-component}. 
Moreover, 
\begin{equation}
\label{eq:cloud-size}
 \lim_{t \to \infty} \IE R_\theta (t) = \sum_{j \in \theta^\circ} \frac{1}{1-\hrho_\theta (j) } , \text{ for every } \theta \in \Theta^\star. \end{equation}
\item 
\label{thm:main-ii} On the other hand, for $1 \leq \ell < r \leq n$, we have for 
every $i \in \theta_\ell$ and $j \in \theta_r$,
\begin{equation}
\label{eq:cloud-separation}
 \lim_{t \to \infty} \IP \left[ \vert X_j (t) - X_i (t) \vert  \leq B \right] = 0 , \text{ for every }
B \in \RP.
\end{equation}
\item 
\label{thm:main-iii}
For every $i \in [N+1]$, define $v_i$ by $v_i = \hv (\theta)$ where $i \in \theta \in \Theta$, and $\hv(\theta)$ is as defined at~\eqref{eq:hv-def}.
Then $-\infty <  v_1 \leq \cdots \leq v_n < \infty$, and
	\[ \lim_{t \to \infty} \frac{X_i (t)}{t} = v_i, \as, \text{ for every } i \in [N+1] .\]
	Moreover, $v_{i+1} - v_i  =  (\rho_i -1)^+ ( b_i + a_{i+1} )$ for all $i \in [N]$, where $x^+ := x \1 { x \geq 0}$, $x \in \R$.
	\end{thmenumi}
	\end{theorem}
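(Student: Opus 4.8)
The plan rests on the identification, to be made precise in Section~\ref{sec:jackson}, of the gap process $\eta$ with the queue-length process of a (possibly unstable) Jackson network of $N$ nodes in series, with nearest-neighbour routing matrix $P$, service rates $\mu$, and external arrival rates $\lambda$ as in~\eqref{eq:mu-def}--\eqref{eq:P-def}. One then imports and sharpens the theory of such networks due to Goodman and Massey~\cite{GM84}, exploiting the one-dimensional topology. The argument splits into an algebraic part --- existence, uniqueness and structure of the cloud partition --- and a probabilistic part --- the three stability statements.

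First I would handle the algebra. On $\R^N$ the map $x\mapsto(x\wedge\mu)P+\lambda$ is coordinatewise non-decreasing and concave; since all $b_i>0$ under~\eqref{ass:positive-rates}, every node of $P$ reaches a point of leakage by successive leftward steps, so $P^n\to\0$ and $I-P$ is invertible. Iterating the map from $\0$ therefore produces a non-decreasing sequence bounded above by the solution of $\nu=\nu P+\lambda$, hence convergent to a fixed point, and uniqueness is the Goodman--Massey monotonicity argument. With $\rho_i:=\nu_i/\mu_i$, define $\Theta$ to be the ordered partition of $[N+1]$ into the maximal runs of particles joined across gaps $j$ with $\rho_j<1$, so that $\rho_{\max\theta}\geq 1$ whenever $\max\theta\leq N$ by construction. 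The key lemma is that for each non-singleton $\theta=[\ell;m]$, the restriction of~\eqref{eq:general-balance} to the gaps in $\theta^\circ$, with the neighbouring saturated gaps substituted, coincides with the classical (untruncated) traffic equation of the stable finite line network of the $m-1$ queues $\ell,\dots,\ell+m-2$ fed externally at rate $a_\ell$ into queue $\ell$ and at rate $b_{\ell+m-1}$ into queue $\ell+m-2$; solving this birth--death-type system explicitly yields $\rho_j=\hrho_\theta(j)\in(0,1)$ for all $j\in\theta^\circ$, which is~\eqref{eq:rho-rho-hat}, and a short monotonicity argument establishes uniqueness of an ordered partition with properties~\eqref{eq:rho-rho-hat} and $\rho_{\max\theta}\geq 1$. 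This step is elementary but needs care with the combinatorics of which gaps saturate.

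Next, the probabilistic heart: asymptotic decoupling. Via stochastic-monotonicity (coupling) comparisons I would sandwich the true process between processes in which each unstable (``separating'') gap is held at a large fixed level, or fed at its limiting rate, and, adapting the Goodman--Massey fluid estimates, show that for every unstable gap $i$ one has $t^{-1}\eta_i(t)\to\nu_i-\mu_i=(\rho_i-1)\mu_i\geq 0$ a.s., while $\eta_i(t)\to\infty$ in probability (and spends an asymptotic time-density $1$ above any fixed level, a.s.). Since for $\ell<r$, $i\in\theta_\ell$, $j\in\theta_r$ we have $|X_j(t)-X_i(t)|\geq\eta_{\max\theta_\ell}(t)$, this gives~\ref{thm:main-ii}. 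Complementarily, while all separating gaps exceed a fixed threshold each non-singleton $\theta$ evolves as an autonomous, stable, finite line Jackson network of $m-1$ M/M/1 queues, whose stationary law is, by Jackson's product-form theorem (equivalently Burke's theorem applied along the line), precisely the product-geometric law $\varpi_\theta$ of~\eqref{eq:limit-distribution-component} with parameters $\hrho_\theta(j)$; the vanishing time-density and probability of the bad set, a regeneration argument, and the Markov ergodic theorem then give the a.s./$L^1$ Cesàro limit and the convergence in distribution of~\ref{thm:main-i}. Applying this to $R_\theta(t)=\sum_{j\in\theta^\circ}(\eta_j(t)+1)$, with the uniform integrability afforded by the geometric tails, yields~\eqref{eq:cloud-size}, since a geometric law of parameter $\hrho$ has mean $\hrho/(1-\hrho)$.

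Finally, the law of large numbers~\ref{thm:main-iii}. Write $X_i(t)=X_i(0)+\int_0^t(b_i\1{\eta_i(s)\geq 1}-a_i\1{\eta_{i-1}(s)\geq 1})\,\ud s+M_i(t)$, with the evident modification when $i\in\{1,N+1\}$ or $i$ is an extreme particle of its part, where $M_i$ is a mean-zero martingale with bounded jump rates, so $\IE[M_i(t)^2]=O(t)$ and $t^{-1}M_i(t)\to 0$ a.s. For $i$ in a non-singleton part the integrand is a bounded functional of $\eta_\theta$, and the ergodic theorem (using the decoupling just established) forces $t^{-1}\int_0^t(\cdots)\,\ud s$ to its $\varpi_\theta$-stationary mean, which a flow-balance computation in the stable sub-network evaluates to $\hv(\theta)$, the same for all $i\in\theta$; for a singleton the indicators are eventually $\equiv 1$ by~\ref{thm:main-ii} and one gets $b_i-a_i=\hv(\theta)$. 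Hence $t^{-1}X_i(t)\to v_i$ a.s. The identity $v_{i+1}-v_i=(\rho_i-1)^+\mu_i$ is then read off: within a part both sides vanish, while across parts $v_{i+1}-v_i=\lim_t t^{-1}\eta_i(t)=\nu_i-\mu_i$ by the fluid limit above, and monotonicity of the $v$'s together with $-\infty<v_1$, $v_n<\infty$ follows from this formula and from $\hv(\theta)=(1-\alpha(\theta))/\beta(\theta)$ with $\beta(\theta)>0$. I expect the main obstacle to be the decoupling of the third paragraph: the separating gaps are only asymptotically, not exactly, saturated, so one cannot simply quote Jackson's theorem; making the monotone squeeze of each cloud's internal process between genuinely autonomous stable networks tight enough --- via the Goodman--Massey fluid control of the unstable gaps --- to transfer the exact product-geometric limit (and not merely tightness) to the true process is the technical crux.
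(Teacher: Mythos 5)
Your proposal follows essentially the same route as the paper: identify $\eta$ with a Goodman--Massey Jackson network to get existence/uniqueness of the solution of~\eqref{eq:general-balance} and the sandwich coupling between stable bounding networks for the ergodic limits; a restriction/consistency lemma showing that each non-singleton part solves the untruncated traffic equation of its own line sub-network, solved explicitly to give $\hrho_\theta$; separation via $|X_j(t)-X_i(t)|\geq\eta_{\max\theta_\ell}(t)$; and the law of large numbers via the Poisson stochastic-integral representation, martingale control, and the Ces\`aro limits $1\wedge\rho_i$ of the blocking indicators, with the speed-difference identity coming from the traffic equation. One small slip: for a singleton part the indicators $\1{\eta_{i-1}(s)\geq 1}$, $\1{\eta_i(s)\geq 1}$ are \emph{not} eventually identically $1$ (cf.\ Example~\ref{ex:all-equal}, where adjacent singletons meet infinitely often); what you need, and what your own decoupling step supplies, is only that their time-averages tend to $1$ when $\rho\geq 1$, which still yields $v_i=b_i-a_i$.
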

	
	\begin{remarks}
	\phantomsection
\label{rems:main-theorem}
\begin{myenumi}[label=(\alph*)]
\setlength{\itemsep}{0pt plus 1pt}
\item
	Statement~\ref{thm:main-i} is our ergodicity property for the stable clouds; 
	note it has nothing to say about singletons in the cloud partition~$\Theta$. 
Statement~\ref{thm:main-ii} says that distances between particles in different clouds diverge in probability, or, in other words,
	the stable clouds given by $\Theta$ are maximal.  Statement~\ref{thm:main-iii} says that each stable cloud in $\Theta$ possesses an asymptotic \emph{speed}
	at which all particles in that cloud travel.
	\item
	If $\theta_\ell, \theta_r$ with $1 \leq \ell < r \leq \vert \Theta \vert$ are two distinct stable clouds, and if the strict inequality $\hv(\theta_\ell) < \hv (\theta_r)$
	holds for the corresponding speeds in part~\ref{thm:main-iii}, then $\Delta_{\ell,r} (t) := \min_{i \in \theta_r} X_i (t) -  \max_{i \in \theta_\ell} X_i (t)$ 
	satisfies $\lim_{t \to \infty} t^{-1} \Delta_{\ell,r} (t) = \hv(\theta_r) - \hv(\theta_\ell) >0$, a.s.,
	which is much stronger than the statement in~\ref{thm:main-ii}. However, if $\hv(\theta_\ell) = \hv(\theta_r)$, it may be the case that $\liminf_{t \to \infty} \Delta_{\ell,r} (t) < \infty$, a.s.:
see Example~\ref{ex:all-equal} below.
\end{myenumi}
\end{remarks}

Theorem~\ref{thm:main} describes the asymptotic behaviour of the particle system through
the cloud partition $\Theta$ and the formulas~\eqref{eq:hv-l-m} and~\eqref{eq:rho-I-def} for the $\hv$ and $\hrho$.
Partition $\Theta$ is characterized via the solution $\nu$ of the non-linear
system~\eqref{eq:general-balance}; those $i$ for which $\rho_i \geq 1$ mark boundaries between successive parts of $\Theta$.
While it is not, in principle, a difficult task for a computer to solve the system~\eqref{eq:general-balance},
we present below an algorithm for obtaining $\Theta$ via 
a sequence of comparisons involving only applications of the formula~\eqref{eq:hv-l-m},
without directly computing the solution to~\eqref{eq:general-balance}.

There is a closely related algorithm due to Goodman and Massey~\cite{GM84}, in the general Jackson network context.
In our setting we can exploit the linear structure 
to produce an algorithm that needs only the formula~\eqref{eq:hv-l-m}, involving elementary arithmetic (a similar simplification takes place for the algorithm of~\cite{GM84}).
 The structure of our algorithm (and Theorem~\ref{thm:block-merge} below from which it is derived) is of
additional interest, as it provides some intuition as to
how stable clouds are formed.
Roughly, the algorithm goes as follows: start from the ordered partition of all singletons
as candidate stable clouds, and successively merge any candidate stable clouds in which the intrinsic speeds, as computed 
by~\eqref{eq:hv-l-m}, are such that the speed of the candidate cloud to the left exceeds the speed of the candidate cloud to the right.
In the Jackson network context, the intuition-bearing
 quantities in the algorithm of~\cite{GM84} are
the net growth rates of the queues, which are \emph{differences} of our speeds; specifically, $v_{i+1} - v_i= (\nu_i - \mu_i)^+$ is the net growth rate of queue $i$ (see Lemma~\ref{lem:speeds-algebra} below).

\begin{theorem}
\label{thm:algorithm}
	Let $N \in \N$ and suppose that~\eqref{ass:positive-rates} holds.
Then Algorithm~\ref{alg:partition} below produces the
unique partition $\Theta$ featuring in Theorem~\ref{thm:main}.
\end{theorem}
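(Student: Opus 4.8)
The plan is to verify that Algorithm~\ref{alg:partition} --- which only ever fuses adjacent candidate clouds --- terminates and that its output satisfies the characterization~\eqref{eq:rho-rho-hat} of the cloud partition; by the uniqueness part of Theorem~\ref{thm:main} this forces the output to equal $\Theta$. Termination is immediate, since each elementary step strictly decreases the number of parts, so there are at most $N$ merges; the linear-time claim reduces to organising the comparisons efficiently, which I treat as routine bookkeeping.

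The algebraic input I would isolate first is a \emph{mediant property} of the speed functional $\hv$: if a discrete interval splits as $I = I_1 \sqcup I_2$ into a left part $I_1$ and a right part $I_2$, then $\hv(I)$ lies weakly between $\hv(I_1)$ and $\hv(I_2)$, attaining an endpoint only when $\hv(I_1)=\hv(I_2)$; this is proved from the explicit formulas~\eqref{eq:alpha-beta-def}--\eqref{eq:hv-l-m} together with the multiplicative and recursive behaviour of $\alpha$ and $\beta$ under concatenation. Its companion, read off directly from~\eqref{eq:rho-I-def}, is that $\hrho_I(j)$ is affine and strictly increasing in $\hv(I)$, with the positive prefix quantities $\alpha(\{\min I,\ldots,j\})$, $\beta(\{\min I,\ldots,j\})$ as coefficients; hence, for $j \in I^\circ$,
\[ \hrho_I(j) < 1 \iff \hv(\{\min I,\ldots,j\}) > \hv(I), \qquad \hrho_I(j)>0 \iff \hv(I) > -\alpha(\{\min I,\ldots,j\})/\beta(\{\min I,\ldots,j\}) , \]
with a symmetric statement for proper suffixes via the $a\leftrightarrow b$ reversal noted after~\eqref{ass:positive-rates}. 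These are precisely the ingredients that make a pool-adjacent-violators (PAVA) scheme well behaved, and they underlie the block-merging result Theorem~\ref{thm:block-merge} on which the algorithm rests.

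Correctness then follows the PAVA template. Let $\Xi=(\xi_1,\ldots,\xi_m)$ be the output; since the algorithm halts, its stopping rule gives $\hv(\xi_k)\leq\hv(\xi_{k+1})$ for every adjacent pair. I would then assemble from $\Xi$ a candidate vector $\nu$ and check the two parts of~\eqref{eq:rho-rho-hat}. (i) \emph{Internal stability:} each $\xi_k\in\Xi^\star$ was built from singletons by a chain of ``left speed exceeds right speed'' merges; an induction over this chain, using the mediant property and the concatenation identities for $\alpha,\beta$, shows that in any block so constructed every proper prefix is strictly faster than the block (and, dually, every proper suffix strictly slower) --- the standard fact that a PAVA block is prefix-dominated, transcribed to our setting --- which by the first displayed equivalence is exactly $\hrho_{\xi_k}(j)\in(0,1)$ for $j\in\xi_k^\circ$. (ii) \emph{Critical boundaries:} from the terminal speeds $\hv(\xi_k)$ and~\eqref{eq:rho-I-def} one can define $\nu$ so that it solves the traffic equation~\eqref{eq:general-balance} with $\nu_j/\mu_j=\hrho_{\xi_k}(j)$ for $j$ interior to each block, and one shows the stopping inequality $\hv(\xi_k)\leq\hv(\xi_{k+1})$ is equivalent to $\nu_{\max\xi_k}/\mu_{\max\xi_k}\geq1$ at each boundary. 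Then $\nu$ is the unique solution of~\eqref{eq:general-balance} from Theorem~\ref{thm:main}, and with this $\nu$ the partition $\Xi$ satisfies~\eqref{eq:rho-rho-hat}, whence $\Xi=\Theta$. Lastly I would record \emph{confluence}: by the mediant property a merge that is forced remains forced after any other merge, so the output is independent of the order of the admissible merges --- legitimising ``the'' output and also supporting the induction in~(i).

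I expect the main obstacle to be~(i): ruling out that greedy left-over-right merging ever overshoots and produces a block that fails internal stability. In classical PAVA this is unimodality of block values along the merge tree; here the arithmetic mean is replaced by the composition law of $(\alpha,\beta)$, so the analogue of convexity of partial sums must be checked by hand. A cleaner route I would attempt is a contradiction argument straight from~\eqref{eq:rho-rho-hat}: were some output block $\xi_k$ to fail internal stability, it could be split at a violating prefix, contradicting the stopping rule together with minimality of the merge sequence --- and making ``minimality'' precise in an order-independent way is the delicate point, which is exactly what the confluence observation supplies.
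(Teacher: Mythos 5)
Your skeleton is the right one and is the same as the paper's: maintain the invariant that every non-singleton block produced by the algorithm is a candidate stable cloud, use the stopping rule to get $\hv(\theta_1)\leq\cdots\leq\hv(\theta_n)$ and hence $\rho_{\max\theta_j}\geq 1$ at each internal boundary, and conclude via the uniqueness in Theorem~\ref{thm:main}. The genuine gap sits exactly where you predict it: the inductive step --- that merging two adjacent candidate stable clouds with $\hv(I_1)>\hv(I_2)$ again yields a candidate stable cloud --- is the entire content of Theorem~\ref{thm:block-merge}(i), and you do not prove it; you name two possible routes (prefix-domination via a mediant property, or a contradiction from minimality of the merge sequence) and concede that each has an unresolved ``delicate point''. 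Moreover, the mediant property as you state it is false under~\eqref{ass:positive-rates}: the concatenation identities are $\alpha(I)=\alpha(I_1)\alpha(I_2)$ and $\beta(I)=\beta(I_2)+\alpha(I_2)\beta(I_1)$, so if $a_i=0$ for all $i\in I_2$ then $\alpha(I_2)=0$ and $\hv(I)=\hv(I_2)$ exactly, i.e., $\hv(I)$ sits at an endpoint even when $\hv(I_1)\neq\hv(I_2)$, and the dual claim ``every proper suffix is strictly slower'' also fails. The prefix half survives and your route is fillable: writing $I=P\sqcup S$ one gets $\hv(I)=\bigl(\beta(S)\hv(S)+\alpha(S)\beta(P)\hv(P)\bigr)/\bigl(\beta(S)+\alpha(S)\beta(P)\bigr)$, whence $\hv(P)>\hv(I)\iff\hv(P)>\hv(S)$ unconditionally, and the awkward case of a prefix $I_1\cup J_2$ straddling the junction is handled by comparing it with the complementary suffix of $I_2$. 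But none of this is carried out in the proposal, and it is precisely the step you flag as the main obstacle.

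It is worth contrasting this with how the paper closes that gap, since the mechanism is genuinely different. Theorem~\ref{thm:block-merge} is proved by a trichotomy on the value $\rho^I_K$ at the junction $K=\max I_1$, exploiting existence and uniqueness of the solution to the reduced traffic equation~\eqref{eq:reduced-balance}: if $\rho^I_K\geq1$, Lemma~\ref{lem:reduced-system-consistency} shows $\rho^I$ restricts to $\rho^{I_1}$ and $\rho^{I_2}$, and the speed identity~\eqref{eq:speed-neighbours} forces $\hv(I_1)\leq\hv(I_2)$; if $\rho^I_K<1$, Lemma~\ref{lem:stable-extension} (replacing $b_K$ by $\rho^I_K b_K<b_K$ strictly decreases every $\hrho_{I_1}(j)$, together with its mirror image on $I_2$) gives $\rho^I_j<1$ on all of $I^\circ$ and $\hv(I_1)>\hv(I)>\hv(I_2)$. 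Since exactly one case occurs, these implications upgrade to equivalences, which simultaneously supply the merge step and the terminal boundary check $\rho_{\max\xi_k}\geq 1$ that your part~(ii) needs. So: either carry out the prefix-domination induction in full (including the degenerate $\alpha=0$ cases and a separate argument for $\hrho_I(j)>0$), or invoke Theorem~\ref{thm:block-merge}, at which point your argument collapses to the paper's.
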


\begin{algorithm}
\label{alg:partition}
The algorithm takes as input data $N \in \N$ and the parameters $a_i, b_i \in (0,\infty)$, $i \in [N+1]$,
and   outputs an ordered partition $\Theta$ of $[N+1]$.
\begin{enumerate}
	\item Initialize with~$\kappa =0$ and $\Theta^0 := ( \{1\}, \{2\}, \ldots, \{N+1\} )$,
the partition of singletons.
\item Given the ordered partition
  $\Theta^\kappa = (\theta_1^\kappa, \ldots, \theta_n^\kappa)$
with $n = \vert \Theta^\kappa \vert$, compute $\hv (\theta^\kappa_i)$ for every $1 \leq i \leq n$, using~\eqref{eq:hv-l-m}.
	\item If $\hv ( \theta^\kappa_1 ) \leq \hv (\theta^\kappa_2 ) \leq \cdots \leq \hv (\theta^\kappa_n )$, where $\vert \Theta^\kappa \vert = n$, then set $\Theta = \Theta^\kappa$ and {\sc stop}.
	\item Otherwise, perform the following update procedure to get $\Theta^{\kappa+1}$.
 Let $J := \{ j \in [n-1] : \hv ( \theta^\kappa_j ) > \hv (\theta^\kappa_{j+1} ) \}$, where $n = \vert \Theta^\kappa \vert \geq 2$. 
Choose some $j \in J$. Then define
\[ \theta^{\kappa+1}_i := \begin{cases} 
\theta^{\kappa}_i &\text{if } 1 \leq i \leq j-1, \\
\theta^{\kappa}_j \cup \theta^{\kappa}_{j+1} &\text{if } i = j, \\
\theta^{\kappa}_{i+1} &\text{if } j+1 \leq i \leq n-1, \end{cases} \]
and take $\Theta^{\kappa+1} = (\theta^{\kappa+1}_1, \ldots, \theta^{\kappa+1}_{n-1} )$.
Iterate $\kappa \mapsto \kappa+1$ and return to {\sc Step 2}.
\end{enumerate}
	\end{algorithm}

\begin{remarks}
	\phantomsection
\label{rems:algorithm}
\begin{myenumi}[label=(\alph*)]
\setlength{\itemsep}{0pt plus 1pt}
\item
If $\vert\Theta^\kappa\vert =1$ at any point, then {\sc Step 3} will terminate the algorithm; thus whenever {\sc Step 2} is executed, one has $n = \vert \Theta^\kappa \vert \geq 2$.
Similarly, the set $J$ in {\sc Step 4} will always be non-empty.
\item
The merger executed in {\sc Step 4} reduces by one the number of parts in the partition, so Algorithm~\ref{alg:partition}
will terminate in at most $N$ iterations.
\item
One can in fact perform the merger in {\sc Step 4} at \emph{every} $j \in J$, rather than picking just one; this follows from Theorem~\ref{thm:block-merge} below. However, for simplicity of presentation
we only perform a single pairwise merger per iteration in the description above.
\item The algorithm of Goodman and Massey~\cite[p.~863]{GM84} is similar, in that it also requires at most $N$ steps, while each step requires inverting a matrix, which, in our case, is tridiagonal
and so provides formulas comparable to~\eqref{eq:hv-l-m}; the sequence of cloud mergers in Algorithm~\ref{alg:partition} is a little more adapted to the linear structure of our setting.
\end{myenumi}
\end{remarks}

The proofs of Theorem~\ref{thm:main} and~\ref{thm:algorithm} are given in Section~\ref{sec:proofs} below.
Next we state several corollaries to these two results: the proofs of these also appear in Section~\ref{sec:proofs}.
The first two corollaries pertain to the extreme cases of a single stable cloud, and a system
	in which each particle constitutes its own stable cloud.
	
	\begin{corollary}
	\label{cor:singletons}
	Let $N \in \N$ and suppose that~\eqref{ass:positive-rates} holds.
	The partition $\Theta$ consists only of singletons if and only if 
	\begin{equation}
	\label{eq:singleton-condition}
	b_1 - a_1 \leq b_2 - a_2 \leq \cdots \leq b_{N+1} - a_{N+1} .\end{equation}
	In the case where~\eqref{eq:singleton-condition} holds, we have that
	$\lim_{t \to \infty} t^{-1} X_i (t) = b_i - a_i$, a.s., while, for every $i \neq j$,
	$\vert X_i (t) - X_j (t) \vert \to \infty$ in probability as $t \to \infty$.
	\end{corollary}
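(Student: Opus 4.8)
The plan is to read the statement directly off the algorithmic characterization of the cloud partition (Theorem~\ref{thm:algorithm} together with Algorithm~\ref{alg:partition}), and then to transfer the asymptotics from Theorem~\ref{thm:main}. The one slightly delicate point is the ``only if'' implication, where one has to exploit the structural fact that Algorithm~\ref{alg:partition} only ever \emph{merges} parts.

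First I would record the elementary fact that, for a singleton part $\{i\}$, formula~\eqref{eq:hv-l-m} gives $\hv(\{i\}) = \hv(i;1) = b_i - a_i$ (as already noted below~\eqref{eq:hv-def}). Since the unique ordered partition of $[N+1]$ into singletons is $\Theta^0 = (\{1\},\ldots,\{N+1\})$, the condition ``$\Theta$ consists only of singletons'' is equivalent to ``$\Theta = \Theta^0$'', equivalently to $\vert\Theta\vert = N+1$ (an ordered partition of $[N+1]$ into $N+1$ non-empty parts has all parts of size one, and conversely).

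Now comes the main step. By Theorem~\ref{thm:algorithm}, $\Theta$ is the output of Algorithm~\ref{alg:partition}, which is initialized at $\Theta^0$ with the largest possible number of parts, namely $N+1$, and in which every execution of Step~4 replaces an $n$-part partition by an $(n-1)$-part one. Hence $\Theta = \Theta^0$ if and only if the algorithm halts on its first pass through Step~3, which happens precisely when the speeds of the initial singletons are already sorted, i.e.\ when $\hv(\theta^0_1) \leq \cdots \leq \hv(\theta^0_{N+1})$; by the previous paragraph this is exactly~\eqref{eq:singleton-condition}. This establishes both directions of the equivalence at once. (Alternatively, the ``only if'' direction follows from the monotonicity $v_1 \leq \cdots \leq v_n$ of cloud speeds in Theorem~\ref{thm:main}\ref{thm:main-iii} applied when each $v_i = b_i - a_i$, but invoking the algorithm keeps the argument self-contained.)

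Finally, assume~\eqref{eq:singleton-condition}, so that every $\theta \in \Theta$ is a singleton $\{i\}$. Then Theorem~\ref{thm:main}\ref{thm:main-iii} gives $\lim_{t\to\infty} t^{-1} X_i(t) = v_i = \hv(\{i\}) = b_i - a_i$ a.s.\ for each $i \in [N+1]$; and for distinct $i \neq j$, say $i < j$, the clouds $\{i\}$ and $\{j\}$ are distinct and appear in that order in $\Theta$, so~\eqref{eq:cloud-separation} of Theorem~\ref{thm:main}\ref{thm:main-ii} yields $\IP[\,\vert X_i(t) - X_j(t)\vert \leq B\,] \to 0$ as $t \to \infty$ for every $B \in \RP$, which is the asserted convergence $\vert X_i(t) - X_j(t)\vert \to \infty$ in probability. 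No genuine obstacle arises beyond this bookkeeping; the one place requiring care is the structural observation, used above, that the algorithm performs only mergers, so that the all-singleton partition can be output only by immediate termination.
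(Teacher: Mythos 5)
Your proposal is correct and follows essentially the same route as the paper: the ``if'' direction and the limiting statements are handled exactly as in the paper (immediate termination of Algorithm~\ref{alg:partition} plus Theorem~\ref{thm:main}\ref{thm:main-ii}--\ref{thm:main-iii}). The only difference is cosmetic: for the ``only if'' direction the paper invokes the monotonicity $v_1 \leq \cdots \leq v_n$ of cloud speeds from Theorem~\ref{thm:main}\ref{thm:main-iii} (the alternative you mention parenthetically), whereas you use the equally valid observation that the algorithm only ever merges parts, so an all-singleton output forces immediate termination.
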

				
		Recall that $\hrho$ is defined by~\eqref{eq:rho-I-def}. From Theorem~\ref{thm:main},
		we have that $\Theta = ([N+1])$ consists of a single stable cloud if and only if $\rho_i = \hrho_{[N+1]} (i) < 1$ for all $i \in [N]$,
		where $\rho_i = \nu_i /\mu_i$ is defined in terms of the solution $\nu$ to the general traffic equation~\eqref{eq:general-balance}; see~\eqref{eq:rho-rho-hat}.
		Furthermore, if $\rho_i < 1$ for all $i$, then~\eqref{eq:general-balance} reduces to the linear equation $\nu = \nu P + \lambda$, which can be solved explicitly (see Section~\ref{sec:jackson} below),
to give the formula~\eqref{eq:rho-stable} below. Thus we will sometimes also, with a small abuse of notation, refer to~\eqref{eq:rho-stable} as a definition of $\rho_i$ in the stable case.
				
	\begin{corollary}
	\label{cor:stable}
	Let $N \in \N$ and suppose that~\eqref{ass:positive-rates} holds.
	The partition $\Theta$ has a single part if and only if $\rho_i < 1$ for all $i \in [N]$, where,  for $i \in [N]$, 
	\begin{align}
	\label{eq:rho-stable}
	\rho_i = \hrho_{[N+1]} (i) = 
 \frac{a_1\cdots a_i}{b_1\cdots b_i}
  +\left( \frac{1}{b_i}+\frac{a_i}{b_ib_{i-1}}+
  \cdots + \frac{a_i\cdots a_2}{b_i\cdots b_1}\right) \hv_{N+1}, 
\end{align}
and where $\hv_{m} := \hv (1; m)$ is given by the $\ell=1$ version of~\eqref{eq:hv-l-m}, i.e.,
\begin{equation}
\label{eq:v-stable} 
 \hv_m := \frac{1-
\displaystyle\frac{a_1\cdots a_{m}}{b_1\cdots b_{m}}}
{\displaystyle\frac{1}{b_{m}}
 +\frac{a_{m}}{b_{m}b_{m-1}}+
  \cdots + \frac{a_{m}\cdots a_2}{b_{m}\cdots b_1}}, \text{ for } 1 \leq m \leq N+1.
\end{equation}
Moreover, if $\rho_i < 1$ for all $i \in [N]$, then for all $z_1, \ldots, z_N \in \ZP$,
\begin{equation}
\label{eq:stable-limit}
\lim_{t \to \infty}  \IP \left( \bigcap_{i=1}^N \left\{ \eta_{i} (t) = z_i \right\} \right)
 = \prod_{i=1}^N \rho_i^{z_i} (1-\rho_i) , \end{equation}
and
 $\lim_{t \to \infty} t^{-1} X_i (t) = \hv_{N+1}$, a.s., for every $i \in [N+1]$, where $\hv_{N+1}$ is given by~\eqref{eq:v-stable}.
\end{corollary}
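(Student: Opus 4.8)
The plan is to reduce everything to Theorem~\ref{thm:main}, using in addition the fact established in Section~\ref{sec:jackson} that the linear balance equation $\nu = \nu P + \lambda$ has a unique solution, and that dividing its $i$th coordinate by $\mu_i$ produces exactly the right-hand side of~\eqref{eq:rho-stable}.

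First I would settle the biconditional. If $\Theta$ has a single part then, by the partition property and since $N \geq 1$, that part is $[N+1]$, which has $\vert [N+1] \vert = N+1 \geq 2$; hence $\Theta^\star = ([N+1])$ and $[N+1]^\circ = [N]$, so~\eqref{eq:rho-rho-hat} gives $\rho_i = \hrho_{[N+1]}(i) \in (0,1)$ for every $i \in [N]$, and substituting $\ell = 1$, $m = N+1$ into~\eqref{eq:alpha-beta-def}, \eqref{eq:hv-l-m}, \eqref{eq:rho-I-def} and unwinding the nested $a/b$ products shows that $\hrho_{[N+1]}(i)$ and $\hv(1;N+1)$ equal the explicit expressions in~\eqref{eq:rho-stable} and~\eqref{eq:v-stable}. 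Conversely, suppose $\rho_i < 1$ for all $i \in [N]$, with $\rho_i$ read off~\eqref{eq:rho-stable}, i.e., $\hrho_{[N+1]}(i) < 1$ for every $i$. Let $\tilde\nu \in \R^N$ be the vector with $\tilde\nu_i := \mu_i \hrho_{[N+1]}(i)$; by the Section~\ref{sec:jackson} computation this is the unique solution of $\nu = \nu P + \lambda$, and since $\tilde\nu_i < \mu_i$ for every $i$ we have $\tilde\nu \wedge \mu = \tilde\nu$, so $\tilde\nu$ also solves the general traffic equation~\eqref{eq:general-balance}. By the uniqueness assertion in Theorem~\ref{thm:main}, $\tilde\nu$ is the solution $\nu$ appearing there, so $\rho_i = \tilde\nu_i / \mu_i = \hrho_{[N+1]}(i) < 1$ for all $i \in [N]$; since Theorem~\ref{thm:main} also gives $\rho_{\max\theta} \geq 1$ for every $\theta \in \Theta$ with $\max\theta \leq N$, no part of $\Theta$ can have maximal element at most $N$, which forces $\Theta = ([N+1])$.

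Next, assuming $\Theta = ([N+1])$, the ``moreover'' statements follow directly. Here $\Theta^\star$ is the single cloud $\theta = [N+1]$ with $\theta^\circ = [N]$ and $\eta_\theta(t) = (\eta_1(t), \dots, \eta_N(t))$, so taking $A_\theta = \{(z_1, \dots, z_N)\}$ in part~\ref{thm:main-i} of Theorem~\ref{thm:main} gives $\lim_{t \to \infty} \IP(\bigcap_{i=1}^N \{\eta_i(t) = z_i\}) = \varpi_{[N+1]}(z_1, \dots, z_N)$, which by the definition~\eqref{eq:limit-distribution-component} and the identity $\rho_i = \hrho_{[N+1]}(i)$ established above equals $\prod_{i=1}^N \rho_i^{z_i}(1-\rho_i)$, i.e.,~\eqref{eq:stable-limit}. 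Likewise, part~\ref{thm:main-iii} of Theorem~\ref{thm:main} yields $v_i = \hv([N+1]) = \hv(1; N+1) = \hv_{N+1}$ for every $i \in [N+1]$, together with the a.s.\ convergence $X_i(t)/t \to v_i$.

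The only step that is not pure bookkeeping on top of Theorem~\ref{thm:main} is the input from Section~\ref{sec:jackson} used above: verifying that the explicitly-defined vector $(\mu_i \hrho_{[N+1]}(i))_{i \in [N]}$ solves $\nu = \nu P + \lambda$ — equivalently, that~\eqref{eq:rho-stable}--\eqref{eq:v-stable} are the closed form of $\lambda(I-P)^{-1}$ rescaled by $\mu$. I expect this to be the main computational obstacle; it is a finite calculation that exploits the tridiagonal structure of $P$ in~\eqref{eq:P-def} and the telescoping of the products $a_\ell \cdots a_u / (b_\ell \cdots b_u)$, and it should be carried out once and reused.
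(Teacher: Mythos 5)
Your proposal is correct and follows essentially the same route as the paper: the paper's proof simply combines Theorem~\ref{thm:main} with Proposition~\ref{prop:stable-Jackson}, and your argument reproduces exactly that combination (your converse direction, passing from a sub-unit solution of the linear system to the general traffic equation by uniqueness, is the content of Proposition~\ref{prop:stable-Jackson}\ref{prop:stable-Jackson-i}). The ``computational obstacle'' you flag at the end --- that $(\mu_i\hrho_{[N+1]}(i))_{i\in[N]}$ is the closed form of $\lambda(I-P)^{-1}$ --- is precisely Proposition~\ref{prop:stable-Jackson}\ref{prop:stable-Jackson-ii}, proved in the paper by exhibiting the two-dimensional solution space of the recurrence~\eqref{eq:balance-rho-stable} via the gambler's-ruin basis, so nothing further is needed.
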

\begin{remarks}
	\phantomsection
\label{rems:stable}
\begin{myenumi}[label=(\alph*)]
\setlength{\itemsep}{0pt plus 1pt}
\item 
A compact, but less explicit, expression of the stability condition in Corollary~\ref{cor:stable} is as $\lambda ( I - P)^{-1} < \mu$, understood componentwise, where~$\lambda, \mu, P$ are given by~\eqref{eq:mu-def}--\eqref{eq:P-def}, and $I-P$ is an invertible tridiagonal matrix;
this is the classical Jackson network stability condition translated to our model: see Proposition~\ref{prop:stable-Jackson} below.
\item
Here is a more intuitive expression of the stability condition in Corollary~\ref{cor:stable}, in the spirit of Algorithm~\ref{alg:partition}. 
From~\eqref{eq:rho-stable}, \eqref{eq:rho-I-def}, and~\eqref{eq:v-stable}, we have that $\hv_i = \hv (1;i) =  (1 - \alpha (1;i))/\beta(1;i)$ and 
$\rho_i = \rho_{[N+1]} (i) = \alpha (1 ; i) + \beta (1; i) \hv_{N+1}$ with the notation at~\eqref{eq:alpha-beta-def}. Hence
\begin{equation}
\label{eq:speeds-condition}
\rho_i < 1 \text{ if and only if } \hv_i > \hv_{N+1} ,
\end{equation}
which expresses the stability condition in terms of the 
putative intrinsic speed associated with the sub-system of particles $\{1,2,\ldots,i\}$
compared to the putative intrinsic speed of the whole system. See Section~\ref{sec:discussion} for related results in a diffusion context.
\end{myenumi}
\end{remarks}

In the case of a single stable cloud, Theorem~\ref{thm:clt} below gives a central limit theorem
to accompany Corollary~\ref{cor:stable}. First we present some illustrative examples.
		
	\begin{example}[Constant drifts]
	\label{ex:all-equal}
	If $b_i - a_i \equiv u \in \R$ for all $i \in [N+1]$,
	then~\eqref{eq:singleton-condition} clearly holds,
	and so Corollary~\ref{cor:singletons} applies, and each particle is a singleton cloud
	with $\lim_{t \to \infty} t^{-1} X_i(t) = u$, a.s.
	Moreover, for every $i \in [N]$, the gap $X_{i+1} (t) - X_i (t) = 1 + \eta_i (t)$
	satisfies $\lim_{h \to 0} h^{-1} \IE [ \eta_i (t +  h) - \eta_i (t) \mid \cF_t ] \leq 0$ on $\{ \eta_i (t) \geq 1 \}$,
	where $\cF_t = \sigma ( X(s), s \leq t )$.
	It follows from this, or the corresponding supermartingale condition for the discrete-time embedded Markov chain,
	that  $\liminf_{t \to \infty} ( X_{i+1} (t) - X_i (t) ) = 1$, a.s.~(combine, for example, Thm.~3.5.8 and Lem.~3.6.5 from~\cite{mpw}). Thus although the
	particles form singleton clouds, nevertheless every pair of consecutive particles meets infinitely often (particles meeting means that they occupy adjacent sites).
	One can ask whether, say, three or four consecutive particles meet infinitely often or not; we do not address such `recurrence or transience' questions systematically here, but cover some cases in Example~\ref{ex:all-equal2} below.
	\end{example}
	
	\begin{example}[Totally asymmetric case]
\label{ex:a-zero}
The assumption~\eqref{ass:positive-rates} permits $a_i= 0$; here is one such example.
Suppose that $a_i =0$ for all $i \in [N+1]$, but $b_i >0$ for all $i \in [N+1]$.
This is the \emph{totally asymmetric} case in which particles can jump only to the right,
but may do so at different rates: see e.g.~\cite{rs} and references therein.
Corollary~\ref{cor:stable} shows that 
the system is stable if and only if $b_i > b_{N+1}$ for all $i \in [N]$ and, moreover, if this latter condition is satisfied,
$\lim_{t \to \infty} t^{-1} X_i (t) = \hv_{N+1} = b_{N+1}$ for all $i \in [N+1]$.
\end{example}
	
	\begin{example}[Small systems]
	\label{ex:small}
	Suppose that $N=1$ (two particles). 
	Corollary~\ref{cor:stable} implies that the system is stable if and only if $b_1 - a_1 > b_2 - a_2$;
	if stable, then $\lim_{t \to \infty} t^{-1} X_1 (t) = \lim_{t \to \infty} t^{-1} X_2 (t) = \hv_2$, a.s.,
	where~\eqref{eq:v-stable}  gives $\hv_2 = (b_1 b_2 - a_1 a_2 )/ (a_2 + b_1)$.
	For a quantitative central limit theorem describing the fluctuations around this strong law, see Theorem~\ref{thm:two-particle-clt} below.
	On the other hand, if $b_1 - a_1 \leq b_2 - a_2$ then
	Corollary~\ref{cor:singletons} applies, and $\{1\}$ and $\{2\}$ are separate stable clouds.
	
	Suppose that $N=2$ (three particles). Consider the four inequalities
	\begin{align}
	\label{eq:3-particles-1} b_1 - a_1 & > b_2 - a_2 ; \\
	\label{eq:3-particles-2} b_2 - a_2 & > b_3 - a_3 ; \\
		\label{eq:3-particles-rho1}  b_1 b_2 + b_1 a_3 + a_2 a_3  & > a_1 b_2 + a_1 a_3 + b_2 b_3 ; \\
			\label{eq:3-particles-rho2} b_1 b_2 + b_1 a_3 + a_2 a_3 & > a_1 a_2 + b_1 b_3 + a_2 b_3 .
\end{align}
		In this case, the complete classification of the system is as follows.
		\begin{itemize}
		\item[(i)] If~\eqref{eq:3-particles-rho1} and~\eqref{eq:3-particles-rho2} both hold, then $\{1,2,3\}$ is a stable cloud.
		\item[(ii)] If~\eqref{eq:3-particles-1} holds but~\eqref{eq:3-particles-rho2} fails, then the stable clouds are $\{1,2\}$ and $\{3\}$.
		\item[(iii)] If~\eqref{eq:3-particles-2} holds but~\eqref{eq:3-particles-rho1} fails, then the stable clouds are $\{1\}$ and $\{2,3\}$.
		\item[(iv)] If~\eqref{eq:3-particles-1} and~\eqref{eq:3-particles-2} both fail, then the stable clouds are $\{ 1\}$, $\{2\}$, $\{3\}$.
		\end{itemize}
		This classification is exhaustive, as can be seen from the following implications:
		\begin{itemize}
		\item[(a)] If~\eqref{eq:3-particles-1}
		and~\eqref{eq:3-particles-2} both hold, then so do~\eqref{eq:3-particles-rho1}
		and~\eqref{eq:3-particles-rho2}.
		\item[(b)] If~\eqref{eq:3-particles-1} and~\eqref{eq:3-particles-rho2} hold but~\eqref{eq:3-particles-2} does not, then~\eqref{eq:3-particles-rho1} holds.
				\item[(c)] If~\eqref{eq:3-particles-2} and~\eqref{eq:3-particles-rho1} hold but~\eqref{eq:3-particles-1} does not, then~\eqref{eq:3-particles-rho2} holds.
				\end{itemize}
				To verify~(a), note for example that~\eqref{eq:3-particles-1} and~\eqref{eq:3-particles-2} imply $(b_1-a_1) (b_2+a_3) > b_2 (b_3-a_3) + a_3 (b_2-a_2) = b_2 b_3 - a_2 a_3$, which is~\eqref{eq:3-particles-rho1}.
				For~(b), note that~\eqref{eq:3-particles-1} implies $a_1 a_2 + b_1 b_3 + a_2 b_3 > a_1 ( a_2 + b_3 ) + b_2 b_3 \geq a_1 (a_3 + b_2) + b_2 b_3$ if~\eqref{eq:3-particles-2} fails,
and				then~\eqref{eq:3-particles-rho2} implies~\eqref{eq:3-particles-rho1}; (c) is similar.

To see how the classification laid out here follows from the stated results, note that, with $\rho_i$ as defined at~\eqref{eq:rho-stable}, inequalities~\eqref{eq:3-particles-rho1} and~\eqref{eq:3-particles-rho2}
		are equivalent to $\rho_1 < 1$ and $\rho_2 < 1$, respectively. Thus Corollary~\ref{cor:stable} yields~(i). On the other hand, Corollary~\ref{cor:singletons} gives~(iv).
		For (ii), notice that~\eqref{eq:3-particles-1} implies that a valid first step for Algorithm~\ref{alg:partition} is to merge $\{1\}$ and $\{2\}$, but then failure of~\eqref{eq:3-particles-rho2} 
		is equivalent to $\hv ( \{1,2\} ) \leq \hv (\{3\})$, where, by~\eqref{eq:hv-l-m},
		$\hv ( \{1,2\} ) = (b_1 b_2 - a_1 a_2)/(a_2 + b_1)$ and $\hv ( \{3\} ) = b_3 - a_3$. Similarly for~(iii). Finally, note that in the stable case~(i), the limiting speed is
	\begin{equation}
	\label{eq:3-particles-speed}
	\hv_3 = \frac{b_1 b_2 b_3 - a_1 a_2 a_3}{b_1 b_2 + b_1 a_3 + a_2 a_3 } ,\end{equation}
			by~\eqref{eq:v-stable}.
			\end{example}
		
	Here is one more corollary,
	which gives a necessary and sufficient condition
	for all the speeds $v_i$ in Theorem~\ref{thm:main}\ref{thm:main-iii} to be positive.
	
	\begin{corollary}
	\label{cor:one-direction}
	Let $N \in \N$ and suppose that~\eqref{ass:positive-rates} holds.
	The following are equivalent.
	\begin{itemize}
	\item[(i)] $\min_{i \in [N+1]} v_i > 0$.
	\item[(ii)] For every $k \in [N+1]$, $\prod_{i=1}^k a_i < \prod_{i=1}^k b_i$.
	\end{itemize}
	\end{corollary}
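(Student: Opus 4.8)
The plan is to reduce the statement to properties of the speeds $\hv(\ell;m)$ and the cloud partition $\Theta$ furnished by Theorem~\ref{thm:main}, and in particular to Theorem~\ref{thm:main}\ref{thm:main-iii}, which tells us that $v_1 \leq \cdots \leq v_{N+1}$ and that $v_i = \hv(\theta)$ for the cloud $\theta \ni i$. Since the speeds are non-decreasing, condition~(i) is equivalent to $v_1 > 0$, i.e.\ $\hv(\theta_1) > 0$ where $\theta_1 = [1;m_1]$ is the leftmost cloud. From the formula~\eqref{eq:hv-l-m} with $\ell=1$, and since all $b_u > 0$, the denominator $\beta(1;m_1)$ is strictly positive, so $\hv(1;m_1) > 0$ is equivalent to $\alpha(1;m_1) < 1$, that is $\prod_{i=1}^{m_1} a_i < \prod_{i=1}^{m_1} b_i$. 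So the content of the corollary is to upgrade this single inequality (for $k = m_1$) to the full family of $N+1$ inequalities in~(ii), and conversely.

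First I would prove (ii)$\implies$(i). Here the cleanest route is to invoke the algorithmic description of $\Theta$ (Theorem~\ref{thm:algorithm} and Algorithm~\ref{alg:partition}): the leftmost cloud $\theta_1$ is built by successive merges starting from $\{1\}$, and one shows by induction on the merge steps that if $\prod_{i=1}^k a_i < \prod_{i=1}^k b_i$ for all $k \le m$, then $\hv(1;m) > 0$. Indeed $\hv(1;1) = b_1 - a_1 > 0$ by the $k=1$ case, and a merge of a left block $[1;j]$ with a right block $[j+1;m-j]$ is only performed when $\hv(1;j) > \hv(j+1;m-j)$; I would show that the merged speed $\hv(1;m)$ is a weighted ``mediant''-type combination of the two block speeds — more precisely, writing $\hv(1;m) = (1-\alpha(1;m))/\beta(1;m)$ and using the multiplicativity $\alpha(1;m) = \alpha(1;j)\alpha(j+1;m-j)$ together with the recursive structure of $\beta$, one sees $\hv(1;m)$ lies between $\hv(j+1;m-j)$ and $\hv(1;j)$, hence is positive as soon as \emph{one} of the two constituent speeds is positive, which by the induction hypothesis is the case. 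Since $\theta_1 = [1;m_1]$ for some $m_1$, this gives $v_1 = \hv(1;m_1) > 0$, hence (i).

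For (i)$\implies$(ii) I would argue by contradiction: suppose $v_1>0$ but $\prod_{i=1}^k a_i \geq \prod_{i=1}^k b_i$ for some $k \in [N+1]$; take the smallest such $k$, so $\alpha(1;k) \ge 1$. If $k$ lies in the interior of some cloud $\theta = [\ell;m]$ with $\ell = 1$ (so $\theta = \theta_1$ and $k \in \theta_1^\circ$), then by~\eqref{eq:rho-rho-hat} we have $\rho_k = \hrho_{\theta_1}(k) = \alpha(1;k) + \beta(1;k)\hv(\theta_1) \geq 1 + \beta(1;k) v_1 > 1$, contradicting $\rho_k < 1$ from~\eqref{eq:rho-rho-hat}; if instead $k = \max\theta_1 = m_1 \le N$, then (as computed above) $v_1 = \hv(1;m_1) \le 0$ since $\alpha(1;m_1)\ge 1$, again a contradiction; and if $\theta_1 = [N+1]$ with $k = N+1$, then $v_1 = \hv(1;N+1) = (1-\alpha(1;N+1))/\beta(1;N+1) \le 0$, a contradiction. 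Finally, if $k$ lies in a cloud $\theta_r$ with $r \ge 2$ (i.e.\ $k > m_1$), one reduces to the case of the subsystem consisting of clouds $\theta_1,\dots,\theta_r$: by the monotonicity $v_1 \le \cdots$ it suffices to note that the restriction of the dynamics, or equivalently the analogous formulas applied to the interval $[1;\max\theta_r]$, again force $\alpha(1;k) \ge 1$ to produce some $\rho_j \ge 1$ or non-positive speed within $[1;k]$; since $v_1 \le v_j$ for all $j \le k$, this in fact already contradicts $v_1 > 0$ via the $k' \le m_1$ sub-case handled above once one observes that the minimal offending index $k$ automatically satisfies $k \le m_1$ (because $\alpha(1;\cdot)$ first reaches $1$ no later than the right end of $\theta_1$, as the cloud boundary condition $\rho_{m_1} \ge 1$ together with $\rho_{m_1} = \hrho_{\theta_1}(m_1-\ell+1)$-type identities and $v_1 > 0$ forces $\alpha(1;m_1) \ge 1$ if the system is not a single cloud, and forces $\alpha(1;N+1)<1$ otherwise).

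The main obstacle I anticipate is the bookkeeping in the last paragraph: cleanly showing that the \emph{first} index $k$ with $\prod_{i\le k} a_i \ge \prod_{i\le k} b_i$ necessarily falls at or before the right endpoint of the leftmost cloud $\theta_1$, so that the contradiction can always be localized to $\theta_1$. This is where I would lean hardest on the characterization~\eqref{eq:rho-rho-hat} (that $\rho_j = \hrho_{\theta}(j) < 1$ strictly inside a cloud and $\rho_{\max\theta} \ge 1$ at its right boundary) and on the ``mediant'' lemma for how $\hv$ of a merged block interpolates its constituents; packaging that interpolation fact as a short standalone lemma, and combining it with the positivity of all $\beta(\cdot)$, should make both directions go through without heavy computation.
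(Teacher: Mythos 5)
Your reduction of (i) to $\hv(\theta_1)>0$, and hence (since $\beta(1;m_1)>0$) to $\alpha(1;m_1)<1$, is correct and matches the paper; so is your treatment of (i)$\implies$(ii) when the minimal offending index $k$ satisfies $k\leq m_1=\max\theta_1$ (your use of $\rho_k=\alpha(1;k)+\beta(1;k)\hv(\theta_1)>1$ for $k\in\theta_1^\circ$ is a clean alternative to the paper's iterated application of Theorem~\ref{thm:block-merge}). But there is a genuine gap in the remaining case, which you yourself flag as the main obstacle: when the first $k$ with $\prod_{i\leq k}a_i\geq\prod_{i\leq k}b_i$ lies strictly beyond $\theta_1$. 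Your claim that ``the minimal offending index $k$ automatically satisfies $k\leq m_1$'' is not established, and the parenthetical justification is incoherent: you assert that the boundary condition $\rho_{m_1}\geq 1$ ``forces $\alpha(1;m_1)\geq1$ if the system is not a single cloud,'' which directly contradicts $v_1=\hv(1;m_1)>0\iff\alpha(1;m_1)<1$ (and $\hrho_{\theta_1}$ is not even defined at $j=\max\theta_1$, so there is no identity $\rho_{m_1}=\hrho_{\theta_1}(m_1)$ to invoke). The paper closes this case with a different idea that your sketch does not contain: if $r+1>m_1$ is the \emph{first} failure of (ii), then dividing the inequality at $r+1$ by the strict inequalities at $1,\ldots,r$ gives $a_{r+1}\cdots a_j> b_{r+1}\cdots b_j$ for every $j\geq2$, so $\alpha(I)>1$ and hence $\hv(I)<0$ for \emph{every} discrete interval $I$ with $\max I=r+1$ and $\min I\geq2$; since the cloud containing $r+1$ is such an interval, its speed is negative, contradicting $\hv(\theta_1)>0$ and the left-to-right monotonicity of cloud speeds. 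Some argument of this kind (controlling suffix products, not just prefix products) is indispensable, because the hypothesis (i) only directly constrains $\alpha(1;\cdot)$ on $\theta_1$.

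A smaller point on your (ii)$\implies$(i) paragraph: the ``mediant'' step asserting that the merged speed ``is positive as soon as one of the two constituent speeds is positive'' is false as stated --- Theorem~\ref{thm:block-merge}(i) only places $\hv(I)$ strictly between $\hv(I_1)$ and $\hv(I_2)$, and a number between a positive and a negative number need not be positive. Fortunately that whole induction is unnecessary: as you note at the outset, (ii) applied at $k=m_1$ already gives $\alpha(1;m_1)<1$ and hence $v_1=\hv(1;m_1)>0$ directly from~\eqref{eq:hv-l-m}, which is exactly how the paper argues this direction.
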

	
We next turn to fluctuations. For simplicity, we assume that the system is stable, i.e., the $\rho_i$
given by~\eqref{eq:rho-stable} satisfy $\rho_i <1$ for all $i \in[N]$. 
The following central limit theorem  is a consequence of  
results of Anantharam and Konstantopoulos~\cite{ak}
for counting processes of transitions in continuous-time Markov chains.
Let $\cN(\mu,\sigma^2)$ denote the normal distribution on $\R$ with mean $\mu \in \R$ and variance $\sigma^2 \in (0,\infty)$.
	
		\begin{theorem}
	\label{thm:clt}
	Let $N \in \N$ and suppose that~\eqref{ass:positive-rates} holds.
	Suppose that the $\rho_i$
given by~\eqref{eq:rho-stable} satisfy $\rho_i <1$ for all $i \in[N]$.
Let $\hv_{N+1}$ be given by~\eqref{eq:v-stable}.
Then there exists $\sigma^2 \in (0,\infty)$ such that, for all $w \in \R$ and $z_1, \ldots, z_N \in \ZP$,
\[
\lim_{t \to \infty} \IP \left[ \frac{X_1(t) - \hv_{N+1} t}{\sqrt{ \sigma^2 t}} \leq  w ,\, \eta_1 (t) = z_1, \, \ldots , \, \eta_N (t) = z_N \right] = \Phi (w) \prod_{i =1}^N \rho_i^{z_i} (1-\rho_i ) ,
\]
where $\Phi$ is the cumulative distribution function of $\cN(0,1)$.
In particular, if $Z \sim \cN (0,  \sigma^2)$, then, as $t \to \infty$,
\[ \left( \frac{X_i(t) - \hv_{N+1} t}{\sqrt{t}} \right)_{i \in [N+1]} \tod (Z , \ldots ,Z). \]
	\end{theorem}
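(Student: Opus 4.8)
The plan is to prove the central limit theorem for the leftmost coordinate $X_1(t)$ and then transfer to all the others, using the a.s.\ identity $X_i(t)-X_1(t)=(i-1)+\sum_{j=1}^{i-1}\eta_j(t)$, whose right-hand side is tight by Corollary~\ref{cor:stable}. For the reduction, observe that particle~$1$ is never blocked on its left, so the generator of $\xi=(X_1,\eta)$ acts on the coordinate $X_1$ by $\cL X_1 = g$ with $g(\eta):= b_1\1{\eta_1\geq 1}-a_1$ depending only on~$\eta$; under the product-geometric stationary law $\pi$ of~$\eta$ from Corollary~\ref{cor:stable} one has $\pi(g)=b_1\rho_1-a_1=\hv_{N+1}$, the last equality being the $i=1$ case of~\eqref{eq:rho-stable}. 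Set $f:=g-\pi(g)$, so $\pi(f)=0$ and $f$ is bounded. I would next establish that $\eta$ is $V$-uniformly (geometrically) ergodic on $\ZP^N$ with a Lyapunov function $V(\eta)=\sum_j c_j^{\eta_j}$ for constants $c_j\in(1,\rho_j^{-1/2})$: this is the standard Foster--Lyapunov stability argument for Jackson networks in the stable regime $\rho_i<1$ (cf.\ Section~\ref{sec:jackson}), and the bound $c_j<\rho_j^{-1/2}$ ensures $\pi(V^2)<\infty$ (and, after shrinking the $c_j$ slightly, $\pi(V^{2+\delta})<\infty$). Then the Poisson equation $\cL\psi=-f$ admits a solution with $|\psi|\leq CV$, so the function $h(x_1,\eta):=x_1+\psi(\eta)$ satisfies $\cL h\equiv\hv_{N+1}$, and Dynkin's formula produces a martingale $\widetilde M_t:=h(\xi(t))-h(\xi(0))-\hv_{N+1}t$ with
\[
X_1(t)-\hv_{N+1}t = X_1(0)+\psi(\eta(0))-\psi(\eta(t))+\widetilde M_t .
\]
Since $X_1(0),\psi(\eta(0))$ are deterministic and $\psi(\eta(t))$ is tight (as $\eta(t)\to\pi$ and $\pi(V)<\infty$), it suffices to prove a CLT for $t^{-1/2}\widetilde M_t$.

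For this I would apply the martingale functional CLT. The predictable quadratic variation is $\langle\widetilde M\rangle_t=\int_0^t\Gamma(\eta(s))\,\ud s$ for an explicit $\Gamma\geq 0$ (sum over transitions of rate times squared jump of $h$), which depends only on~$\eta$; boundedness of the rates and $\pi(V^2)<\infty$ give $\pi(\Gamma)<\infty$, so the ergodic theorem for $\eta$ yields $t^{-1}\langle\widetilde M\rangle_t\to\sigma^2:=\pi(\Gamma)$ a.s. Positivity $\sigma^2>0$ follows because $\Gamma\not\equiv 0$ on $\ZP^N$: the jump in which the rightmost particle moves right (rate $b_{N+1}>0$) changes $h$ by $\psi(\eta+e_N)-\psi(\eta)$ while leaving $X_1$ fixed, and $\Gamma\equiv 0$ would force $\psi$ to be affine with a slope pattern incompatible with the jumps that do move $X_1$; a short case check (including $a_1=0$) rules this out. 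The Lindeberg condition is verified by a truncation argument using $\pi(V^{2+\delta})<\infty$. Hence $t^{-1/2}\widetilde M_t\tod\cN(0,\sigma^2)$, and therefore $(\sigma^2 t)^{-1/2}(X_1(t)-\hv_{N+1}t)\tod\cN(0,1)$.

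It remains to upgrade to the joint statement with $\eta(t)=(z_1,\ldots,z_N)$. I would use asymptotic independence: fix $w\in\R$ and split $[0,t]$ at $s:=t-t^{1/3}$. The increment $X_1(t)-X_1(s)$ is bounded by a $\mathrm{Poisson}((a_1+b_1)t^{1/3})$ number of jumps, so $t^{-1/2}(X_1(t)-X_1(s))\to 0$ in $L^1$; conditioning on $\cF_s$ and using $V$-uniform ergodicity, $\IP[\eta(t)=z\mid\cF_s]=\pi(z)+O(\rho^{t-s}V(\eta(s)))$ with $\sup_s\IE V(\eta(s))<\infty$, whence
\[
\IE\!\left[\re^{\,\mathrm{i} w t^{-1/2}(X_1(t)-\hv_{N+1}t)}\,\1{\eta(t)=z}\right]
=\pi(z)\,\IE\!\left[\re^{\,\mathrm{i} w t^{-1/2}(X_1(s)-\hv_{N+1}s)}\right]+o(1)
\to \pi(z)\,\IE[\re^{\,\mathrm{i} w Z}],
\]
with $Z\sim\cN(0,\sigma^2)$ (using $s/t\to 1$ and the marginal CLT at time~$s$). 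By L\'evy's continuity theorem this is exactly the claimed joint limit, which in the normalisation of the theorem reads $\Phi(w)\prod_{i}\rho_i^{z_i}(1-\rho_i)$. Finally, $X_i(t)-\hv_{N+1}t=(X_1(t)-\hv_{N+1}t)+O(1)$ in probability by the bounded-differences identity, so dividing by $\sqrt t$ all coordinates converge jointly to the same limit~$Z$.

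The main obstacle I anticipate is the Markov-chain input: establishing $V$-uniform ergodicity of the Jackson network $\eta$ with a Lyapunov function whose exponents are small enough that $\pi(V^{2+\delta})<\infty$ yet large enough to dominate the drift, so that the Foster--Lyapunov estimate, the $V$-bound on the Poisson solution, and the quadratic-variation and Lindeberg estimates all fit together. Once this is in place, the martingale CLT, the identification and positivity of $\sigma^2$, the asymptotic independence, and the transfer to all coordinates are routine.
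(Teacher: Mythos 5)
Your proposal is correct in outline, but it follows a genuinely different route from the paper. The paper's proof is regenerative: it cuts the trajectory at the successive visits of $\eta$ to the empty state $\0$, so that the increments $Z_k = (X_1(\tau_{k+1})-X_1(\tau_k)) - \hv_{N+1}(\tau_{k+1}-\tau_k)$ are i.i.d.; exponential moments for excursion lengths and excursion maxima come from the Foster--Lyapunov estimates of Fayolle--Malyshev--Menshikov--Sidorenko, the mean $\IE Z=0$ is computed by optional stopping together with the occupation-time formula $\varpi(z)=\IE\int_{\tau_1}^{\tau_2}\1{\eta(t)=z}\,\ud t/\IE\kappa$, and the classical i.i.d.\ CLT plus a renewal (Anscombe-type) inversion gives the result with $\sigma^2=\alpha\,\Var(Y-\hv_{N+1}\kappa)$. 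You instead solve the Poisson equation $\cL\psi=-f$ for the centred drift $f=g-\pi(g)$ of $X_1$, form the Dynkin martingale for $h(x_1,\eta)=x_1+\psi(\eta)$, and invoke the martingale FCLT, obtaining $\sigma^2$ as a Green--Kubo integral $\pi(\Gamma)$. Both routes lean on the same underlying input (geometric ergodicity of the stable Jackson network via an exponential Lyapunov function), but they package it differently: the paper needs only exponential moments of regeneration functionals and then elementary i.i.d.\ theory, whereas you need the stronger $V$-uniform ergodicity with $\pi(V^{2+\delta})<\infty$ and the $|\psi|\leq CV$ bound on the Poisson solution --- which, as you note, is the main technical burden, though it is achievable by taking the exponents $c_j$ sufficiently close to $1$. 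Two points in your favour: your positivity argument for $\sigma^2$ (propagating the constraints $\Gamma\equiv 0$ through the transitions, using $b_i>0$ for all $i$ under~\eqref{ass:positive-rates} to reach a contradiction with the rate-$b_{N+1}$ jump) is sound and worth spelling out; and your splitting of $[0,t]$ at $t-t^{1/3}$ to factorize $\IE[\re^{\mathrm{i}wt^{-1/2}(X_1(t)-\hv_{N+1}t)}\1{\eta(t)=z}]$ gives an explicit treatment of the asymptotic independence between the rescaled position and the gap configuration --- a part of the statement that the paper's written proof passes over rather quickly. The final transfer to all coordinates via $X_i(t)-X_1(t)=(i-1)+\sum_{j<i}\eta_j(t)$ and tightness is the same in both treatments.
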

	
	\begin{remarks}
		\phantomsection
	\label{rems:clt}
\begin{myenumi}[label=(\alph*)]
\setlength{\itemsep}{0pt plus 1pt}
\item	\label{rems:clt-a}  Theorem~\ref{thm:clt} covers the case of a single stable cloud. In the general (partially stable) case,
we anticipate that there is a joint central limit theorem in which the components associated with clouds with different
speeds are independent, while those associated with clouds of the same speed are correlated;
  in the case in which all clouds are singletons, one case of such a result is known in the context of the scaling limit
  results of~\cite{kpsAIHP}: see Section~\ref{sec:scaling} below. 
\item 	\label{rems:clt-b}
In the case $N \geq 2$, Theorem~\ref{thm:clt}
is deduced from Thm.~10 of~\cite{ak},
which gives a central limit theorem for counting processes in stable Jackson networks;
when $N \geq 2$, Thm.~11 of~\cite{ak} also gives an expression for the limiting variance~$\sigma^2$, which is in principle
computable, but somewhat involved. For $N=1$, Theorem~\ref{thm:clt}
does not fall directly under the scope of Thms.~10 and~11 of~\cite{ak},
since one must distinguish between the two different types of arrivals and departures (to the left or to the right);
nevertheless, the result can be deduced from the general approach of~\cite{ak} in the $N=1$ case too (use e.g.~their Thm.~5). 
The main additional ingredient needed to deduce Theorem~\ref{thm:clt}
from~\cite{ak} is the asymptotic independence between the fluctuations of the location of the cloud ($X_1$) and the internal displacements ($\eta$), 
which is a consequence of a separation of time scales: see Section~\ref{sec:clt} below.
\end{myenumi}
	\end{remarks}

As mentioned in Remarks~\ref{rems:clt}\ref{rems:clt-b},
the $N=1$ case of Theorem~\ref{thm:clt} is not treated explicitly in~\cite{ak}.
Thus it is of interest to give an explicit computation of $\sigma^2$ in that case.
We will use the properties of the M/M/1 queue to do so: this is the next result.
	
	\begin{theorem}
	\label{thm:two-particle-clt}
	Suppose that $N =1$,
that~\eqref{ass:positive-rates} holds, 
	and $a_1 + b_2 < a_2 + b_1$. Then
	the central limit theorem in Theorem~\ref{thm:clt} holds, with
	\begin{equation}
	\label{eq:sigma-two-particle}
	\hv_2 = \frac{b_1b_2 - a_1 a_2}{a_2 + b_1}, \text{ and } \sigma^2 = \frac{a_1 a_2 + b_1 b_2}{a_2 + b_1} .
	\end{equation}
That is, if $Z \sim \cN (0,  \sigma^2)$, with $\sigma^2$ given at~\eqref{eq:sigma-two-particle},
then, as $t \to \infty$,
\begin{equation}
\label{eq:two-particle-clt}
 \left( \frac{X_1 (t) - \hv_2 t}{\sqrt{t}} , \frac{X_2 (t) - \hv_2 t}{\sqrt{t}} \right) \tod (Z , Z). \end{equation}
	\end{theorem}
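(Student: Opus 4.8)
The plan is to reduce everything to the identification of the constant $\sigma^2$, since the distributional statements are already available. When $N=1$ the hypothesis $a_1+b_2 < a_2+b_1$ is precisely the stability condition $\rho_1 < 1$ of Corollary~\ref{cor:stable} (here $\rho_1 = \hrho_{[2]}(1) = (a_1+b_2)/(a_2+b_1)$), so Theorem~\ref{thm:clt} applies and yields $(X_1(t)-\hv_2 t)/\sqrt t \tod \cN(0,\sigma^2)$ for some $\sigma^2 \in (0,\infty)$, with $\hv_2$ the $m=2$ case of~\eqref{eq:v-stable}; and since $X_2(t)-X_1(t) = 1+\eta_1(t)$ is tight by Corollary~\ref{cor:stable}, the joint convergence~\eqref{eq:two-particle-clt} to $(Z,Z)$ follows too. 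So it remains only to show $\sigma^2 = (a_1a_2+b_1b_2)/(a_2+b_1)$. For this I would compute $\lim_{t\to\infty} t^{-1}\Var(X_1(t))$ directly: this limit is unaffected by the choice of (fixed) initial configuration and, combined with the distributional convergence and uniform integrability of $\{t^{-1}(X_1(t)-\hv_2 t)^2\}_{t\geq 1}$ (a routine fourth-moment estimate), it pins down $\sigma^2$.

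For the variance computation I would use the M/M/1 structure from Section~\ref{sec:jackson}: for $N=1$ the gap process $\eta := \eta_1$ is an M/M/1 queue with arrival rate $\lambda_1 = a_1+b_2$ and service rate $\mu_1 = b_1+a_2$. Write $U(t)$, $D(t)$ for the numbers of up- and down-jumps of $\eta$ in $[0,t]$, so $\eta(t) = \eta(0)+U(t)-D(t)$. Each up-jump of $\eta$ is caused either by a left-jump of particle~$1$ (rate $a_1$, which decreases $X_1$) or by a right-jump of particle~$2$ (rate $b_2$, which does not move $X_1$); each down-jump is caused either by a right-jump of particle~$1$ (rate $b_1$, which increases $X_1$) or by a left-jump of particle~$2$ (rate $a_2$, which does not move $X_1$). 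Since these competing rates do not depend on the state of $\eta$, a standard marking argument shows that, conditionally on the entire path of $\eta$, the causes of the jumps are independent, with each down-jump being a right-jump of particle~$1$ with probability $p_B := b_1/(a_2+b_1)$ and each up-jump being a left-jump of particle~$1$ with probability $p_C := a_1/(a_1+b_2)$. This gives the representation
\[ X_1(t) = X_1(0) + \sum_{k=1}^{D(t)} B_k - \sum_{k=1}^{U(t)} C_k , \]
where $(B_k)_{k\geq 1}$, $(C_k)_{k\geq 1}$ are independent i.i.d.\ sequences of $\mathrm{Bernoulli}(p_B)$ and $\mathrm{Bernoulli}(p_C)$ variables, independent of the path of $\eta$.

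I would then perform the computation with $\eta(0)$ in the stationary (geometric) regime. Centring the Bernoullis, using $U(t)-D(t) = \eta(0)-\eta(t)$, and noting that $(p_B-p_C)\lambda_1 = b_1\rho_1 - a_1 = \hv_2$, one gets
\[ X_1(t) - X_1(0) - \hv_2 t = \sum_{k=1}^{D(t)}(B_k-p_B) - \sum_{k=1}^{U(t)}(C_k-p_C) + (p_B-p_C)\bigl(D(t)-\lambda_1 t\bigr) + E(t), \]
where $E(t) := p_C(\eta(0)-\eta(t))$ is bounded in $L^2$ uniformly in $t$. Conditioning on the path of $\eta$, the first two sums are centred, mutually independent, and independent of the $\eta$-measurable quantity $D(t)-\lambda_1 t$, so the three main terms are pairwise uncorrelated; their second moments are $p_B(1-p_B)\IE D(t)$, $p_C(1-p_C)\IE U(t)$, and $(p_B-p_C)^2\Var(D(t))$. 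In stationarity $\IE U(t) = \IE D(t) = \lambda_1 t$, while Burke's theorem says the stationary M/M/1 departure process is Poisson of rate $\lambda_1$, so $\Var(D(t)) = \lambda_1 t$. Hence
\[ \lim_{t\to\infty}\frac{\Var(X_1(t))}{t} = \lambda_1\bigl(p_B(1-p_B)+p_C(1-p_C)+(p_B-p_C)^2\bigr) = \lambda_1\bigl(p_B+p_C-2p_Bp_C\bigr), \]
and inserting $\lambda_1 = a_1+b_2$, $p_B = b_1/(a_2+b_1)$, $p_C = a_1/(a_1+b_2)$ and simplifying gives exactly $(a_1a_2+b_1b_2)/(a_2+b_1)$, the value in~\eqref{eq:sigma-two-particle}.

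The step I expect to be the main obstacle is the control of the departure process $D$: the use of Burke's theorem to evaluate $\Var(D(t))$, and — more delicately — making rigorous the conditional independence of the marking coins $(B_k),(C_k)$ from the path of $\eta$. One could instead compute $\Var(D(t))$ from the explicit covariance function of the M/M/1 occupation process, or via a martingale decomposition of $\int_0^t \1{\eta(s)\geq 1}\,\ud s$, but Burke's theorem keeps the algebra clean. The remaining ingredients — independence of the limiting variance from the fixed initial configuration, and the uniform-integrability upgrade from distributional convergence to convergence of variances — are routine, given that $\eta$ is positive recurrent with geometric tails.
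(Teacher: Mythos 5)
Your proposal is correct and follows essentially the same route as the paper's proof: reduce to computing $\lim_{t\to\infty}t^{-1}\Var(X_1(t))$ in the stationary regime, use the state-independent thinning of the M/M/1 arrival and departure streams into particle-$1$ and particle-$2$ jumps (your Bernoulli marks are exactly the paper's conditional binomials with $p_1=a_1/(a_1+b_2)$, $p_2=b_1/(a_2+b_1)$), and invoke Burke's theorem for $\Var(D(t))=\lambda t$. The only differences are bookkeeping — you organise the variance via an orthogonal decomposition into centred mark-sums plus $(p_B-p_C)(D(t)-\lambda_1 t)$, whereas the paper uses the total variance formula together with the identity $\tCov(A(t),D(t))=\lambda t-\tfrac12\tVar(\eta(t)-\eta(0))$ — and you are in fact slightly more explicit than the paper about the uniform-integrability step needed to convert convergence of variances into identification of $\sigma^2$.
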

	
		We finish this section with some further examples.
		
\begin{example}[1 dog and $N$ sheep; lattice Atlas model]
\label{ex_N_sheep}
Assume that $a_1=a\in (0,1)$,
and $b_1=a_2=b_2=\cdots=b_{N+1}=1$. That is, the leftmost particle
(`the dog')
has an intrinsic drift to the right, while all other particles
(`the sheep') have no intrinsic drift.
In this case  $\rho_j$ 
 given by~\eqref{eq:rho-stable}
satisfies
\begin{equation}
\label{eq:rho_k_sheep}
 \rho_j= a + (1-a) \frac{j}{N+1} \leq \frac{N +a}{N+1} < 1, \text{ for } j \in [N] . \end{equation}
By Corollary~\ref{cor:stable} the whole system is stable,
and, by~\eqref{eq:v-stable},
 the speed of the cloud is
$\hv_{N+1} =\frac{1-a}{N+1}$. By~\eqref{eq:cloud-size} and~\eqref{eq:rho_k_sheep}, the long-run
expected size of the particle cloud satisfies
\[
\lim_{t \to \infty} \IE (X_{N+1}(t) -X_1(t) ) = \frac{N+1}{1-a}
 \left( 1+\frac{1}{2}+\cdots+\frac{1}{N} \right)
  \sim (1-a)^{-1}N\log N,
\]
as $N \to \infty$. In~\cite[pp.~191--192]{bfl} the closely related model
where $a_1 = 0$, $b_1 = 1$, and $b_i = 1 - a_i = b \neq 1/2$ for all $i \geq 2$ is treated; taking $b \to 1/2$
in~(2.30) in~\cite{bfl} recovers a version of this example.
This is a lattice relative of the continuum \emph{Atlas model}~\cite{bfk,ipbkf}, in which 
the leftmost diffusing particle (Atlas) carries the weight of the rest on its shoulders: see~Section~\ref{sec:discussion} below for
a discussion of such continuum models. 
\end{example}

\begin{example}[Sheep between two dogs, symmetric case]
\label{ex_2dogs_symm}
 We now set $a_1=b_{N+1}=a\in(0,1)$, 
and $b_1=a_2=b_2 = \cdots =a_{N+1}=1$; i.e., now the 
first and the last particles are dogs with drifts directed
inside, and the particles between those are sheep.
Then, we readily find $\rho_k=a$ for $k=1,\ldots,N$
and $\hv_{N+1} =0$ (the last fact is clearly a consequence of symmetry).
In particular, the expected size 
of the cloud is linear in~$N$, unlike in Example~\ref{ex_N_sheep}.
\end{example}

\begin{example}[Sheep between two dogs, asymmetric case]
\label{ex_2dogs_ab}
Assume that $a_1=a$ and $b_{N+1}=b$ with $0<a\leq b<1$, 
and $b_1=a_2=b_2= \cdots =a_{N+1}=1$;
this is a generalization of Example~\ref{ex_2dogs_symm}, in which 
the drift of the left dog is permitted to be stronger than the right.
Here, \eqref{eq:rho-stable} and \eqref{eq:v-stable}
imply that $\hv_{N+1} =\frac{b-a}{N+1} \geq 0$ 
and $\rho_k = a+\frac{(b-a)k}{N+1}$.
\end{example}

	\begin{example}[Recurrence of small systems with constant drifts for $N \leq 2$]
	\label{ex:all-equal2}
	Consider again the setting of Example~\ref{ex:all-equal}, where $b_i - a_i \equiv u \in \R$ for all $i \in [N+1]$.
	If $N=1$ (two particles), then the argument in Example~\ref{ex:all-equal} shows that the system is recurrent, in the sense that $\liminf_{t \to \infty} ( X_{2} (t) - X_1 (t) ) = 1$, a.s.
	In the case $N=2$ (three particles), it can be shown that the system is also recurrent, i.e.,  $\liminf_{t \to \infty} ( X_{3} (t) - X_1 (t) ) = 2$, a.s.;
	it is, however, `critically recurrent' in that excursion durations are heavy-tailed.
	This follows by considering $(\eta_1(t), \eta_2 (t))_{t \in \RP}$ as a random walk with boundary reflection in the quarter-plane $\ZP^2$
	and applying results of Asymont \emph{et al.}~\cite{afm}. To apply these results, it is most convenient to work with the corresponding discrete-time embedded process. Then, in the notation of~\cite[p.~942]{afm}, the entries in the interior covariance matrix are
	\[ \lambda_x = \frac{a_1+b_1+a_2+b_2}{r}, ~ \lambda_y = \frac{a_2+b_2+a_3+b_3}{r}, ~ R = -\frac{a_2+b_2}{r} ,\]
	where $r := \sum_{i=1}^3 (a_i + b_i)$, while the reflection vector components are $M'_x = - (u+a_2)/r'$, $M'_y = (a_2+b_3)/r'$,
	$M''_x = (a_1 +b_2)/r''$, $M''_y = (u - b_2)/r''$, where $r' := a_1 + b_1 + a_2 + b_3$ and $r'' := a_1 + b_2 + a_3 + b_3$.
	Then~\cite[Thm.~2.1]{afm} shows that one has recurrence if
	\[ - \lambda_x \frac{M''_y}{M''_x} - \lambda_y \frac{M'_x}{M'_y} + 2 R \geq 0 ,\]
	which in this case amounts, after a little simplification, to
	\[ \frac{(b_2 -u) (b_1 + a_2)}{a_1 +b_2} + \frac{(a_2+u) (b_2+a_3)}{a_2 + b_3}  \geq a_2 + b_2 ,\]
	which is true, with equality, since $b_1 + a_2 = a_1 + b_2$ and $b_2 + a_3 = a_2 + b_3$.
	
	On the other hand, if $N =3$, we believe that $\lim_{t \to \infty} ( X_{4} (t) - X_1 (t) ) = \infty$, a.s.;
	when $b_i \equiv a_i \equiv 1$ for all $i \in [4]$, it seems possible to approach this by reducing the problem
	to a random walk on $\Z^3$ that is zero drift everywhere, and homogeneous unless one or more coordinates is zero. 
	We conjecture that the same is true
 for any four (or more) consecutive particles in a constant-drift system with $N \geq 4$, but, since such questions appear rather delicate and 
	are not directly relevant for the main phenomena of the present paper, we do not explore them further here.
		\end{example}

	It is of interest to extend the questions posed in Example~\ref{ex:all-equal2} to the setting where
	there are several non-singleton stable clouds with the same speed. For example, if $\Theta = (\theta_1, \theta_2)$
	consists of two stable clouds with $\hv (\theta_1) = \hv (\theta_2)$, then we believe strongly that the system is recurrent, i.e., $\liminf_{t \to \infty} ( X_{N+1} (t) - X_1 (t)) < \infty$, a.s.
	A proof of this might be built on the central limit theorem, Theorem~\ref{thm:clt}, as follows. Suppose the contrary, that the system is transient, i.e., the two stable clouds drift apart. Then the clouds evolve essentially independently,
	so each should satisfy Theorem~\ref{thm:clt} with the same speed and variance on the same scale (with different constants), which suggests they would be in the wrong order with positive probability, providing a contradiction.
The following problem deals with the case of more than two clouds, and settling it seems harder.
	
		\begin{conjecture}
		Suppose that all clouds have the same speed, i.e., $\hv (\theta)$ is constant for $\theta \in \Theta$.
		We expect that, if $\vert\Theta\vert = 3$, then $\liminf_{t \to \infty} ( X_{N+1} (t) - X_1 (t)) < \infty$, a.s.
On the other hand, we expect that, if $\vert\Theta\vert \geq 4$, then $\liminf_{t \to \infty} ( X_{N+1} (t) - X_1 (t)) = \infty$, a.s.
		\end{conjecture}

		 The
		mathematical development to prove the results stated above begins  in  Section~\ref{sec:jackson}, where we
		explain the connection between our model and an appropriate Jackson queueing network, and build on
		the classical work of Goodman and Massey~\cite{GM84} to give some fundamental results on stability and dynamics.
	 Section~\ref{sec:proofs} develops these ideas further to examine the structure underlying Algorithm~\ref{alg:partition},
	 and here we give the proofs of Theorems~\ref{thm:main} and~\ref{thm:algorithm}, and their corollaries.
Section~\ref{sec:clt} is devoted to the central limit theorem in the stable case, and presents the proofs
	 of Theorems~\ref{thm:clt} and~\ref{thm:two-particle-clt}.
			 Finally, in Section~\ref{sec:discussion} we discuss the relationship between the lattice model that we study here and 
		a family of continuum models that have been studied in the literature, and mention some open problems in that direction.
		
\section{Representation as a Jackson network}
\label{sec:jackson}

Consider a system of $N \in \N$ queues, labelled by $[N]$. The parameters of the system are arrival rates $\lambda = (\lambda_i)_{i \in [N]}$,
service rates $\mu = (\mu_i)_{i \in [N]}$ and $P = (P_{ij})_{i,j\in[N]}$, a sub-stochastic routing matrix.
Exogenous customers entering the system arrive at queue $i \in [N]$ via an independent Poisson process of rate $\lambda_i \in \RP$.
Queue $i \in [N]$ serves customers at exponential rate $\mu_i \in \RP$. Once a customer at queue $i$ is served, it is routed to a queue~$j$
with probability $P_{ij}$, while with probability $Q_i := 1 - \sum_{j \in [N]} P_{ij}$ the customer departs from the system.

Provided $\sum_{i \in [N]} \lambda_i >0$ and $\sum_{i \in [N]} Q_i >0$, customers both enter and leave the system, and
it is called an \emph{open Jackson network}. We assume that every queue can be \emph{filled}, 
meaning that, for every $i \in[N]$, there is a $j \in [N]$ and $k \in \ZP$ for which $\lambda_j > 0$ and $(P^k)_{ji}>0$,
and that every queue can be \emph{drained}, meaning that,  for every $i \in[N]$, there is a $j \in [N]$ and $k \in \ZP$ for which $Q_j > 0$ and $(P^k)_{ij}>0$.
The process that tracks the number of customers in each queue at time $t \in \RP$ is a continuous-time Markov chain on $\ZP^N$.
Jackson networks are named for early contributions of R.R.P.~Jackson~\cite{jackson54} and J.R.~Jackson~\cite{jackson57,jackson63}. For a general overview see e.g.~\cite[Ch.~2~\&~7]{ChenYao} or~\cite[Ch.~1]{serfozo}.

\begin{figure}[t]
\begin{center}
\includegraphics{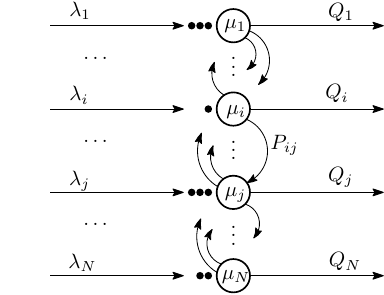}\quad\,
\smallskip

\caption{Schematic of a Jackson network on~$N$ nodes. The $\lambda_i$ are the exogenous arrival rates, the $\mu_i$ are the service rates, and $P$ is the (substochastic) routing matrix by which customers are
redirected following service. From queue~$i$, particles depart the system at rate $Q_i = 1 - \sum_{j} P_{ij}$. The process $\eta$ from~\eqref{eq:eta-def} for the particle system
described in Section~\ref{sec:intro} can be interpreted as the queue-length process for a Jackson network with parameters given by~\eqref{eq:mu-def}, \eqref{eq:lambda-def} and~\eqref{eq:P-def}.}
\label{f_JN}
\end{center}
\end{figure}

For $i \in [N]$ recall from~\eqref{eq:eta-def} that
$\eta_i (t) = X_{i+1} (t) - X_i (t) -1$,
the number of empty sites between consecutive particles at time $t\in\RP$. 
We claim that the process $\eta = (\eta_{i})_{i \in [N]}$ is  precisely the queue-length process for a corresponding Jackson network, namely,
the Jackson network with parameters $\lambda, \mu$ and $P$ given as functions of $(a_i,b_i)_{i \in [N+1]}$ through formulas~\eqref{eq:mu-def}, \eqref{eq:lambda-def} and~\eqref{eq:P-def}.

To see this, observe that `customers' in the queueing network correspond to unoccupied sites between particles in the particle system. 
Exogenous customers enter the network only when the leftmost particle jumps to the
left (rate $a_1$) or the rightmost particle jumps to the right (rate $b_{N+1}$). Customers at queue $i$ are `served' if either the particle at the left end of the interval jumps right
(rate $b_i$) or if the particle at the right end of the interval jumps left (rate $a_{i+1}$). If the 
particle at the left end of the interval jumps right, the customer 
is routed to queue $i-1$ (if $i \geq 2$, at rate $b_i = \mu_i P_{i,i-1}$) or leaves the system. Similarly in the other case.
Customers leave the system only when the leftmost particle jumps to the right (rate $\mu_1 Q_1 = b_1$)
or the rightmost particle jumps to the left (rate $\mu_{N} Q_N = a_{N+1}$).

For some of the results in this section, we will relax assumption~\eqref{ass:positive-rates} to the following;
but see the remark below~\eqref{ass:positive-rates} about generality.

\medskip
\begin{description}
\item\namedlabel{ass:fill-empty}{$\text{A}'$} 
Suppose that either $a_i >0$ for all~$i \in [N+1]$, or $b_i >0$ for all~$i \in [N+1]$.
\end{description}
\medskip

If~\eqref{ass:fill-empty} holds, then the parameters $\lambda, \mu$ and $P$ satisfy the conditions to ensure that
the system~$\eta$ is an open Jackson network in which every queue can be filled and drained:
for example, if $b_i >0$ for all~$i$, then  customers can enter the network at queue~$N$ (since $\lambda_N > 0$), progress in sequence through queues $N-1, N-2, \ldots$, and exit through queue~$1$ (since $Q_1 = \frac{b_1}{b_1+a_2} >0$).
The following statement summarizes the Jackson representation.

\begin{proposition}
\label{prop:Jackson-representation}
		Let $N \in \N$ and suppose that~\eqref{ass:fill-empty} holds. 
The evolution
of the process $\eta = (\eta_{i})_{i \in [N]}$
can be described by an open Jackson network in which every queue can be filled and drained,
with $\eta_i(t)$ being the 
number of customers at queue $i \in [N]$ at time~$t\in\RP$. The  exogenous 
arrival rates $\lambda$ are given by~\eqref{eq:lambda-def},
service rates $\mu$ are given by~\eqref{eq:mu-def},
and routing matrix $P$ is given by~\eqref{eq:P-def}.
\end{proposition}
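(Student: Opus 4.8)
The plan is to establish the claimed identification of $\eta$ with a Jackson queue-length process by directly comparing infinitesimal generators. First I would recall that $\eta = (\eta_i)_{i \in [N]}$ with $\eta_i(t) = X_{i+1}(t) - X_i(t) - 1$ takes values in $\ZP^N$, and argue that $\eta$ is itself a continuous-time Markov chain: although $\xi(t) = (X_1(t), \eta_1(t), \ldots, \eta_N(t))$ is the natural Markovian state, the transition rates of the $\eta$-components depend only on the $\eta$-configuration (whether a given gap is zero or positive), not on the absolute position $X_1$, so the marginal process $\eta$ is autonomously Markov. This should be spelled out by listing the jump events of the particle system and their effect on $\eta$: particle $i$ jumping right (rate $b_i$, allowed unless blocked by particle $i+1$, i.e. unless $\eta_i = 0$ when $i \le N$) decreases $\eta_{i-1}$ by $1$ (if $i \ge 2$) and increases $\eta_i$ by $1$ (if $i \le N$); particle $i$ jumping left (rate $a_i$, allowed unless $\eta_{i-1} = 0$ when $i \ge 2$) increases $\eta_{i-1}$ by $1$ (if $i \ge 2$) and decreases $\eta_i$ by $1$ (if $i \le N$).

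Next I would write down the generator of the open Jackson network with parameters $\lambda, \mu, P$ from~\eqref{eq:mu-def}, \eqref{eq:lambda-def}, \eqref{eq:P-def}: on a state $z = (z_i)_{i \in [N]} \in \ZP^N$, the network has an exogenous arrival to queue $i$ at rate $\lambda_i$ (changing $z_i \mapsto z_i + 1$); a service completion at queue $i$, occurring at rate $\mu_i$ when $z_i \ge 1$, which either routes the customer to queue $j$ at rate $\mu_i P_{ij}$ (changing $z_i \mapsto z_i - 1$, $z_j \mapsto z_j + 1$) or ejects it from the system at rate $\mu_i Q_i$ (changing $z_i \mapsto z_i - 1$). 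Using the explicit $P$ from~\eqref{eq:P-def}, the only nonzero routings are $i \to i-1$ at rate $\mu_i P_{i,i-1} = b_i$ (for $2 \le i \le N$) and $i \to i+1$ at rate $\mu_i P_{i,i+1} = a_{i+1}$ (for $1 \le i \le N-1$), with $\mu_i Q_i = b_1$ when $i=1$ and $a_{N+1}$ when $i=N$; one also checks $\mu_i Q_i = 0$ for interior~$i$, since $P_{i,i-1} + P_{i,i+1} = \frac{b_i + a_{i+1}}{\mu_i} = 1$ for $2 \le i \le N-1$. The $N=1$ case must be handled separately because queue~$1$ is then simultaneously the entry and exit queue: here $\lambda_1 = a_1 + b_2$ and $\mu_1 Q_1 = \mu_1 = b_1 + a_2$, matching the single-gap dynamics $\eta_1 \mapsto \eta_1 + 1$ at rate $a_1 + b_2$ and $\eta_1 \mapsto \eta_1 - 1$ at rate $b_1 + a_2$ (the latter only when $\eta_1 \ge 1$).

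The core of the proof is then the term-by-term matching of the two generators on every state. For $N \ge 2$ and a state with $z_i \ge 1$: the Jackson chain decreases $z_i$ (and increases $z_{i-1}$, or ejects) at total rate $b_i$, and decreases $z_i$ (and increases $z_{i+1}$, or ejects) at total rate $a_{i+1}$; in the particle system, when gap~$i$ is nonempty, the particle $i$ at the left of gap~$i$ can jump right (rate $b_i$), which is never blocked from that side and transfers an empty site to gap $i-1$ (or, if $i=1$, creates the situation that the leftmost particle has moved right, corresponding to a customer leaving), while the particle $i+1$ at the right of gap~$i$ can jump left (rate $a_{i+1}$), transferring an empty site to gap $i+1$ (or, if $i = N$, a customer leaving). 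The arrival rates $\lambda_1 = a_1$ (leftmost particle jumps left) and $\lambda_N = b_{N+1}$ (rightmost particle jumps right) match the creation of an empty site at gap~$1$ or gap~$N$ respectively, and are unconstrained. The one point requiring care — and the main obstacle — is the bookkeeping of the boundary/ejection terms and the constraint that a particle jump is suppressed precisely when the target gap is empty: one must verify that "service at queue~$i$ requires $z_i \ge 1$" corresponds exactly to "the particle adjacent to gap~$i$ on the relevant side can execute its jump only if gap~$i$ is non-empty", and that the exclusion suppression on the other side of the jumping particle (blocking when the neighbouring gap is zero) is correctly encoded as the "no service at an empty queue" rule for that neighbouring queue rather than as an extra constraint — i.e. checking there is no double-counting or missing constraint at the interfaces. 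Once the generators agree on all states, uniqueness of the Markov chain with a given generator (the chains are non-explosive, being bounded-rate) gives that $\eta$ is distributed as the Jackson queue-length process; the fill/drain conditions were already verified in the discussion preceding the proposition, so the "open Jackson network in which every queue can be filled and drained" qualifier follows from~\eqref{ass:fill-empty} as indicated there.
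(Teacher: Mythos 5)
Your approach is the same as the paper's: identify each particle jump with a queue event (arrival, internal routing, or departure) and verify the rates and state changes agree. The paper does this informally in the prose preceding the proposition; you phrase it as a generator comparison and add the observation that $\eta$ is autonomously Markov and that uniqueness of a bounded-rate generator finishes the argument, both of which are fine and standard. The $N=1$ special case and the fill/drain check are handled correctly.

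However, your opening list of jump effects has the signs reversed. From $\eta_i = X_{i+1} - X_i - 1$, when particle $i$ jumps right ($X_i \mapsto X_i + 1$) the gap to its left grows and the gap to its right shrinks: $\eta_{i-1} \mapsto \eta_{i-1} + 1$ (if $i \geq 2$) and $\eta_i \mapsto \eta_i - 1$ (if $i \leq N$), not the opposite as you wrote; similarly a left jump of particle $i$ gives $\eta_{i-1} \mapsto \eta_{i-1} - 1$ and $\eta_i \mapsto \eta_i + 1$. As written your first paragraph is also internally inconsistent: if a right jump of particle $i$ \emph{increased} $\eta_i$, there would be no reason for the constraint $\eta_i \geq 1$. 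You evidently have the correct picture — your matching paragraph says the right jump ``transfers an empty site to gap $i-1$'', which is $\eta_{i-1} \mapsto \eta_{i-1}+1$, $\eta_i \mapsto \eta_i - 1$ — so this is a slip, but it should be corrected since the sign bookkeeping is essentially the entire content of the proof.

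The ``main obstacle'' you flag about possible double-counting at interfaces is not actually present. A right jump of particle $i$ is constrained \emph{only} by $\eta_i$ (the gap it jumps into), and a left jump of particle $i$ is constrained \emph{only} by $\eta_{i-1}$; there is no ``suppression from the other side.'' Each of the $2(N+1)$ possible particle jumps maps to exactly one queue event — exogenous arrival at queue $1$ or $N$ (unconstrained left jump of particle $1$, right jump of particle $N+1$), or a service at queue $i$ routed to $i \mp 1$ or ejected — and the constraint $\eta_i \geq 1$ is exactly ``no service at an empty queue $i$'' and nothing more. The bijection is clean; it would be better to state it directly rather than hedging about a potential obstacle that does not arise.
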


\begin{remark}
\label{rem:jackson-exclusion}
We believe that Proposition~\ref{prop:Jackson-representation} is known, but we could not find the statement 
explicitly in the literature, and are not confident of an attribution. 
For simple exclusion with homogeneous jump rates one can find the result expressed by Kipnis~\cite[pp.~398--9]{kipnis}.
Spitzer~\cite[pp.~280--1]{spitzer} 
shows that the gaps of empty sites in exclusion process viewed from a tagged particle have
 a product-geometric stationary distribution,
but queues are not mentioned;
 the closest we have been able to find in~\cite{spitzer}
to an identification of exclusion and Jackson dynamics (rather than just stationary distributions)
 is Spitzer's report of
an observation by Kesten~\cite[p.~281]{spitzer} that Poisson streams can be identified in exclusion processes.
\end{remark}

The Jackson representation
given by Proposition~\ref{prop:Jackson-representation} enables us to obtain the following result,
which is, to a substantial degree, a translation to our context of a classical 
result for Jackson networks due to Goodman and Massey~\cite{GM84}, which characterizes
the stable subset $S$ of the queues in the network:
when $S = [N]$ the network is stable, when $S= \emptyset$ it is unstable,
and otherwise it is partially stable~(cf.~\cite{afsw}).

\begin{proposition}
\label{prop:general-Jackson}
		Let $N \in \N$ and suppose that~\eqref{ass:fill-empty} holds. 
The general traffic equation~\eqref{eq:general-balance} has a unique solution, which we denote by $\nu = (\nu_i)_{i \in [N]}$.
Write $\rho_i := \nu_i / \mu_i$ for all $i \in [N]$ and $S := \{ i \in [N] : \rho_i < 1 \}$.
For every $z = (z_i)_{i \in S}$ with $z_i \in \ZP$, we have
\begin{align}
\label{eq:general-Jackson-limit}
 \lim_{t \to \infty} \frac{1}{t} \int_0^t \ind \left( \bigcap_{i \in S} \left\{ \eta_i (s) = z_i \right\} \right) \ud s 
& =  \lim_{t \to \infty} \IP \left[ \bigcap_{i \in S} \left\{ \eta_i (t) = z_i \right\}\right]  \nonumber\\
& = \prod_{i \in S} (1-\rho_i) \rho_i^{z_i} ,\end{align}
where the first equality holds a.s.~and in $L^1$. Moreover, for each fixed $\eta(0) \in \ZP^N$, there exists $\delta >0$ such that, 
\begin{equation}
\label{eq:eta-bound}
\sup_{t \in \RP} \IP \left( \eta_i (t) \geq z \right) \leq \re^{-\delta z} , \text{ for all } i \in S \text{ and all } z \in \RP.
\end{equation}
On the other hand, for $U := [N] \setminus S = \{ i \in [N] : \rho_i \geq 1\}$ and all $B < \infty$,
\begin{equation}
\label{eq:mass-escape} 
\lim_{t \to \infty} \frac{1}{t} \int_0^t \ind \left( \bigcup_{i \in U} \left\{ \eta_i (s) \leq B\right\} \right) \ud s
= 
\lim_{t \to \infty} \IP \left[ \bigcup_{i \in U} \left\{ \eta_i (t) \leq B\right\} \right] = 0 ,\end{equation}
where the first equality holds a.s.~and in $L^1$. 
\end{proposition}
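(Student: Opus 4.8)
The plan is to deduce everything from the classical Goodman--Massey analysis of (possibly unstable) open Jackson networks, applied to the network identified in Proposition~\ref{prop:Jackson-representation}. First I would recall that, under~\eqref{ass:fill-empty}, Proposition~\ref{prop:Jackson-representation} tells us that $\eta = (\eta_i)_{i\in[N]}$ is the queue-length Markov chain of an open Jackson network in which every queue can be filled and drained, with data $\lambda,\mu,P$ given by~\eqref{eq:lambda-def}, \eqref{eq:mu-def}, \eqref{eq:P-def}. The Goodman--Massey theorem~\cite{GM84} states, for exactly such networks, that the nonlinear traffic equation $\nu = (\nu\wedge\mu)P + \lambda$ has a unique solution $\nu$; that the set $S := \{i : \rho_i < 1\}$ of \emph{stable} (subcritical) queues and its complement $U := \{i : \rho_i \geq 1\}$ of \emph{unstable} (saturated) queues are well-defined; that the marginal of $\eta$ restricted to the coordinates in~$S$ converges to the product-geometric distribution $\prod_{i\in S}(1-\rho_i)\rho_i^{z_i}$; and that each saturated queue has queue length tending to infinity in probability. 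So the bulk of the work is citation and careful checking that the hypotheses of~\cite{GM84} (open network; every queue fillable and drainable; finitely many queues) are met, which Proposition~\ref{prop:Jackson-representation} and the discussion preceding it already establish.

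The remaining items require a little more than a bare citation. For the time-average (ergodic-type) statements --- the first equalities in~\eqref{eq:general-Jackson-limit} and~\eqref{eq:mass-escape}, holding a.s.\ and in $L^1$ --- I would argue as follows. Decompose $[N] = S \cup U$. Goodman and Massey show that on the set~$U$ the coordinates escape to infinity, while the process restricted to~$S$, with the $U$-queues treated as permanently saturated, behaves asymptotically like a \emph{stable} Jackson (sub)network with effective parameters. A stable, irreducible, positive-recurrent continuous-time Markov chain on $\ZP^{|S|}$ obeys the ergodic theorem: for any bounded $f$, $t^{-1}\int_0^t f(\eta_S(s))\,\ud s \to \IE_\pi f$ a.s.\ and in $L^1$, where $\pi$ is the product-geometric stationary law. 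To transfer this to the genuine process I would use a coupling/domination argument: once the unstable queues are large (which happens eventually, with the residual probability of them being small controlled uniformly), the dynamics of the $S$-coordinates coincide with those of the limiting stable network. The $L^1$ convergence then follows from boundedness of the integrand together with the pointwise convergence (dominated convergence), and the distributional limit (second equality) is the Goodman--Massey convergence-in-distribution statement. For~\eqref{eq:mass-escape}, the distributional part is $\IP[\eta_i(t) \leq B] \to 0$ for each $i \in U$ (Goodman--Massey), and the time-average part again follows because $\ind(\bigcup_{i\in U}\{\eta_i(s)\le B\})$ is bounded and tends to~$0$ in probability along the relevant sense, upgraded to the a.s./$L^1$ Cesàro statement by the ergodic decomposition.

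For the uniform-in-time exponential tail bound~\eqref{eq:eta-bound} on the stable queues, I would proceed by a Lyapunov/drift argument rather than relying on~\cite{GM84} directly. Fix $\eta(0) \in \ZP^N$. On the coordinates $i \in S$, one can construct a linear Lyapunov function $V(\eta) = \sum_{i\in S} c_i \eta_i$ with positive weights $c_i$ chosen so that $V$ has uniformly negative drift outside a bounded set, using the subcriticality $\rho_i < 1$; this is the standard Foster--Lyapunov input. Stochastic domination of $\eta_i$ by a birth-and-death chain (or an M/M/1 queue) with subcritical load $\rho_i' \in (\rho_i, 1)$ --- valid because additional customers from saturated upstream queues arrive at bounded effective rate --- gives $\sup_{t}\IP(\eta_i(t)\ge z) \le C\re^{-\delta z}$ with $\delta$ depending only on the gap to criticality and on $\eta(0)$ through the transient phase. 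Alternatively, exponential moments of the form $\sup_t \IE\re^{\delta\eta_i(t)} < \infty$ follow from a geometric-drift (Meyn--Tweedie) criterion, then Markov's inequality yields~\eqref{eq:eta-bound}.

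The main obstacle I anticipate is the careful handling of the interaction between stable and unstable queues in the time-average statements: making rigorous the intuition that ``once the unstable queues are large, the stable queues see a fixed stable network'' requires a coupling that synchronises the two processes after a random (but a.s.\ finite) time, plus control of the contribution from the initial transient window to the Cesàro average --- the latter is $O(1/t) \to 0$ and so harmless, but the coupling itself, and verifying that the limiting stable network has the product-geometric law with exactly the parameters $\rho_i = \nu_i/\mu_i$ coming from the nonlinear traffic equation, is the delicate point. All the ingredients are in~\cite{GM84}; the work is in assembling them and upgrading convergence in distribution to the a.s.\ and $L^1$ Cesàro statements via the ergodic theorem for the limiting chain.
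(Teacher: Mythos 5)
Your high-level strategy coincides with the paper's: cite Goodman--Massey for existence, uniqueness, and distributional convergence, then upgrade to a.s.\ and $L^1$ Ces\`aro limits by comparison with auxiliary \emph{stable} Jackson networks to which the ergodic theorem applies. But there is a genuine gap in your mechanism for the time-average statements.

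You propose to ``synchronise'' the true process with the limiting stable network after a random a.s.\ finite time, on the grounds that once the unstable queues are large the $S$-coordinates see the same dynamics. This works only when every $i\in U$ is strictly supercritical, $\rho_i>1$, so that $\eta_i(t)\to\infty$ a.s.\ and hence is eventually permanently positive. In the \emph{critical} case $\rho_i=1$ the unstable queue returns to~$0$ infinitely often, and there is no finite random time after which the dynamics coincide; the synchronisation coupling fails. Moreover, you only produce an upper comparison (your ``permanently saturated'' picture is precisely the paper's $\eta^+$ construction), and never a matching lower bound. The paper's proof instead uses a \emph{two-sided} monotone coupling valid for \emph{all} times: $\eta^\eps_i(t)\le\eta_i(t)\le\eta^+_i(t)$. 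Here $\eta^+$ replaces the $U\to S$ flow by independent Poisson streams at the maximal rates $\mu_iP_{ij}$ (so $\eta^+_S$ is a stable Jackson network with the same traffic-equation solution on $S$), while $\eta^\eps$ increases the departure rate at every $U$-node by a small $\eps>0$, making the \emph{whole} network $\eta^\eps$ stable with $\rho^\eps_i\to\rho_i$ as $\eps\to 0$; applying the ergodic theorem to each and squeezing over $\eps\to 0$ handles the critical case. Without the lower bound and the $\eps$-perturbation you cannot close the argument, and in particular you cannot prove \eqref{eq:mass-escape} in the critical case, since nothing forces the Ces\`aro time at bounded heights to vanish.

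Your Lyapunov/domination route to \eqref{eq:eta-bound} is a reasonable alternative in spirit to the paper's, which derives the bound from exponential ergodicity of the dominating stable network $\eta^+$ (via the Lyapunov construction of Fayolle--Malyshev--Menshikov--Sidorenko, or the spectral gap of Lorek--Szekli), combined with a short-time Poisson-tail estimate on the total arrivals to handle $t$ small relative to $z$. But note that an M/M/1 domination of a \emph{single} coordinate $\eta_i$ with $i\in S$ is delicate: the arrival rate into queue $i$ from its neighbours is not Poisson and depends on the states of those neighbours, one of which may be unstable. The paper's $\eta^+$ device is precisely what makes the dominating process a genuine Jackson network with Poisson exogenous input, so that exponential ergodicity applies. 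You would need to make the same move to justify the stochastic domination you invoke.
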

\begin{proof}
Existence and uniqueness of the solution to~\eqref{eq:general-balance}
under the stated conditions follows from Thm.~1 of Goodman and Massey~\cite{GM84},
which also gives the rightmost equalities in~\eqref{eq:general-Jackson-limit} and~\eqref{eq:mass-escape}.

The convergence of the time-averages in~\eqref{eq:general-Jackson-limit} and~\eqref{eq:mass-escape}
 we deduce from the construction of Goodman and Massey.  For a stable system, the Markov chain ergodic theorem 
applies directly; in the general case $U \neq \emptyset$, the 
 idea is to construct modified processes (by adjusting relevant Jackson network parameters) that are stable and satisfy
appropriate stochastic comparison inequalities with components of $\eta$, and again appeal to the ergodic theorem. For convenience, we recapitulate the main steps in the construction here,
pointing to~\cite{GM84} for some more details; we start with the upper bound.

Following~\cite[p.~865--6]{GM84}, we construct a continuous-time Markov chain $\eta^+$ with components $\eta^+_i (t)$, $t \in \RP$, $i \in [N]$,
by re-routing certain customers. Specifically, for every $i \in U$ and $j \in S$ with $P_{ij} > 0$, we (i)~declare that customers, instead of flowing from $i$ to $j$, will depart the system, and (ii)~add an exogenous arrival stream into~$j$
with rate equal to $\mu_i P_{ij}$, the maximal flow rate from $i$ to $j$. In other words, the modified system $\eta^+$ is a Jackson network
with parameters $\mu^+_i = \mu_i$, 
\[ \lambda^+_j := \lambda_j + \1 { j \in S } \sum_{i \in U} \mu_i P_{ij}, \text{ and } P^+_{ij} := \1 { i \notin U \text{ or } j \notin S } P_{ij} .\]
Since no customers flow from $U$ to $S$ in $\eta^+$, the process $\eta^+_S := ( \eta^+_i )_{i \in S}$ observed only on $S$ is itself
the queue-length process of a Jackson network, with parameters $\lambda^+_j = \lambda_j + \sum_{i \in U} \mu_i P_{ij}$, $j \in S$, and $P^+_{ij} = P_{ij}$, $i,j \in S$.
The solution to the general traffic equations for $\eta^+_S$ coincides with the solution $\nu$ to the equations for $\eta$ restricted to~$S$,
and hence, by definition of~$S$, $\eta^+_S$ is stable. Then we apply
the ergodic theorem for 
	 irreducible, continuous-time Markov chains (see e.g.~Thm.~3.8.1 of~\cite[p.~126]{norris}) to obtain
	\[ \lim_{t \to \infty} \frac{1}{t} \int_0^t \ind \left( \bigcap_{i \in S} \left\{ \eta^+_i (s) = z_i \right\} \right) \ud s = \prod_{i \in S} (1-\rho_i) \rho_i^{z_i} , \as \]
	Moreover, we can construct $\eta$ and $\eta^+$ on the same probability space, so
	that $\eta_i (t) \leq \eta^+_i (t)$ for all $i \in [N]$ and all $t \in \RP$: see~\cite[p.~867--8]{GM84} for details.
	Hence we conclude that
\begin{equation}
\label{eq:ergodic-limsup}
\limsup_{t \to \infty} \frac{1}{t} \int_0^t \ind \left( \bigcap_{i \in S} \left\{ \eta_i (s) = z_i \right\} \right) \ud s \leq \prod_{i \in S} (1-\rho_i) \rho_i^{z_i} , \as \end{equation}
In the other direction, it is shown in~\cite[pp.~866--7]{GM84}, that for any $\eps >0$,
by increasing the rate of departures from the system for every node in~$U$, one can construct
a process $\eta^\eps$ which is stable, for which $\eta_i (t) \geq \eta^\eps_i (t)$ for all $i \in [N]$
and all $t \in \RP$.
 In more detail,
if $\nu = (\nu_i)_{i \in [N]}$ is the solution to~\eqref{eq:general-balance} for $\eta$, then $\eta^\eps$ is a Jackson system with parameters 
$\lambda^\eps, \mu^\eps, P^\eps$, where, for $\eps \geq 0$,
$\lambda^\eps_i := \lambda_i$, 
\begin{equation}
\label{eq:parameters-lower-bound}
\mu^\eps_i := \begin{cases} \mu_i & \text{if } i \in S, \\ \nu_i + \eps & \text{if } i \in U, \end{cases} ~\text{and}~ P^\eps_{ij} :=  \begin{cases} P_{ij} & \text{if } i \in S, \\ \frac{\mu_i}{\nu_i + \eps} P_{ij} & \text{if } i \in U. \end{cases} \end{equation}
For $i \in U$, we have $\nu_i \geq \mu_i$, and hence $Q^\eps_i := 1 - \sum_{j \in [N]} P^\eps_{ij} > 0$ for $\eps>0$, 
so customers served at states in~$U$ may now depart the system.
Note that, by~\eqref{eq:parameters-lower-bound}, $\lim_{\eps \to 0} \mu^\eps = \mu^0$
and $\lim_{\eps \to 0} P^\eps = P^0$. We also note that, for every $\eps \geq 0$, $P^\eps$ is an irreducible substochastic matrix with at least one row sum strictly less than~$1$,
so the matrix power $(P^\eps)^k$ tends to $0$ as $k \to \infty$.

Let $\nu^\eps$ be the solution of the general traffic equation~\eqref{eq:general-balance} for $\eta^\eps$.
As shown in~\cite[p.~867]{GM84}, one also has $\lim_{\eps\to 0} \nu^\eps = \nu^0$ where $\nu^0$ satisfies $\nu^0 = (\nu^0 \wedge \mu^0) P^0 + \lambda$;
we will show that
$\nu^\eps \leq \nu^0 = \nu$.
 By construction,
it is the case that $\nu = (\nu \wedge \mu) P + \lambda = \nu P^0 + \lambda$, and since $P^\eps_{ij} \leq P^0_{ij}$ for all $i,j$, we get
$\nu^\eps \leq \nu^\eps P^0 + \lambda$. 
Hence $\tilde\nu:= \nu^\eps - \nu$ satisfies $\tilde \nu \leq P^0 \tilde \nu$, and since $(P^0)^k$ tends to $0$ as $k \to \infty$,
it follows that $\nu^\eps_i \leq \nu_i$ for all $i \in [N]$. Moreover,
since $\nu_i < \mu_i^\eps$ for all $i \in [N]$ and every $\eps>0$, by definition of $\mu^\eps$, this means that $\nu^\eps_i < \mu_i^\eps$ for all $i \in [N]$. Hence $\eta^\eps$ is stable for every $\eps >0$.
Thus in fact $\nu^\eps = \nu^\eps P^\eps + \lambda$, i.e., $\nu^\eps (I - P^\eps) = \lambda$, for $\eps >0$.
For every $\eps \geq 0$, the matrix $I-P^\eps$ is an \emph{M-matrix}, and hence invertible 
(this follows from e.g.~Lemma~7.1 of~\cite{ChenYao} and the fact that $(P^\eps)^k$ tends to $0$ as $k \to \infty$).
Therefore $\nu^\eps = \lambda (I-P^\eps)^{-1}$  for $\eps>0$, and $\nu^0 = \lim_{\eps \to 0} \nu^\eps = \lambda (I - P^0)^{-1} = \nu$.
In particular, from~\eqref{eq:parameters-lower-bound}, we have that for $i \in S$, $\lim_{\eps \to 0} \rho^\eps_i = \lim_{\eps \to 0} \nu^\eps_i / \mu_i = \rho_i <1$,
while for $i \in U$, $\lim_{\eps \to 0} \rho^\eps_i = \lim_{\eps \to 0} \nu^\eps_i / \mu^\eps_i = 1$.
Hence, by another application
of the ergodic theorem,
\begin{equation}
\label{eq:ergodic-liminf}
\liminf_{t \to \infty} \frac{1}{t} \int_0^t \ind \left( \bigcap_{i \in S} \left\{ \eta_i (s) = z_i \right\} \right) \ud s \geq \prod_{i \in S} (1-\rho^\eps_i) (\rho_i^\eps)^{z_i} , \as \end{equation}
	Taking $\eps \to 0$ in~\eqref{eq:ergodic-liminf} and combining with~\eqref{eq:ergodic-limsup}, we obtain the a.s.~convergence in~\eqref{eq:general-Jackson-limit}; the $L^1$ convergence follows from the bounded
	convergence theorem. 	On the other hand, if $i \in U$, then, for any $z \in \N$, 
	\[ \limsup_{t \to \infty} \frac{1}{t} \int_0^t \1 { \eta_i (s) \leq z } \ud s \leq  
	1 - \liminf_{t \to \infty} \frac{1}{t} \int_0^t \sum_{k=1}^\infty \1 { \eta_i (s) = z+k } \ud s .\]
	Here, by Fatou's lemma, a.s.,
	\begin{align*}
	\liminf_{t \to \infty} \frac{1}{t} \int_0^t \sum_{k=1}^\infty \1 { \eta_i (s) = z+k } \ud s  & \geq  \sum_{k=1}^\infty 	\liminf_{t \to \infty} \frac{1}{t} \int_0^t  \1 { \eta_i (s) = z+k } \ud s \\
	& \geq  \sum_{k=1}^\infty 	 ( 1 - \rho_i^\eps ) ( \rho_i^\eps )^{z+k} = ( \rho_i^\eps )^{z+1} , \end{align*}
	and then taking $\eps \to 0$ gives the a.s.~convergence~\eqref{eq:mass-escape}; again, bounded convergence gives the $L^1$~case.
	
	It remains to prove~\eqref{eq:eta-bound}. Fix $\eta(0) \in \ZP^N$. 
The Goodman and Massey construction furnishes the coupling of $\eta$ and $\eta^+$ such that~$\eta_i(t) \leq \eta_i^+(t)$ for all $i \in [N]$ and all $t \in \RP$, where $\eta^+_S$ is an ergodic Jackson network over $S$, i.e., $\max_{i \in S} \rho^+_i < 1$.
Hence, by~\eqref{eq:general-Jackson-limit},   there exists $\delta >0$ such that $\lim_{t\to \infty} \IP ( \eta_i (t) \geq z )  \leq \lim_{t\to \infty} \IP ( \eta^+_i (t) \geq z )  \leq \re^{-\delta z}$ for all $i \in S$
	and all $z \in \RP$. 
	Moreover, since a stable Jackson network is exponentially ergodic,
	as is established in Thm.~2.1 of~\cite{fmms2} via a Lyapunov function approach (see also~\cite{fmms1}), or
	in~\cite{ls} via a spectral gap approach, it follows that there exists $\delta >0$ such that 
	$\IP ( \eta_i (t) \geq z ) \leq \IP ( \eta^+_i (t) \geq z ) \leq \re^{-\delta z} + \re^{-\delta t}$ for all $i \in S$ and all $t, z \in \RP$.
	Thus we can conclude that, for any $\eps >0$ there exists $\delta >0$ such that
	$\sup_{t \geq \eps z} \IP ( \eta_i (t) \geq z ) \leq \re^{-\delta z}$ for all $z \in \RP$ and all $i \in S$.
In addition, if $\zeta_t$ denotes the total number of exogenous arrivals in the Jackson network $\eta$ over time~$[0,t]$,
then $\max_{i \in [N]} ( \eta_i (t) - \eta_i (0) ) \leq \zeta_t$, a.s., and $\zeta_t$ is Poisson with mean $Ct$
for $C := \sum_{i \in [N]} \lambda_i < \infty$. Hence 
 $\IP ( \eta_i (t) - \eta_i (0) \geq  z ) \leq \IP ( \zeta_t \geq z )$,
so, for $\eps >0$ small enough, 
$\sup_{0 \leq t \leq \eps z}  \IP ( \eta_i (t) - \eta_i (0) \geq z ) \leq \IP ( \zeta_{\eps z} \geq z )$, and this decays exponentially in~$z$, by standard Poisson tail bounds.
Combining these bounds yields~\eqref{eq:eta-bound}.
\end{proof}

Using Proposition~\ref{prop:general-Jackson}, we show in Theorem~\ref{thm:speeds} that each particle
 satisfies a law of large numbers with a  deterministic \emph{asymptotic speed}.
Before stating that result, we state an algebraic result on the quantities that will play the roles of the speeds.

\begin{lemma}
\label{lem:speeds-algebra}
		Let $N \in \N$ and suppose that~\eqref{ass:fill-empty} holds. 
Suppose that $\nu = (\nu_i)_{i \in [N]}$ is the unique solution to the
general traffic equation~\eqref{eq:general-balance},
whose existence is guaranteed by Proposition~\ref{prop:general-Jackson}.
 Write $\rho_i := \nu_i / \mu_i$, and adopt the convention  that $\rho_0 = \rho_{N+1} = 1$.
Define
\begin{align}
\label{eq:speed-per-particle}
v_i & := \left( 1 \wedge \rho_i \right) b_i - \left( 1 \wedge \rho_{i-1} \right) a_i , \text{ for all } i \in [N+1].\end{align}
Then, writing $x^+ := x \1 { x \geq 0}$ for $x \in \R$, we have
\begin{align}
\label{eq:speed-neighbours}
v_{i+1} - v_i & =  (\rho_i -1)^+ ( b_i + a_{i+1} ) , \text{ for all } i \in [N] .
\end{align}
In particular, if $i \in [N]$ is such that $\rho_i \leq 1$, then $v_{i+1} = v_{i}$, while if $\rho_i >1$, then $v_{i+1} > v_i$.
\end{lemma}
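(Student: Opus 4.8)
\emph{Proof proposal.} Existence and uniqueness of $\nu$ are furnished by Proposition~\ref{prop:general-Jackson}, so what remains is purely algebraic: identity~\eqref{eq:speed-neighbours}, the last two sentences of the lemma being immediate consequences of it. The plan is to rewrite the general traffic equation~\eqref{eq:general-balance} as a flow-balance relation at each gap and then to telescope.

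I would begin by writing~\eqref{eq:general-balance} component by component, substituting the explicit parameters~\eqref{eq:mu-def}--\eqref{eq:P-def}. Since $P_{k,i} \neq 0$ only when $k = i \pm 1$, with $P_{i-1,i} = a_i/\mu_{i-1}$ and $P_{i+1,i} = b_{i+1}/\mu_{i+1}$, and since $\nu_k \wedge \mu_k = (1 \wedge \rho_k)\mu_k$, the two off-diagonal terms in the $i$-th equation are $(\nu_{i-1} \wedge \mu_{i-1}) P_{i-1,i} = (1 \wedge \rho_{i-1}) a_i$ and $(\nu_{i+1} \wedge \mu_{i+1}) P_{i+1,i} = (1 \wedge \rho_{i+1}) b_{i+1}$. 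Incorporating the convention $\rho_0 = \rho_{N+1} = 1$ together with the boundary arrival rates $\lambda_1 = a_1$, $\lambda_N = b_{N+1}$ from~\eqref{eq:lambda-def} (and $\lambda_1 = a_1 + b_2$ when $N=1$, where $P$ is the $1 \times 1$ zero matrix), a short case check shows that the $i$-th equation of~\eqref{eq:general-balance} is, uniformly over $i \in [N]$,
\[ \nu_i = (1 \wedge \rho_{i-1}) a_i + (1 \wedge \rho_{i+1}) b_{i+1} ; \]
that is, the $\lambda$-terms are exactly the contributions of the always-congested ``virtual'' boundary gaps $0$ and $N+1$. Writing $l_i := (1 \wedge \rho_i) b_i$ for $1 \leq i \leq N+1$ and $r_i := (1 \wedge \rho_i) a_{i+1}$ for $0 \leq i \leq N$ (consistently, since $\rho_0 = \rho_{N+1} = 1$ forces $r_0 = a_1$ and $l_{N+1} = b_{N+1}$), the displayed equation becomes $\nu_i = r_{i-1} + l_{i+1}$ for $i \in [N]$, while definition~\eqref{eq:speed-per-particle} reads $v_i = l_i - r_{i-1}$ for every $i \in [N+1]$.

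With these two relations in hand, for $i \in [N]$,
\[ v_{i+1} - v_i = (l_{i+1} - r_i) - (l_i - r_{i-1}) = (r_{i-1} + l_{i+1}) - (l_i + r_i) = \nu_i - (\nu_i \wedge \mu_i) , \]
using $\nu_i = r_{i-1} + l_{i+1}$ and $l_i + r_i = (1 \wedge \rho_i)(b_i + a_{i+1}) = (1 \wedge \rho_i)\mu_i = \nu_i \wedge \mu_i$. Since $\mu_i = b_i + a_{i+1} > 0$ under~\eqref{ass:fill-empty}, this gives $v_{i+1} - v_i = (\nu_i - \mu_i)^+ = \mu_i(\rho_i - 1)^+ = (\rho_i - 1)^+ (b_i + a_{i+1})$, which is~\eqref{eq:speed-neighbours}. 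The final two statements then follow: if $\rho_i \leq 1$ the right-hand side vanishes, so $v_{i+1} = v_i$; if $\rho_i > 1$, then $(\rho_i - 1)^+ > 0$ and $\mu_i > 0$ force $v_{i+1} > v_i$; finiteness of every $v_i$ is immediate as all rates are finite.

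The only point needing care --- and the nearest thing to an obstacle here --- is purely clerical: checking that the exogenous rates $\lambda_1$ and $\lambda_N$ (and the combined $\lambda_1$ in the degenerate case $N=1$) are absorbed precisely by the virtual boundary flows $r_0$ and $l_{N+1}$, so that the flow-balance identity $\nu_i = r_{i-1} + l_{i+1}$ genuinely holds for every $i \in [N]$, endpoints included. Given that verification, everything else is the one-line telescoping above.
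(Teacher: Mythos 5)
Your proposal is correct and follows essentially the same route as the paper: both rewrite the general traffic equation componentwise as the flow-balance identity $\mu_i\rho_i=(1\wedge\rho_{i-1})a_i+(1\wedge\rho_{i+1})b_{i+1}$ (your $\nu_i=r_{i-1}+l_{i+1}$) and then obtain $v_{i+1}-v_i=\mu_i(\rho_i-(1\wedge\rho_i))=(\rho_i-1)^+(b_i+a_{i+1})$ by substituting the definition of $v_i$; your $l_i,r_i$ bookkeeping is just a tidier packaging of the paper's direct substitution. Your explicit verification of the boundary cases ($i=1$, $i=N$, and $N=1$) is carried out only implicitly in the paper, so no gap there.
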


Here is our result on existence of speeds that underlies much of our analysis.

\begin{theorem}
\label{thm:speeds}
		Let $N \in \N$ and suppose that~\eqref{ass:fill-empty} holds. 
	Then there exist $- \infty < v_1 \leq \cdots \leq v_{N+1} < \infty$ such that
\begin{equation}
\label{eq:speeds-exist}
 \lim_{t \to \infty} \frac{X_i (t)}{t} = v_i, \as , \text{ for every } i \in [N+1]. \end{equation}
Moreover,
if $\nu = (\nu_i)_{i \in [N]}$ is the unique solution to the
general traffic equation~\eqref{eq:general-balance}, and $\rho_i := \nu_i / \mu_i$, 
then the $v_i$ in~\eqref{eq:speeds-exist} are the quantities defined by~\eqref{eq:speed-per-particle}.
 \end{theorem}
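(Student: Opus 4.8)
The plan is to decompose each coordinate $X_i(t)$ as a martingale plus a time-integral of occupation indicators, and then feed in the ergodic averages supplied by Proposition~\ref{prop:general-Jackson}. Throughout, adopt the convention $\eta_0(t) \equiv \eta_{N+1}(t) \equiv +\infty$, which encodes that the leftmost particle always has an empty site to its left and the rightmost an empty site to its right, so that $\1{\eta_0(s) \geq 1} = \1{\eta_{N+1}(s) \geq 1} = 1$. Since particle~$i$ jumps right at instantaneous rate $b_i\1{\eta_i(s)\geq 1}$ and left at rate $a_i\1{\eta_{i-1}(s)\geq 1}$, the usual compensation argument for the counting processes of its left and right jumps yields
\[
X_i(t) - X_i(0) = M_i(t) + \int_0^t \bigl( b_i \1{\eta_i(s) \geq 1} - a_i \1{\eta_{i-1}(s) \geq 1} \bigr) \ud s, \text{ for } i \in [N+1],
\]
where $M_i$ is a mean-zero martingale whose jumps have magnitude~$1$ and whose predictable quadratic variation is bounded by $(a_i + b_i) t$. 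Because the total jump count of particle~$i$ by time~$t$ is stochastically dominated by a Poisson process of rate $a_i + b_i$, a routine law of large numbers for martingales with bounded jumps and linearly-growing quadratic variation (Doob's maximal inequality along a geometric subsequence of integer times, together with control of the within-interval increments by the Poisson bound) gives $t^{-1} M_i(t) \to 0$ a.s.

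Next I would show that, for every $i \in [N]$,
\[
\lim_{t\to\infty} \frac1t \int_0^t \1{\eta_i(s) \geq 1} \ud s = 1 \wedge \rho_i , \text{ a.s.},
\]
the analogous statement holding trivially (with value~$1$) for $i \in \{0, N+1\}$. If $i \in U$, so $\rho_i \geq 1$, then~\eqref{eq:mass-escape} with $B = 0$ gives $t^{-1}\int_0^t \1{\eta_i(s) = 0} \ud s \to 0$ a.s., hence the limit is $1 = 1 \wedge \rho_i$. If $i \in S$, so $\rho_i < 1$, I would marginalize the a.s.\ time-average convergence in~\eqref{eq:general-Jackson-limit}: for any finite $G \subseteq \{ z \in \ZP^{|S|} : z_i = 0\}$ one has $\1{\eta_i(s) = 0} \geq \1{(\eta_j(s))_{j \in S} \in G}$, while for any finite $G' \subseteq \{ z \in \ZP^{|S|} : z_i \geq 1\}$ one has $\1{\eta_i(s) = 0} \leq 1 - \1{(\eta_j(s))_{j \in S} \in G'}$; applying~\eqref{eq:general-Jackson-limit} to each finite cylinder and letting $G, G'$ exhaust their respective sets — the total mass of the limiting product-geometric law being~$1$, with the one-sided tails controlled by the uniform bound~\eqref{eq:eta-bound} — squeezes $t^{-1}\int_0^t \1{\eta_i(s) = 0} \ud s \to 1 - \rho_i$ a.s., so $t^{-1}\int_0^t \1{\eta_i(s) \geq 1} \ud s \to \rho_i = 1 \wedge \rho_i$.

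Substituting these two limits into the displayed identity for $X_i$ gives, for every $i \in [N+1]$,
\[
\lim_{t\to\infty} \frac{X_i(t)}{t} = \bigl( 1 \wedge \rho_i \bigr) b_i - \bigl( 1 \wedge \rho_{i-1} \bigr) a_i , \text{ a.s.},
\]
with the convention $\rho_0 = \rho_{N+1} = 1$; this is precisely the quantity $v_i$ defined by~\eqref{eq:speed-per-particle}. Finiteness $|v_i| \leq a_i + b_i < \infty$ is immediate, and the ordering $-\infty < v_1 \leq \cdots \leq v_{N+1} < \infty$ follows from $v_{i+1} - v_i = (\rho_i - 1)^+ (b_i + a_{i+1}) \geq 0$, which is~\eqref{eq:speed-neighbours} in Lemma~\ref{lem:speeds-algebra}. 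I expect the only substantive point to be the middle step — transferring the joint ergodic and escape statements of Proposition~\ref{prop:general-Jackson} to the single-site occupation fraction $t^{-1}\int_0^t \1{\eta_i(s) \geq 1}\,\ud s$; the martingale law of large numbers is standard.
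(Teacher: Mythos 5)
Your proposal is correct and follows essentially the same route as the paper's proof: the same compensated-Poisson decomposition of $X_i$, the same appeal to Proposition~\ref{prop:general-Jackson} to identify $t^{-1}\int_0^t \1{\eta_i(s)\geq 1}\,\ud s \to 1\wedge\rho_i$, and the same Doob-plus-geometric-subsequence argument for $t^{-1}M_i(t)\to 0$. The only cosmetic differences are that you spell out the marginalization/squeeze for the single-site occupation fraction (which the paper compresses into one display) and deduce the monotonicity of the $v_i$ from Lemma~\ref{lem:speeds-algebra} rather than from the pathwise ordering $X_1(t)<\cdots<X_{N+1}(t)$; both are valid.
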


We first prove the lemma.

\begin{proof}[Proof of Lemma~\ref{lem:speeds-algebra}.]
Suppose that $(\nu_i)_{i \in [N]}$ solves $\nu = ( \nu \wedge \mu ) P + \lambda$.
Write $\rho_i := \nu_i / \mu_i$, and take $\rho_0 = \rho_{N+1} = 1$.
Using the expressions~\eqref{eq:mu-def}, \eqref{eq:lambda-def}, and~\eqref{eq:P-def}, the general traffic equation~\eqref{eq:general-balance} can then be expressed in terms of the $\rho_i$, $a_i$, and $b_i$ as 
\begin{equation}
\label{eq:balance-rho}
 (b_i + a_{i+1} ) \rho_i = (1 \wedge \rho_{i-1} ) a_i + (1 \wedge \rho_{i+1} ) b_{i+1} , \text{ for all } i \in [N] .\end{equation}
Using~\eqref{eq:speed-per-particle} twice in~\eqref{eq:balance-rho}, we obtain, for $i \in [N]$, 
\[ (b_i + a_{i+1} ) \rho_i = (1 \wedge \rho_{i-1} ) a_i + v_{i+1} + ( 1 \wedge \rho_i ) a_{i+1}
= (1 \wedge \rho_i ) b_i  + ( 1 \wedge \rho_i ) a_{i+1} + v_{i+1} - v_i .\]
This yields~\eqref{eq:speed-neighbours}, and hence the final sentence of the lemma.
\end{proof}

\begin{proof}[Proof of Theorem~\ref{thm:speeds}.]
Suppose that $(\nu_i)_{i \in [N]}$ solves $\nu = ( \nu \wedge \mu ) P + \lambda$.
Write $\rho_i := \nu_i / \mu_i$. 
Fix $i \in [N+1]$. Let $N^-_i (t)$ and $N^+_i (t)$, $t \in \RP$, be two independent homogeneous Poisson processes, of rates $a_i$ and $b_i$, respectively.
For all $t \in \RP$, set $\eta_0 (t) := +\infty$ and $\eta_{N+1} (t) := +\infty$.
Then we have the stochastic integral representation
\begin{equation}
\label{eq:poisson-calculus}
 X_i (t) - X_i (0) = \int_0^t \1 { \eta_{i} (s-) \geq 1 } \ud N^+_i (s) -  \int_0^t \1 { \eta_{i-1} (s-) \geq 1 } \ud N^-_i (s) ,\end{equation}
since the attempted jumps are suppressed if the corresponding `queues' are empty. Here, as usual $\eta_i (s- ) := \lim_{u \uparrow s} \eta_i (u)$ for $s >0$, and
$\eta_i (0-) := \eta_i (0)$.

Write $M_i^+ (t) := N_i^+ (t) - b_i t$ and $M_i^-(t) := N_i^- (t) - a_i t$, $t \in \RP$.
Then $M_i^\pm$ are continuous-time, square-integrable martingales, and we can re-write~\eqref{eq:poisson-calculus} as
\begin{align}
\label{eq:x-representation}
  X_i (t) - X_i (0) & = b_i \int_0^t \1 { \eta_{i} (s-) \geq 1 } \ud s -  a_i \int_0^t \1 { \eta_{i-1} (s-) \geq 1 } \ud s  \nonumber\\
	& {}  + \int_0^t \1 { \eta_{i} (s-) \geq 1 } \ud M^+_i (s)  -  \int_0^t \1 { \eta_{i-1} (s-) \geq 1 } \ud M^-_i (s) .\end{align}
	Recall from the statement of Proposition~\ref{prop:general-Jackson} that $S$ denotes the set of $i \in [N]$ such that $\rho_i < 1$. 
	Then we have from Proposition~\ref{prop:general-Jackson} 
	that, for $i \in [N]$, a.s.,
	\begin{align}
	\label{eq:blocking-ergodicity}
	\lim_{t\to\infty} \frac{1}{t} \int_0^t \1 { \eta_{i} (s-) \geq 1 } \ud s & = 1 - 	\lim_{t\to\infty} \frac{1}{t} \int_0^t \1 { \eta_{i} (s) = 0  } \ud s \nonumber\\
& = 	1 - (1- \rho_{i} ) \1 { i \in S} = 1 \wedge \rho_i.
	\end{align}
	Note that~\eqref{eq:blocking-ergodicity} also holds, trivially, for $i = 0, N+1$.
	Let $H^+_i (t) := \1 { \eta_{i} (t-) \geq 1 } = 1 \wedge \eta_i (t-)$. Then $H_i^+$ is left continuous with right limits, and hence~\cite[p.~63]{protter} 
	$Y_i^+ := H_i^+ \cdot M^+_i$ given by $Y_i^+ (t) := \int_0^t H_i^+ (s) \ud M^+_i(s)$ is  a right-continuous local martingale. Its quadratic
	variation process $[ Y_i^+]$ satisfies
	\[ [ Y_i^+  ]_t = \int_0^t (H^+_i (s))^2 \ud [ M^+_i ]_s = \int_0^t H^+_i (s)  \ud N^+_i (s) \leq N^+_i (t), \text{ for } t \in \RP, \]
using~\cite[pp.~75--76]{protter}, with the fact that $[ M^+_i]_t = [N^+_i]_t = N^+_i (t)$ is the quadratic variation of the (compensated) Poisson process~\cite[p.~71]{protter}.
Hence $\IE [ (Y^+_i (t))^2 ] = \IE ( [ Y^+_i]_t ) \leq \IE N^+_i (t) = b_i t$, so $(Y_i^+)^2$ is a non-negative, right-continuous submartingale.
By an appropriate maximal inequality~\cite[p.~13]{ks}, for any $p >1$,
\begin{equation}
\label{eq:doob-inequality}
 \IP \left[ \sup_{0 \leq s \leq t} \vert { Y^+_i (s) } \vert \geq t^{p/2} \right] \leq t^{-p} \IE [ (Y_i^+ (t))^2 ]  \leq b_i t^{1-p} , \text{ for all }   t \in (0, \infty). \end{equation}
Applying the Borel--Cantelli lemma with~\eqref{eq:doob-inequality} along subsequence $t = t_n := 2^n$, it follows that, a.s., for all but finitely many $n \in \ZP$,
$\sup_{0 \leq s \leq 2^n} \vert { Y^+_i (s) } \vert \leq 2^{np/2}$. Every $t \geq 1$ has $2^n \leq t < 2^{n+1}$ for $n = n(t) \in \ZP$, and so, a.s., for all $t \in \RP$ sufficiently large,
\[ \sup_{0 \leq s \leq t} \vert { Y^+_i (s) } \vert \leq \sup_{0 \leq s \leq 2^{n+1}} \vert { Y^+_i (s) } \vert \leq 2^{(n+1)p/2} \leq 2^{p/2} \cdot t^{p/2} .\]
It follows that, for any $p' > p >1$, $t^{-p'/2} \vert { Y^+_i (t) } \vert \to 0$, a.s., as $t \to \infty$.
Together with the analogous argument involving $M_i^-$, we have thus shown that, a.s., 
	\begin{equation}
		\label{eq:martingale-control}
		\lim_{t\to\infty} \frac{1}{t} \!\int_0^t\! \1 { \eta_{i} (s-) \geq 1 } \ud M^+_i (s) =  \lim_{t\to\infty} \frac{1}{t} \!\int_0^t\!  \1 { \eta_{i-1} (s-) \geq 1 } \ud M^-_i (s) = 0. \end{equation}
		Combining~\eqref{eq:x-representation} with~\eqref{eq:blocking-ergodicity} and~\eqref{eq:martingale-control},
we conclude that $X_i(t) /t \to v_i$, a.s., where $v_i \in \R$ is given by~\eqref{eq:speed-per-particle}. That
	$v_1 \leq v_2 \leq \cdots \leq v_{N+1}$
 follows from the fact that $X_1(t) < X_2 (t) < \cdots < X_{N+1}(t)$
 for all~$t \in \RP$. This completes the proof of~\eqref{eq:speeds-exist}, with the $v_i$ given by~\eqref{eq:speed-per-particle}, establishing the theorem.
	\end{proof}
	
A consequence of Proposition~\ref{prop:general-Jackson} is that $S = [N]$ (i.e., the system is stable)
if and only if
the (unique) solution $\nu$ to~\eqref{eq:general-balance} satisfies
 $\rho_i := \nu_i / \mu_i < 1$ for every $i \in [N]$. 
Any such $\nu$ thus
 solves also $\nu = \nu P + \lambda$, i.e.,
\begin{equation}
\label{eq:stable-balance}
\nu ( I - P ) = \lambda, 
\end{equation}
where $I$ is the $N$ by $N$ identity matrix. We call~\eqref{eq:stable-balance} the \emph{stable traffic equation};
note that unlike the general traffic equation~\eqref{eq:general-balance}, the system~\eqref{eq:stable-balance} is linear. 
The system~\eqref{eq:stable-balance} in fact classifies whether or not the system is stable, as the following result shows.
Since at this point we invoke the formulas~\eqref{eq:rho-stable} and~\eqref{eq:v-stable}, we now need to assume~\eqref{ass:positive-rates}.

\begin{proposition}
\label{prop:stable-Jackson}
	Let $N \in \N$ and suppose that~\eqref{ass:positive-rates} holds.
The stable traffic equation~\eqref{eq:stable-balance} has a unique solution $\nu = \lambda (I-P)^{-1}$, which we denote by $\nu = (\nu_i)_{i \in [N]}$.
Write $\rho_i := \nu_i / \mu_i$. Then the following hold.
\begin{thmenumi}[label=(\roman*)]
\item 
\label{prop:stable-Jackson-i}
The process~$\eta$ is stable if and only if
$\rho_i < 1$ for all $i \in [N]$. Equivalently, the process is stable if and only if
$\lambda (I -P)^{-1} < \mu$, componentwise.
\item 
\label{prop:stable-Jackson-ii}
The $\rho_i$ are given by the explicit formula~\eqref{eq:rho-stable}, 
where $\hv_{N+1}$ is given by~\eqref{eq:v-stable}.
\item
\label{prop:stable-Jackson-iii}
If $\rho_i <1$ for all $i \in [N]$, then the quantity $\hv_{N+1}$ given by~\eqref{eq:v-stable} specifies the speed of the cloud, via
$\lim_{t\to \infty} t^{-1} X_i (t) = \hv_{N+1}$, a.s., for all $i \in [N+1]$.
\end{thmenumi}
\end{proposition}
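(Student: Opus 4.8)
The plan is to derive all three parts by combining Proposition~\ref{prop:general-Jackson}, Lemma~\ref{lem:speeds-algebra} and Theorem~\ref{thm:speeds} with a single explicit computation solving the now-linear equation~\eqref{eq:stable-balance}. First I would record existence and uniqueness: under~\eqref{ass:positive-rates} we have $b_1>0$, hence $Q_1 = b_1/(b_1+a_2) > 0$, so $P$ is an irreducible substochastic matrix with a row sum strictly less than~$1$, $P^k \to 0$ as $k\to\infty$, and $I-P$ is an invertible M-matrix (exactly as argued in the proof of Proposition~\ref{prop:general-Jackson}, via Lemma~7.1 of~\cite{ChenYao}). Thus~\eqref{eq:stable-balance} has the unique solution $\nu = \lambda(I-P)^{-1}$, and I set $\rho_i := \nu_i/\mu_i$. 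For part~\ref{prop:stable-Jackson-i}, let $\nu^\sharp$ be the unique solution of the general equation~\eqref{eq:general-balance} from Proposition~\ref{prop:general-Jackson} and $\rho^\sharp_i := \nu^\sharp_i/\mu_i$, recalling that the process is stable precisely when $\rho^\sharp_i < 1$ for every $i$. If $\rho_i < 1$ for all $i$, then $\nu\wedge\mu = \nu$, so $\nu$ also solves~\eqref{eq:general-balance}; uniqueness gives $\nu = \nu^\sharp$, whence all $\rho^\sharp_i < 1$ and the process is stable. Conversely, if the process is stable then all $\rho^\sharp_i < 1$, so $\nu^\sharp\wedge\mu = \nu^\sharp$ and $\nu^\sharp$ solves the linear equation~\eqref{eq:stable-balance}; uniqueness gives $\nu^\sharp = \nu$, so all $\rho_i < 1$. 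The reformulation $\lambda(I-P)^{-1} < \mu$ is just $\nu_i < \mu_i$ for all $i$.

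For part~\ref{prop:stable-Jackson-ii} I would rewrite~\eqref{eq:stable-balance} in the $\rho$-variables. Substituting~\eqref{eq:mu-def}, \eqref{eq:lambda-def}, \eqref{eq:P-def} and $\nu_i = \rho_i\mu_i$ into $\nu = \nu P + \lambda$, and adopting the convention $\rho_0 = \rho_{N+1} = 1$, the equation becomes
\[ (b_i + a_{i+1})\rho_i = \rho_{i-1} a_i + \rho_{i+1} b_{i+1}, \qquad i \in [N], \]
i.e.\ \eqref{eq:balance-rho} with each factor $1\wedge\rho$ replaced by $\rho$. Imitating the algebra in the proof of Lemma~\ref{lem:speeds-algebra}, I would set $w_i := \rho_i b_i - \rho_{i-1}a_i$ for $i \in [N+1]$; the displayed identity rearranges to $w_{i+1} = w_i$ for every $i \in [N]$, so $w_i \equiv w$ for a single constant~$w$. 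This turns the balance relations into the first-order recursion $\rho_i = b_i^{-1}(w + a_i\rho_{i-1})$ for $i \in [N+1]$ (here $b_i > 0$ is used, which is exactly why~\eqref{ass:positive-rates} rather than merely~\eqref{ass:fill-empty} is assumed). Unrolling from $\rho_0 = 1$ reproduces the right-hand side of~\eqref{eq:rho-stable} with $\hv_{N+1}$ replaced by~$w$, and then the instance $i = N+1$, using $\rho_{N+1} = 1$, solves for $w$ and yields exactly the expression~\eqref{eq:v-stable} for $\hv_{N+1}$ (its denominator being positive since all $b_i > 0$). This establishes part~\ref{prop:stable-Jackson-ii}.

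For part~\ref{prop:stable-Jackson-iii}, if $\rho_i < 1$ for all $i \in [N]$ then by part~\ref{prop:stable-Jackson-i} the process is stable and $\nu = \nu^\sharp$, so Theorem~\ref{thm:speeds} applies and $\lim_{t\to\infty} t^{-1} X_i(t) = v_i$ a.s.\ with $v_i$ given by~\eqref{eq:speed-per-particle}; since $1\wedge\rho_j = \rho_j$ throughout (with $\rho_0 = \rho_{N+1} = 1$), we get $v_i = \rho_i b_i - \rho_{i-1}a_i = w_i \equiv w = \hv_{N+1}$ by the previous paragraph, so every particle has asymptotic speed $\hv_{N+1}$. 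Overall there is no difficulty of principle: parts~\ref{prop:stable-Jackson-i} and~\ref{prop:stable-Jackson-iii} are bookkeeping on top of results already proved, and the only genuine computation is the telescoping of the recursion in part~\ref{prop:stable-Jackson-ii} together with the verification that it matches the explicit formulas~\eqref{eq:rho-stable}--\eqref{eq:v-stable}; the one point requiring care is to keep the two solutions --- $\nu$ of~\eqref{eq:stable-balance} and $\nu^\sharp$ of~\eqref{eq:general-balance} --- clearly separated when invoking uniqueness.
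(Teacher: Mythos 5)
Your proposal is correct and follows essentially the same route as the paper: existence and uniqueness via the M-matrix property of $I-P$, part~\ref{prop:stable-Jackson-i} by matching the solution of~\eqref{eq:stable-balance} with the unique solution of~\eqref{eq:general-balance} from Proposition~\ref{prop:general-Jackson}, and part~\ref{prop:stable-Jackson-iii} from Theorem~\ref{thm:speeds} and~\eqref{eq:speed-per-particle}. The only (cosmetic) difference is in part~\ref{prop:stable-Jackson-ii}, where the paper solves the second-order recursion~\eqref{eq:balance-rho-stable} by exhibiting the basis $\alpha,\beta$ of its two-dimensional solution space, while you reduce it to a first-order recursion via the conserved quantity $w_i=\rho_i b_i-\rho_{i-1}a_i$ (the same quantity as in Lemma~\ref{lem:speeds-algebra}); both yield~\eqref{eq:rho-stable}--\eqref{eq:v-stable}.
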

\begin{proof}
Part~\ref{prop:stable-Jackson-i} is essentially just a specialization of the characterization of stable Jackson networks (which goes back to Jackson~\cite{jackson57,jackson63}; see e.g.~\cite[\S 2.1]{ChenYao} or~\cite[\S 3.5]{FMM}) to our setting;
we give a proof using Proposition~\ref{prop:general-Jackson}.
Consider the stable traffic equation~\eqref{eq:stable-balance},
where $\lambda$ and $P$ are given by~\eqref{eq:lambda-def} and~\eqref{eq:P-def} respectively.
The matrix $I-P$ is an \emph{M-matrix}, and hence invertible (see Lemma~7.1 of~\cite{ChenYao}) and hence the solution $\nu = \lambda (I-P)^{-1} $
of~\eqref{eq:stable-balance} exists and is unique. 
Proposition~\ref{prop:general-Jackson}
says that if the system is stable, then there is a solution to~\eqref{eq:stable-balance}
with $\nu_i < \mu_i$ for all $i$. Conversely, if there exists a solution to~\eqref{eq:stable-balance}
for which $\nu_i < \mu_i$ for all $i \in [N]$, this solution is necessarily the (unique) solution to the general traffic equation~\eqref{eq:general-balance},
and the system is stable, by Proposition~\ref{prop:general-Jackson}. 
This argument proves part~\ref{prop:stable-Jackson-i}.

In terms of the loads $\rho_i = \nu_i / \mu_i$,   the stable traffic equation~\eqref{eq:stable-balance} reads
\begin{equation}
\label{eq:balance-rho-stable}
 (b_i+a_{i+1}) \rho_i = a_i\rho_{i-1}+b_{i+1} \rho_{i+1}, \text{ for } i \in [N],
\end{equation}
where we impose the boundary condition $\rho_0 = \rho_{N+1} = 1$.
Consider first the system~\eqref{eq:balance-rho-stable} without any boundary condition; then, the solutions $(\rho_0,\rho_1,\ldots,\rho_{N+1})$ to~\eqref{eq:balance-rho-stable} form a linear 
subspace of $\R^{N+2}$, and this subspace is two-dimensional 
because $\rho_0$ and $\rho_1$
 uniquely determine the rest. We identify a basis for this solution space. 
Define vectors 
$\alpha =(\alpha_0, \alpha_1, \ldots, \alpha_{N+1})$
and 
$\beta = (\beta_0,\beta_1, \ldots, \beta_{N+1})$
by $\alpha_0 := 1$, $\beta_0:=0$, and, for $k \in [N+1]$, 
\begin{align*}
 \alpha_k : = \frac{a_1\ldots a_k}{b_1\ldots b_k}, \text{ and }  \beta_k :=\frac{1}{b_k}+\frac{a_k}{b_kb_{k-1}}+   \cdots + \frac{a_k\ldots a_2}{b_k\ldots b_1}.
\end{align*}
Note that in the notation defined at~\eqref{eq:alpha-beta-def}, $\alpha_k = \alpha(1;k)$ and $\beta_k = \beta(1;k)$. 
The  vectors $\alpha,\beta \in \R^{N+2}$ are linearly
independent (because one is strictly positive and the other
is not). Moreover, it is straightforward to check that both $\alpha$ and $\beta$
solve the system~\eqref{eq:balance-rho-stable}: this is familiar from the
usual solution to the difference equations associated with the general gambler's ruin problem~\cite[pp.~106--108]{kt}.
Then, since $\rho_0=1$, 
any solution of~\eqref{eq:balance-rho-stable} with the given boundary conditions must have the form 
$\rho_i = \alpha_i + \hv_{N+1} \beta_i$ for some $\hv_{N+1}\in\R$, which is precisely~\eqref{eq:rho-stable}. Using the condition $\rho_{N+1}=1$, we find that
$\hv_{N+1}$ must be given by~\eqref{eq:v-stable}. This proves part~\ref{prop:stable-Jackson-ii}.

Suppose that $\rho_i < 1$ for every $i \in [N+1]$. Then, by part~\ref{prop:stable-Jackson-i}, the system is stable, and Theorem~\ref{thm:speeds}
shows that $\lim_{t \to \infty} t^{-1} X_i (t) = v_i$, a.s., where, by~\eqref{eq:speed-per-particle}, 
\[ v_1 = \rho_1 b_1 - a_1; ~~ v_i = \rho_i b_i - \rho_{i-1} a_i, \text{ for } 2 \leq i \leq N; ~~ v_{N+1} = b_{N+1} - \rho_N a_N .\]
It follows from~\eqref{eq:speed-neighbours} that $v_1 = v_2 = \cdots = v_{N+1}$; that their common value is $v_1 = \hv_{N+1}$
given by~\eqref{eq:v-stable} 
follows from the $i=1$ case of~\eqref{eq:rho-stable}. This proves~\ref{prop:stable-Jackson-iii}.
\end{proof}

\section{Monotonicity, restriction, and mergers}
\label{sec:proofs}

In this section we study the relationship between the stability characterization of the whole system, in terms of appropriate traffic equations, 
as derived from the Jackson representation and presented in Propositions~\ref{prop:general-Jackson} and~\ref{prop:stable-Jackson} above,
with the traffic equations associated with certain sub-systems of the full system. This will allow us to characterize the maximal stable sub-systems,
and show that their characteristic parameters can be expressed in terms of their intrinsic parameters only,
and hence prove our main stability results, Theorems~\ref{thm:main} and~\ref{thm:algorithm}, and their corollaries presented in Section~\ref{sec:results}.

Let $I = [ \ell;m] \subseteq [N+1]$ be a discrete interval with $m \geq 2$;
recall that $I^\circ = [\ell; m-1]$ excludes the rightmost element.
Define $\lambda^I := (\lambda^I_{i})_{i \in I^\circ}$ by $\lambda^I_{\ell} := a_\ell + b_{\ell+1}$
if $m=2$ ($I^\circ$ is a singleton), and, if $m \geq 3$,
\begin{equation}
\label{eq:lambda-I-def}
\lambda^I_{\ell} := a_\ell, ~ \lambda^I_{\ell+m-2} := b_{\ell+m-1}, \text{ and }
\lambda^I_j := 0 \text{ for } \ell+1 \leq j \leq \ell+m-3.
\end{equation} 
Also, recalling the definition of $\mu_i$ from~\eqref{eq:mu-def}, define the  matrix $(P^I_{i,j})_{i,j \in I^\circ}$ by
\begin{equation}
\label{eq:P-I-def}
\begin{split}
P^I_{i,i-1} & := \frac{b_i}{\mu_i} = \frac{b_i}{b_i+a_{i+1}}, \text{ for } \ell+1 \leq i \leq \ell+m-2; \\
P^I_{i,i+1} & := \frac{a_{i+1}}{\mu_i} = \frac{a_{i+1}}{b_i+a_{i+1}}, \text{ for } \ell \leq i \leq \ell+m-3;
\end{split}
\end{equation}
with $P^I_{ij} := 0$ otherwise. Set $Q^I_i := 1 - \sum_{j \in I^\circ} P^I_{ij}$.
Define $\mu^I := (\mu_i )_{i \in I^\circ}$. 
Note that in the case $I = [N+1]$, $\lambda^{[N+1]} = \lambda$ and $P^{[N+1]} = P$
defined by~\eqref{eq:lambda-I-def} and~\eqref{eq:P-I-def}
coincide with the definitions given previously at~\eqref{eq:lambda-def} and~\eqref{eq:P-def}.
Given $I \subseteq [N+1]$, we call the system
\begin{equation}
\label{eq:reduced-balance}
\nu^I = (\nu^I \wedge \mu^I ) P^I + \lambda^I 
\end{equation}
the \emph{reduced} traffic equation corresponding to~$I$. For a solution  $\nu^I = (\nu^I_i )_{i \in I^\circ}$ to~\eqref{eq:reduced-balance},
we write $\rho^I (i) := \nu^I_i / \mu_i$ for all $i \in I^\circ$.
Then, similarly to~\eqref{eq:balance-rho}, the reduced traffic equation can be written in terms of $(\rho^I_i)_{i \in I^\circ}$ as
\begin{equation}
\label{eq:reduced-traffic-rhos}
( b_i + a_{i+1} ) \rho_i^I = \bigl( 1 \wedge \rho^I_{i-1} \bigr) a_i + \bigl( 1 \wedge \rho^I_{i+1} \bigr) b_{i+1}, \text{ for all } i \in I^\circ , \end{equation}
with the convention that $\rho^I_{\min I -1} = \rho^I_{\max I} = 1$.
Analogously to~\eqref{eq:speed-per-particle}, we then define
\begin{equation}
\label{eq:v-I-def} v^I_i := \bigl( 1 \wedge \rho^I_i  \bigr) b_i - \bigl( 1 \wedge \rho^I_{i-1} \bigr) a_i , \text{ for all } i \in I ,\end{equation}
again with the convention  $\rho^I_{\min I -1} = \rho^I_{\max I} = 1$. Note that $v^I$ satisfies an appropriate version
of the algebraic Lemma~\ref{lem:speeds-algebra}.

The next result concerns solutions of the reduced traffic equation~\eqref{eq:reduced-balance}.

\begin{lemma}
\label{lem:reduced-system}
	Let $N \in \N$ and suppose that~\eqref{ass:positive-rates} holds.
Let $I  \subseteq [N+1]$ be a discrete interval with $\vert I\vert   \geq 2$. 
There is a unique solution $\nu^I$ to~\eqref{eq:reduced-balance};
equivalently, there is a unique solution $\rho^I$ to~\eqref{eq:reduced-traffic-rhos}.
Moreover, if 
$\rho^I (j) \leq 1$ for all $j \in I^\circ$,
then $\rho^I = \hrho_I$ as defined at~\eqref{eq:rho-I-def}
and, for all $i \in I$, $v^I_i = \hv(I)$ with the definitions at~\eqref{eq:v-I-def} and~\eqref{eq:hv-def}.
\end{lemma}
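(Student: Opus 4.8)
The plan is to recognise that the reduced traffic equation~\eqref{eq:reduced-balance} attached to a discrete interval $I=[\ell;m]$ is literally an instance of the \emph{general} traffic equation~\eqref{eq:general-balance}, namely the one for the auxiliary $m$-particle system consisting of the particles with labels in $I$. Concretely, I would introduce the system of $m$ particles with rate parameters $(a'_k,b'_k):=(a_{\ell+k-1},b_{\ell+k-1})$, $k\in[m]$, and check that, under the relabelling $\ell+k-1\mapsto k$, the quantities $\lambda^I,\mu^I,P^I$ of~\eqref{eq:lambda-I-def}--\eqref{eq:P-I-def} become exactly the $\lambda,\mu,P$ prescribed for that system by~\eqref{eq:mu-def}--\eqref{eq:P-def}; the $m=2$ clause $\lambda^I_\ell=a_\ell+b_{\ell+1}$ matches the $N=1$ case of~\eqref{eq:lambda-def}, while the $m\ge3$ clause matches the $N\ge2$ case. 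Since~\eqref{ass:positive-rates} for $[N+1]$ passes to the sub-collection, and hence so does~\eqref{ass:fill-empty}, Proposition~\ref{prop:general-Jackson} applied to the auxiliary system yields existence and uniqueness of the solution $\nu^I$ of~\eqref{eq:reduced-balance}; the equivalence with uniqueness of $\rho^I$ solving~\eqref{eq:reduced-traffic-rhos} is then automatic, because the passage from~\eqref{eq:reduced-balance} to~\eqref{eq:reduced-traffic-rhos} in terms of $\rho^I_i=\nu^I_i/\mu_i$ is the same rewriting that takes~\eqref{eq:general-balance} to~\eqref{eq:balance-rho}, with the conventions $\rho^I_{\min I-1}=\rho^I_{\max I}=1$ matching the boundary conditions used there.

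For the ``moreover'' part, suppose $\rho^I(j)\le1$ for all $j\in I^\circ$. Then $\nu^I\wedge\mu^I=\nu^I$, so~\eqref{eq:reduced-balance} collapses to the linear system $\nu^I=\nu^I P^I+\lambda^I$, which is precisely the \emph{stable} traffic equation~\eqref{eq:stable-balance} for the auxiliary system. I would then invoke Proposition~\ref{prop:stable-Jackson}\ref{prop:stable-Jackson-ii} for that system — a purely algebraic statement — to conclude that its unique solution has $\rho^I$ given by the explicit formula~\eqref{eq:rho-stable} built from the rates $(a'_k,b'_k)$. Translating the products back to the original labels via~\eqref{eq:alpha-beta-def} and~\eqref{eq:hv-def} (so that $\prod_{u=1}^k a'_u/b'_u=\alpha(\ell;k)$, the corresponding sum equals $\beta(\ell;k)$, and the auxiliary speed equals $\hv(\ell;m)$), this says $\rho^I(\ell+k-1)=\alpha(\ell;k)+\beta(\ell;k)\,\hv(\ell;m)$ for $k\in[m-1]$, which is exactly $\hrho_I(\ell+k-1)$ in the notation of~\eqref{eq:rho-I-def}. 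Hence $\rho^I=\hrho_I$ on $I^\circ$.

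To identify the $v^I_i$, I would use the version of Lemma~\ref{lem:speeds-algebra} valid for the reduced system, as noted just before the statement: since $\rho^I(i)\le1$ for every $i\in I^\circ$, the analogue of~\eqref{eq:speed-neighbours} gives $v^I_{i+1}=v^I_i$ for each such $i$, so all the $v^I_i$, $i\in I$, coincide. To pin down their common value, evaluate~\eqref{eq:v-I-def} at $i=\min I=\ell$, where the convention gives $\rho^I_{\ell-1}=1$: thus $v^I_\ell=b_\ell\,\rho^I(\ell)-a_\ell$, and substituting $\rho^I(\ell)=\hrho_I(\ell)=\tfrac{a_\ell}{b_\ell}+\tfrac1{b_\ell}\hv(\ell;m)$ from the previous step yields $v^I_\ell=\hv(\ell;m)=\hv(I)$, as required.

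In short, the content of the lemma is just that ``a reduced system is a general system of fewer particles'', so Propositions~\ref{prop:general-Jackson} and~\ref{prop:stable-Jackson} and Lemma~\ref{lem:speeds-algebra} apply verbatim to the auxiliary system. The only step that needs genuine care — and the one I would regard as the (mild) main obstacle — is the bookkeeping in the first paragraph: verifying the parameter identification $\lambda^I,\mu^I,P^I\leftrightarrow\lambda,\mu,P$ including the boundary clauses and the $m=2$ edge case, and checking that the adopted $\rho^I_{\min I-1}=\rho^I_{\max I}=1$ conventions are consistent with the boundary conditions in the cited results.
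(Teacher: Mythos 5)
Your proof is correct and follows essentially the same route as the paper: the paper likewise obtains existence and uniqueness by noting that the reduced system is itself a Jackson-network traffic equation to which the Goodman--Massey results (as packaged in Proposition~\ref{prop:general-Jackson}) apply, and then, in the case $\rho^I(j)\le 1$ for all $j\in I^\circ$, observes that~\eqref{eq:reduced-balance} linearises to a reduced stable traffic equation and reruns the algebraic argument of Proposition~\ref{prop:stable-Jackson} (together with the reduced version of Lemma~\ref{lem:speeds-algebra}) to identify $\rho^I=\hrho_I$ and $v^I_i=\hv(I)$. Your explicit relabelling and verification of the parameter identification $\lambda^I,\mu^I,P^I\leftrightarrow\lambda,\mu,P$ simply spells out what the paper leaves implicit.
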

\begin{proof}
Existence and uniqueness of the solution to~\eqref{eq:reduced-balance} follows from the
results of~\cite{GM84}, exactly as 
existence and uniqueness of the solution to the general traffic equation~\eqref{eq:general-balance}
in Proposition~\ref{prop:general-Jackson}.
If $\rho^I (j) \leq 1$ for all $j \in I^\circ$, then~\eqref{eq:reduced-balance}
coincides with the system $\nu^I = \nu^I P^I + \lambda^I$,
a reduced version of the stable traffic equation~\eqref{eq:stable-balance}, 
 and the argument of 
Proposition~\ref{prop:stable-Jackson} implies that $\rho^I$ and $v^I$ satisfy the appropriate versions of~\eqref{eq:rho-stable} and~\eqref{eq:v-stable},
which establishes that $\rho^I = \hrho_I$ as defined at~\eqref{eq:rho-I-def} and $v^I_i = \hv(I)$ as defined at~\eqref{eq:hv-def}.
\end{proof}

Given a discrete interval $I \subseteq [N+1]$, we say $I$ is a \emph{candidate} stable cloud
if the solution to the reduced traffic equation~\eqref{eq:reduced-balance}, or, equivalently, the system~\eqref{eq:reduced-traffic-rhos}, gives $\rho^I_i < 1$ for all $i \in I^\circ$.
To facilitate the proof of Theorem~\ref{thm:algorithm}, verifying the correctness of Algorithm~\ref{alg:partition},
we need to identify when candidate stable clouds are genuine stable clouds. Here the key property is that a candidate stable cloud
is necessarily either a stable cloud, or a subset of a stable cloud; hence we need to test whether a candidate stable cloud is maximal,
or whether it can be extended to a larger candidate stable cloud. The next two lemmas present results in this direction.

Lemma~\ref{lem:reduced-system-consistency} is a consistency result, which shows, in particular,
 that the solution $\nu$ to the general traffic equation~\eqref{eq:general-balance},
when restricted to a stable cloud $\theta \in \Theta$, coincides with $\nu^\theta$, the solution to the reduced
  traffic equation~\eqref{eq:reduced-balance} for $I = \theta$.

\begin{lemma}
\label{lem:reduced-system-consistency}
	Let $N \in \N$ and suppose that~\eqref{ass:positive-rates} holds.
Let $I \subseteq [N+1]$ be a discrete interval with $\vert I\vert \geq 2$,
and let $I_0 \subseteq I$ be such that (i) either $\min I_0 = \min I$ or $\rho^I_{\min I_0 -1} \geq 1$, and (ii)
either $\max I_0 = \max I$ or $\rho^I_{\max I_0} \geq 1$.
Then $\nu^I_i = \nu^{I_0}_i$ for all $i \in I_0^\circ$.
\end{lemma}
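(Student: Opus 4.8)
The plan is to show that the restriction of $\rho^I$ to $I_0^\circ$ solves the reduced traffic equation~\eqref{eq:reduced-traffic-rhos} for the interval $I_0$, and then to conclude by the uniqueness part of Lemma~\ref{lem:reduced-system}. Write $I_0 = [\ell_0;m_0]$, so that $I_0^\circ = \{\ell_0, \ldots, \ell_0 + m_0 - 2\}$, $\min I_0 = \ell_0$ and $\max I_0 = \ell_0 + m_0 - 1$. Set $\tilde\rho_i := \rho^I_i$ for $i \in I_0^\circ$; this is well-defined since $I_0^\circ \subseteq I^\circ$, and the aim is to verify
\[
(b_i + a_{i+1})\tilde\rho_i = \bigl( 1 \wedge \tilde\rho_{i-1} \bigr) a_i + \bigl( 1 \wedge \tilde\rho_{i+1} \bigr) b_{i+1}, \text{ for all } i \in I_0^\circ ,
\]
with the convention $\tilde\rho_{\min I_0 - 1} = \tilde\rho_{\max I_0} = 1$.

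For each $i \in I_0^\circ$, the value $\rho^I_i$ satisfies the corresponding instance of~\eqref{eq:reduced-traffic-rhos} for $I$, with the convention $\rho^I_{\min I - 1} = \rho^I_{\max I} = 1$. Whenever both $i-1 \in I_0^\circ$ and $i+1 \in I_0^\circ$, the right-hand sides of the $I$- and $I_0$-equations at index $i$ are literally identical, because $\tilde\rho_{i\pm 1} = \rho^I_{i\pm 1}$. It therefore remains to treat the two indices at which a neighbour lies outside $I_0^\circ$, namely $i = \min I_0$ (left neighbour) and $i = \max I_0 - 1$ (right neighbour); if $m_0 = 2$ these coincide and both modifications apply to the single equation.

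Consider $i = \min I_0$. On the $I_0$-side the convention gives $1 \wedge \tilde\rho_{i-1} = 1$, so the relevant term equals $a_i$, whereas on the $I$-side it equals $(1 \wedge \rho^I_{\min I_0 - 1}) a_i$. By hypothesis~(i), either $\min I_0 = \min I$, in which case $\rho^I_{\min I_0 - 1} = \rho^I_{\min I - 1} = 1$ by the $I$-convention, or $\rho^I_{\min I_0 - 1} \geq 1$; in both cases $1 \wedge \rho^I_{\min I_0 - 1} = 1$, so the terms agree. The index $i = \max I_0 - 1$ is handled symmetrically via hypothesis~(ii): the $I_0$-term is $b_{i+1}$, the $I$-term is $(1 \wedge \rho^I_{\max I_0}) b_{i+1}$, and either $\max I_0 = \max I$ (so $\rho^I_{\max I_0} = 1$) or $\rho^I_{\max I_0} \geq 1$, whence $1 \wedge \rho^I_{\max I_0} = 1$ again. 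Thus $\tilde\rho$ solves the reduced traffic equation for $I_0$.

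Finally, by uniqueness in Lemma~\ref{lem:reduced-system}, $\tilde\rho = \rho^{I_0}$, and hence $\nu^I_i = \mu_i \rho^I_i = \mu_i \rho^{I_0}_i = \nu^{I_0}_i$ for all $i \in I_0^\circ$, as required. I do not expect any serious obstacle: the whole argument is bookkeeping with the endpoint conventions of the two reduced systems, and the content of hypotheses~(i)--(ii) is precisely what makes the boundary terms of the two equations coincide.
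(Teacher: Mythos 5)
Your proof is correct and follows essentially the same route as the paper's: both verify that the restriction of $\rho^I$ to $I_0^\circ$ satisfies the reduced traffic equation over $I_0$, using hypotheses~(i)--(ii) to make the boundary terms $1 \wedge \rho^I_{\min I_0 -1}$ and $1 \wedge \rho^I_{\max I_0}$ equal to~$1$, and then conclude by the uniqueness statement in Lemma~\ref{lem:reduced-system}. Your explicit treatment of the case $m_0 = 2$, where the two boundary modifications apply to the same equation, is a small tidiness the paper leaves implicit.
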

\begin{proof}
Consider the solution $\nu^I$ to the reduced traffic equation~\eqref{eq:reduced-balance} over~$I$,
and let $\rho^I_i := \nu_i^I / \mu_i$. 
Since $\nu^I$ solves~\eqref{eq:reduced-balance} for $I$, we have from~\eqref{eq:reduced-traffic-rhos} that
 the $\rho^I_i$, $i \in I_0^\circ$, satisfy
\begin{equation}
\label{eq:rho-I-I0}
 (b_i + a_{i+1} ) \rho^{I}_i = \bigl( 1 \wedge \rho^I_{i-1} \bigr) a_i + \bigl( 1 \wedge \rho^I_{i+1} \bigr) b_{i+1} , \text{ for all } i \in I_0^\circ, \end{equation}
with boundary conditions $\rho^{I}_{\min I_0-1} = 1$ (because either $\min I_0 = \min I$,
in which case $\rho^{I}_{\min I_0-1} = \rho^{I}_{\min I-1} = 1$, the left boundary condition for~\eqref{eq:reduced-traffic-rhos} over~$I$, 
or $\min I_0 > \min I$ and $\rho^I_{\min I_0-1} \geq 1$, so $1 \wedge \rho^I_{\min I_0 - 1} = 1$) and $\rho^I_{\max I_0} = 1$ (analogously).
In other words, the $\rho^I_i$ solve exactly the  system~\eqref{eq:reduced-traffic-rhos}  over $I = I_0$,
which is also the system solved by the $\rho^{I_0}_i$.
By uniqueness of solutions to the reduced traffic equation~\eqref{eq:reduced-balance} (see Lemma~\ref{lem:reduced-system}), it follows that $\nu^I_i = \nu^{I_0}_i$ for all $i \in I_0^\circ$. 
\end{proof}

Lemma~\ref{lem:stable-extension} gives conditions when a candidate stable cloud is
further stabilized by the external system, and will allow us to extend a candidate stable cloud either to the left or to the right.
The proof uses similar restriction ideas to the proof of Lemma~\ref{lem:reduced-system-consistency}.

\begin{lemma}
\label{lem:stable-extension}
	Let $N \in \N$ and suppose that~\eqref{ass:positive-rates} holds.
	Let $I \subseteq [N+1]$ be a discrete interval with $\vert I\vert \geq 2$,
and let $I_0 \subseteq I$ be such that $\rho^{I_0}_i \leq 1$ for all $i \in I_0^\circ$.
\begin{itemize}
\item[(i)] Suppose that $\min I_0 = \min I$, $\max I_0 < \max I$, and $\rho^I_{\max I_0} < 1$.
Then $v_i^I < v_i^{I_0}$ for all $i \in I^\circ_0$.
\item[(ii)] Suppose that $\min I_0 > \min I$, $\max I_0 = \max I$, and $\rho^I_{\min I_0 - 1} < 1$.
Then $v_i^I > v_i^{I_0}$ for all $i \in I^\circ_0$.
\end{itemize}
Moreover, if either of the conditions in~(i) or~(ii) hold, then $\rho_i^I < \rho_i^{I_0}$ for all $i \in I_0^\circ$.
\end{lemma}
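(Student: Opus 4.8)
I would prove Lemma~\ref{lem:stable-extension} by reducing both cases to a comparison principle for the tridiagonal traffic equations~\eqref{eq:reduced-traffic-rhos}, exploiting the fact that on~$I_0$ the hypothesis $\rho^{I_0}_i \le 1$ makes the $I_0$-system \emph{linear} (it coincides with the stable traffic equation over $I_0$), so $\rho^{I_0}$ and $v^{I_0}$ are given by the explicit gambler's-ruin formulas from Lemma~\ref{lem:reduced-system}. The strategy is: (a)~show the $I$-solution restricted to $I_0$ satisfies the same linear recursion but with altered boundary data; (b)~compare the two solutions via the algebraic speed identity~\eqref{eq:speed-neighbours} and a monotonicity argument; (c)~deduce the strict $\rho$ inequality from the strict $v$ inequality.

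\textbf{Step (a): restriction.} Consider case~(i), where $\min I_0 = \min I$ and $\max I_0 < \max I$. From~\eqref{eq:reduced-traffic-rhos} over~$I$, the values $\rho^I_i$ for $i \in I_0^\circ$ satisfy the recursion $(b_i+a_{i+1})\rho^I_i = (1\wedge\rho^I_{i-1})a_i + (1\wedge\rho^I_{i+1})b_{i+1}$, with left boundary $\rho^I_{\min I_0 - 1} = \rho^I_{\min I - 1} = 1$, and right ``boundary'' value $\rho^I_{\max I_0}$, which by hypothesis lies in $(0,1)$, so the term $(1\wedge\rho^I_{\max I_0})b_{\max I_0}$ equals $\rho^I_{\max I_0} b_{\max I_0} < b_{\max I_0}$. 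Meanwhile $\rho^{I_0}$ solves the \emph{same-shaped} system over $I_0$ but with right boundary $\rho^{I_0}_{\max I_0} = 1$. So I would write $d_i := \rho^{I_0}_i - \rho^I_i$ for $i \in I_0$, with the convention $d_{\min I_0 - 1} = 0$ and $d_{\max I_0} = 1 - \rho^I_{\max I_0} > 0$. The key observation is that both systems are, on $I_0^\circ$, governed by the \emph{same} linear operator once we know all relevant $\rho$-values are $\le 1$ — and indeed I would first argue $\rho^I_i \le 1$ for all $i \in I_0^\circ$: this follows because $\rho^I$ restricted to $I_0$ solves the reduced system over $I_0$ with boundary data $\le 1$ (apply Lemma~\ref{lem:reduced-system-consistency}-style reasoning, or directly the maximum principle for the M-matrix $I - P^{I_0}$), so by Lemma~\ref{lem:reduced-system} it is the gambler's-ruin interpolant between values $\le 1$, hence itself $\le 1$. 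Then $d$ satisfies the homogeneous linear recursion $(b_i+a_{i+1})d_i = a_i d_{i-1} + b_{i+1}d_{i+1}$ on $I_0^\circ$ with $d_{\min I_0 - 1} = 0$ and $d_{\max I_0} > 0$; by the discrete maximum principle (harmonicity for the associated birth-death chain) $d_i > 0$ for all $i \in I_0^\circ$, which is exactly $\rho^I_i < \rho^{I_0}_i$ — the final claim of the lemma in case~(i).

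\textbf{Step (b): the speed inequality.} For $v^I$ versus $v^{I_0}$ I would use the definition~\eqref{eq:v-I-def}: since all relevant $\rho$-values are $\le 1$ on $I_0$, we have $v^{I_0}_i = \rho^{I_0}_i b_i - \rho^{I_0}_{i-1}a_i$ and $v^I_i = \rho^I_i b_i - \rho^I_{i-1}a_i$ for $i \in I_0$ (using $\rho^I_{\min I_0 - 1} = 1$ where needed). Hence $v^{I_0}_i - v^I_i = d_i b_i - d_{i-1} a_i$. Now I would invoke the version of Lemma~\ref{lem:speeds-algebra} applicable to the reduced systems: since $\rho^{I_0}_i \le 1$ throughout, all the $v^{I_0}_i$ are equal (to $\hv(I_0)$), and likewise, since $\rho^I_i \le 1$ on $I_0^\circ$, the quantities $v^I_i$ for $i\in I_0$ are all equal along $I_0$; so $v^{I_0}_i - v^I_i$ is constant in $i$ over $I_0$, call it $c$. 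Summing $d_i b_i - d_{i-1}a_i = c$ telescopes against the recursion for $d$: in fact summing $(b_i+a_{i+1})d_i = a_i d_{i-1} + b_{i+1}d_{i+1}$ over $i\in I_0^\circ$ collapses to $b_{\min I_0}d_{\min I_0} - a_{\min I_0}d_{\min I_0 -1} = b_{\max I_0}d_{\max I_0} - a_{\max I_0 + \text{(index shift)}}\cdots$; more cleanly, the quantity $J_i := b_{i+1}d_{i+1} - a_{i+1}d_i$ is constant in $i$ (it is the ``flux'' of the harmonic function $d$), and evaluating at the right endpoint where $d_{\max I_0} > 0$ and $d_{\max I_0 - 1} \ge 0$ gives $J > 0$ unless $d \equiv 0$, which it is not; and at the left endpoint $J = b_{\min I_0} d_{\min I_0} - a_{\min I_0}\cdot 0 = b_{\min I_0}d_{\min I_0} > 0$; reconciling signs shows $c = v^{I_0}_i - v^I_i > 0$, i.e.\ $v^I_i < v^{I_0}_i$, as required. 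Case~(ii) is the mirror image: reverse the roles of left and right, the perturbed boundary is now on the left with $d_{\min I_0 - 1} < 0$, the flux has the opposite sign, and one obtains $v^I_i > v^{I_0}_i$; the $\rho$-inequality $\rho^I_i < \rho^{I_0}_i$ holds for the analogous reason (now $d$ is harmonic with negative left boundary value and zero right boundary value, forcing $d < 0$... wait — here I must be careful: the final claim of the lemma asserts $\rho^I_i < \rho^{I_0}_i$ in \emph{both} cases, so in case~(ii) I need $d_i > 0$ again; this holds because in case~(ii) the perturbed term at the \emph{left} is $(1\wedge\rho^I_{\min I_0 - 1})b_{\min I_0} = \rho^I_{\min I_0-1}b_{\min I_0} < b_{\min I_0}$ whereas $\rho^{I_0}_{\min I_0 - 1} = 1$, which again makes $d_{\min I_0 - 1} = 1 - \rho^I_{\min I_0 - 1} > 0$, so $d$ is a non-negative harmonic function with a strictly positive boundary value and therefore $d > 0$ on the interior by the maximum principle).

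\textbf{Main obstacle.} The routine part is the discrete maximum principle; the part requiring genuine care is \emph{justifying that $\rho^I_i \le 1$ for all $i \in I_0^\circ$} — this is what lets me treat the $I$-system as linear over $I_0$ and apply Lemma~\ref{lem:reduced-system}. If this failed, the $I$-system would be genuinely nonlinear (a $\min$ appears) and the subtraction $d = \rho^{I_0} - \rho^I$ would not satisfy a clean linear recursion. I expect this to follow because $\rho^I$ restricted to $I_0$ solves the reduced traffic equation over $I_0$ with all boundary data in $(0,1]$ (by the hypotheses $\rho^I_{\max I_0} < 1$ resp.\ $\rho^I_{\min I_0 - 1} < 1$ plus the convention-$1$ at the genuine endpoint of $I_0$), whence Lemma~\ref{lem:reduced-system} identifies it as the gambler's-ruin interpolant, which being a convex-combination-type formula of values $\le 1$ stays $\le 1$. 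Once that is nailed down, the flux/telescoping computation for the speed difference is short, and the strict inequalities come for free from the strictness of the boundary perturbation ($\rho^I_{\max I_0} < 1$ strictly, resp.\ $\rho^I_{\min I_0 - 1} < 1$ strictly).
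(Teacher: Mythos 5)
Your overall architecture (restrict the $I$-system to $I_0$, subtract, and run a positivity/M-matrix argument on the difference $d_i=\rho^{I_0}_i-\rho^I_i$, then read off the speed gap at the unperturbed endpoint) is a workable alternative to the paper's route, which instead absorbs the perturbed boundary term into a modified parameter $b'_{K}:=\rho^I_K b_K<b_K$ (resp.\ $a'_{\min I_0}$) and uses strict monotonicity of the explicit formula~\eqref{eq:rho-I-def} in that parameter. Your flux identity and the endpoint evaluation $v^{I_0}_{\min I_0}-v^I_{\min I_0}=b_{\min I_0}d_{\min I_0}>0$ match what the paper does.

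However, there is a genuine gap at exactly the step you flag as the main obstacle. You justify $\rho^I_i\leq 1$ on $I_0^\circ$ by asserting that the gambler's-ruin solution of~\eqref{eq:reduced-traffic-rhos} with boundary data $\leq 1$ is a ``convex-combination-type formula'' and hence stays $\leq 1$. This is false: the recursion $(b_i+a_{i+1})\rho_i=a_i\rho_{i-1}+b_{i+1}\rho_{i+1}$ is not a harmonicity equation (the right-hand coefficients $a_i+b_{i+1}$ need not sum to $b_i+a_{i+1}$), and the interior values can exceed both boundary values. For instance, with $a_1=10$, $b_1=a_2=b_2=a_3=1$, $b_3=10$ and boundary values $\rho_0=\rho_3=1$, the linear system gives $\rho_1=\rho_2=10$. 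So the bound $\rho^I_i\leq 1$ cannot come from the boundary data alone; it must be derived from the hypothesis $\rho^{I_0}_i\leq 1$ by a \emph{comparison} of the two systems (the $I$-restriction has the same data as the $I_0$-system except that one boundary input is strictly smaller, and the solution is monotone in that input because $(I-P^{I_0})^{-1}\geq 0$, or, as in the paper, because $\hrho_{I_0}(j)$ from~\eqref{eq:rho-I-def} is strictly increasing in $b_{\max I_0}$). There is also a circularity you should untangle: you cannot declare that the restricted $\rho^I$ solves the \emph{linear} system over $I_0$ until you already know $\rho^I_i\leq 1$ there (otherwise the $\min$'s in~\eqref{eq:reduced-traffic-rhos} remain active). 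The clean order of operations is the paper's: show first that the explicit linear solution with the modified parameter is $<\hrho_{I_0}\leq 1$, conclude that it therefore solves the nonlinear reduced equation with that parameter, and only then invoke uniqueness to identify it with the restriction of $\rho^I$. With that repair, your difference-function argument goes through.
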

\begin{proof}
Suppose that the conditions in~(i) hold; for convenience, let $K = \max I_0$.
Since $\nu^{I_0}$ solves~\eqref{eq:reduced-balance} for $I_0$, and $\rho^{I_0}_i \leq 1$ for all $i \in I_0^\circ$,
it follows from Lemma~\ref{lem:reduced-system} that $\rho^{I_0}_i = \hrho_{I_0} (i)$, $i \in I_0^\circ$,
for $\hrho_{I_0}$ as defined at~\eqref{eq:rho-I-def}. 
Since $\nu^I$ solves~\eqref{eq:reduced-balance} for $I$, we have from~\eqref{eq:reduced-traffic-rhos} that
 the $\rho^I_i$, $i \in I_0^\circ$, solve the system~\eqref{eq:rho-I-I0},
with $\rho^I_{\min I-1} = \rho^I_{\min I_0 -1} = 1$ and $\rho^I_{K} < 1$. If we set $b_{K}' := \rho^I_K b_K$, then this system coincides with the reduced
traffic equation~\eqref{eq:reduced-traffic-rhos} over $I=I_0$, but with $b_K$ replaced by $b_K' < b_K$ (since $\rho^I_K < 1$).
Thus, if we define $\hrho'_{I_0}$ by~\eqref{eq:rho-I-def}, but with $b'_K$ in place of $b_K$,
then Lemma~\ref{lem:reduced-system} shows that $\rho^I_i = \hrho'_{I_0} (i)$ for all $i \in I_0^\circ$.

Suppose $I_0 = [\ell; m]$. In the formula~\eqref{eq:rho-I-def}
for $\hrho_{I_0} (j)$, $j \in I_0^\circ$, 
none of $\alpha (\ell; 1)$, $\beta(\ell; 1)$, \ldots, $\alpha (\ell; m-1)$, $\beta (\ell; m-1)$
contains $b_K = b_{\ell+m-1}$, which appears only in $\hv (\ell ; m)$. Moreover, as a function of $b_{\ell+m-1}$, we have $\hv (\ell; m) = A b_{\ell+m-1} -B$ for $A >0$,
so that $\hrho_{I_0} (j)$ is strictly increasing in $b_K$.
Since $b'_K < b_K$, it follows that $\rho^I_j = \hrho'_{I_0} (j) < \hrho_{I_0} (j) = \rho^{I_0}_j$ for all $j \in I_0^\circ$, as claimed.
In particular, $\rho^I_j < 1$ for all $j \in I_0^\circ$.

Let $L = \min I = \min I_0$. Then, from~\eqref{eq:v-I-def} and the fact that $\rho^I_L < \rho^{I_0}_L \leq 1$,
$v^{I_0}_{L} - v^{I}_{L} =   \bigl( \rho^{I_0}_{L} - \rho^{I}_L \bigr)  b_L > 0$,
and, by the appropriate version of~\eqref{eq:speed-neighbours},   $v^{I_0}_j = v^{I_0}_L > v^{I}_L = v^I_j$
for all $j \in I_0^\circ$. This completes the proof of the lemma in the case where the hypotheses of~(i) hold. 
The argument in the case that the hypotheses in~(ii) hold is similar.
\end{proof}

Now we can complete the proof of the main stability result, Theorem~\ref{thm:main}.

\begin{proof}[Proof of Theorem~\ref{thm:main}.]
Let $N \in \N$ and suppose that~\eqref{ass:positive-rates} holds.
Proposition~\ref{prop:general-Jackson} shows that	there exists a unique solution $\nu = (\nu_i)_{i \in [N]}$
	to the general traffic equation~\eqref{eq:general-balance}. Define $\rho_i := \nu_i / \mu_i$
	for every $i \in [N]$. Then the $\rho_i$ define uniquely an $n \in [N+1]$ and a partition $\Theta = (\theta_1, \ldots, \theta_n)$
	as follows. If $\rho_1 \geq 1$, set $\theta_1 = \{ 1\}$.
	Otherwise, set $\theta_1 = \{ 1, \ldots, k, k+1\}$ for the maximal $k \leq N$
	for which $\rho_k <1$. If $k = N$, this completes $\Theta$. Otherwise $k < N$ and $\rho_{k+1} \geq 1$.
	If $k+2 > N$ or $\rho_{k+2} \geq 1$, set $\theta_2 = \{ k+2\}$, else set $\theta_2 = \{ k+2, \ldots, k+1+m, k+2+m\}$
	for the maximal $m$ with $k + 1 + m \leq N$ and $\rho_{k+1+m} < 1$.
	Iterating this definition gives $\Theta = (\theta_1, \ldots, \theta_n)$ where, by
	construction, $\rho_i \in (0,1)$ for all $i \in \theta^\circ$, $\theta \in \Theta^\star$,
	and $\rho_{\max \theta} \geq 1$ provided $\max \theta \leq N$.
Lemma~\ref{lem:reduced-system-consistency} with $I = [N+1]$ and $I_0 = \theta \in \Theta^\star$
then shows that $\rho_i = \rho^{[N+1]}_i = \rho^\theta_i$ for all $i \in \theta^\circ$,
while Lemma~\ref{lem:reduced-system} says that $\rho^\theta = \hrho_\theta (i)$. Hence $\rho_i = \hrho_\theta (i)$ given by~\eqref{eq:rho-I-def} for $i \in \theta^\circ$. 
This proves~\eqref{eq:rho-rho-hat}.
	
For statement~\ref{thm:main-i},
observe that $S := \{ i \in [N] : \rho_i < 1\}$ as defined in Proposition~\ref{prop:general-Jackson}
is given by $S = \cup_{\theta \in \Theta^\star} \theta^\circ$, by the property established above that $\rho_i < 1$ if and only if $i \in \theta^\circ$
for some $\theta \in \Theta^\star$. Thus the first display in statement~\ref{thm:main-i} follows from Proposition~\ref{prop:general-Jackson},
using the fact that $\varpi_\theta (i)$ as defined in~\eqref{eq:limit-distribution-component}
is given by $\varpi_\theta (z_1, \ldots, z_k ) = \prod_{i \in \theta^\circ} \rho_i^{z_i} (1-\rho_i)$, where $\vert\theta\vert = k+1$ and $\rho_i = \hrho_\theta (i)$, by~\eqref{eq:rho-rho-hat}.
We have shown that $\lim_{t \to \infty} \IP ( \eta_j (t) \geq m ) = \rho_j^m$ for $j \in S$, and, by~\eqref{eq:eta-bound}, $\eta_j (t)$, $t \in \RP$, is uniformly integrable,
so that $\lim_{t \to \infty} \IE \eta_j (t) = \sum_{m \in \N} \rho_j^m = \rho_j / (1-\rho_j)$. With $R_\theta$ as defined at~\eqref{eq:R-def}, we have that $\IE R_\theta (t) = \IE \sum_{j \in \theta^\circ} (1+ \eta_j (t))$, by~\eqref{eq:eta-def},
and taking $t \to \infty$ yields~\eqref{eq:cloud-size}. This completes the proof of~\ref{thm:main-i}.

Consider statement~\ref{thm:main-ii}. For $\theta_\ell, \theta_r \in \Theta$ with $\ell < r$, we have from~\eqref{eq:eta-def} that 
$\min_{i \in \theta_r} X_i (t) - \max_{i \in \theta_\ell} X_i (t) \geq \eta_{\max \theta_\ell} (t)$. Moreover, from the statement after~\eqref{eq:rho-rho-hat},
we have $\rho_{\max \theta_\ell} \geq 1$. In the notation of Proposition~\ref{prop:general-Jackson}, this means $\max \theta_\ell \in U$, and hence,
by~\eqref{eq:mass-escape}, $\lim_{t \to \infty} \IP ( \eta_{\max \theta_\ell} (t) \leq B ) = 0$ for all $B < \infty$.
This proves~\eqref{eq:cloud-separation}.
	
	For statement~\ref{thm:main-iii}, existence of the $v_i$ follows from Theorem~\ref{thm:speeds}. Lemma~\ref{lem:speeds-algebra} shows that, for each $\theta \in \Theta$, $v_i$ is the same for all $i \in \theta^\circ$;
	call the common value $\hv (\theta)$. The fact that $\hv (\theta)$ satisfies~\eqref{eq:hv-def} and~\eqref{eq:hv-l-m} follows from Lemma~\ref{lem:reduced-system}. This completes the proof of the theorem.
\end{proof}

The following theorem 
will be our main tool to prove Theorem~\ref{thm:algorithm}:
it takes the results of Lemmas~\ref{lem:reduced-system}--\ref{lem:stable-extension}
and presents them in terms of pairwise comparisons of adjacent candidate stable clouds,
tailored to the structure of Algorithm~\ref{alg:partition}.
In particular, Theorem~\ref{thm:block-merge} shows that if one has two adjacent candidate stable clouds, in which the cloud to the left has a
greater intrinsic speed, then the union of the two candidate stable clouds is itself a candidate stable cloud.

\begin{theorem}
\label{thm:block-merge}
	Let $N \in \N$ and suppose that~\eqref{ass:positive-rates} holds.
Let $I_1$ and $I_2$ be disjoint discrete interval subsets of $[N+1]$,
with $K := \max I_1 = \min I_2 -1$. 
Let $\nu^{I_k}$ be the solution to the reduced traffic equation~\eqref{eq:reduced-balance} with $I = I_k$, $k \in \{1,2\}$.
Define $\rho^{I_k}_i := \nu_i^{I_k}/\mu_i$.

Suppose that $\rho^{I_1}_i \leq 1$ for all $i \in I_1^\circ$ and
$\rho^{I_2}_i \leq 1$ for all $i \in I_2^\circ$.
Recall the definition of $\hv$ from~\eqref{eq:hv-def}. Let $I := I_1 \cup I_2$, which is also
a discrete interval subset of $[N+1]$.
Then the following hold.
\begin{thmenumi}[label=(\roman*)]
\item If $\hv({I_1}) > \hv(I_2)$, then $\rho^I_i < 1$ for all $i \in I^\circ$,
and $\hv ({I_1}) > \hv ({I}) > \hv ({I_2})$. Moreover,
\begin{equation}
\label{eq:rhos-decrease}
\rho^I (i) < \begin{cases} \rho^{I_1}_i & \text{if } i \in I_1^\circ , \\
1 & \text{if } i = K , \\
\rho^{I_2}_i & \text{if } i \in I_2^\circ .\end{cases} \end{equation}
\item  If $\hv ({I_1}) \leq \hv ({I_2})$, then  
\begin{equation}
\label{eq:rhos-equal}
 \rho^I_i = \begin{cases} \rho^{I_1}_i & \text{if } i \in I_1^\circ , \\
\rho^{I_2}_i  & \text{if }   i \in I_2^\circ .\end{cases} \end{equation}
Moreover, if $\hv ({I_1}) < \hv ({I_2} )$, then $\rho^{I}_K > 1$,
while if $\hv ({I_1}) = \hv ({I_2})$, then $\rho^I_K = 1$.
\end{thmenumi}
\end{theorem}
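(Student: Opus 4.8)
The plan is to localise everything to the single junction index $K$: since $I^\circ = I_1^\circ \cup \{K\} \cup I_2^\circ$, the two reduced systems over $I$ decouple except through the value $\rho^I_K$, so the whole statement reduces to (a)~a scalar formula for $\rho^I_K$ valid whenever $\rho^I$ agrees with $\rho^{I_1}$ and $\rho^{I_2}$ off $K$, combined with (b)~the restriction lemma (Lemma~\ref{lem:reduced-system-consistency}) and the extension lemma (Lemma~\ref{lem:stable-extension}). First I would record two boundary identities. Since $\rho^{I_1}_i \le 1$ on $I_1^\circ$, Lemma~\ref{lem:reduced-system} gives $v^{I_1}_i = \hv(I_1)$ for all $i \in I_1$; evaluating~\eqref{eq:v-I-def} at $i = K = \max I_1$, where $\rho^{I_1}_{\max I_1} = 1$ by convention, yields
\[ (1 \wedge \rho^{I_1}_{K-1})\, a_K = b_K - \hv(I_1) , \]
with the convention $\rho^{I_1}_{K-1} := 1$ when $I_1 = \{K\}$. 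Symmetrically, evaluating at $i = K+1 = \min I_2$, where $\rho^{I_2}_{\min I_2 - 1} = 1$,
\[ (1 \wedge \rho^{I_2}_{K+1})\, b_{K+1} = a_{K+1} + \hv(I_2) . \]
Substituting these into the reduced traffic equation~\eqref{eq:reduced-traffic-rhos} for $I$ at $i = K$ shows: if $\rho^I$ restricted to $I_1^\circ$ and $I_2^\circ$ equals $\rho^{I_1}$ and $\rho^{I_2}$ respectively, then
\[ \rho^I_K \;=\; 1 - \frac{\hv(I_1) - \hv(I_2)}{b_K + a_{K+1}} . \]

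For part~(ii), assume $\hv(I_1) \le \hv(I_2)$ and set $r := 1 - (\hv(I_1) - \hv(I_2))/(b_K + a_{K+1}) \ge 1$. I would define $\tilde\rho$ on $I^\circ$ to agree with $\rho^{I_1}$ on $I_1^\circ$, with $\rho^{I_2}$ on $I_2^\circ$, and with $r$ at $K$, and check directly that $\tilde\rho$ solves~\eqref{eq:reduced-traffic-rhos} over $I$. Away from $\{K-1, K\}$ this is immediate from the reduced equations for $I_1$ and $I_2$ and the matching boundary conventions; at $i = K-1$ it uses $1 \wedge r = 1$, which is exactly the value $\rho^{I_1}_{\max I_1} = 1$ appearing in the $I_1$-equation there; and at $i = K$ it is the displayed scalar identity. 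Uniqueness of solutions to the reduced traffic equation (Lemma~\ref{lem:reduced-system}) then forces $\rho^I = \tilde\rho$, which is~\eqref{eq:rhos-equal}, with $\rho^I_K = r$; hence $\rho^I_K > 1$ when $\hv(I_1) < \hv(I_2)$ and $\rho^I_K = 1$ when $\hv(I_1) = \hv(I_2)$.

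For part~(i), assume $\hv(I_1) > \hv(I_2)$. I would first show $\rho^I_K < 1$ by contradiction: if $\rho^I_K \ge 1$, then Lemma~\ref{lem:reduced-system-consistency} applies with $I_0 = I_1$ (via $\min I_1 = \min I$ and $\rho^I_{\max I_1} = \rho^I_K \ge 1$) and with $I_0 = I_2$ (via $\rho^I_{\min I_2 - 1} = \rho^I_K \ge 1$ and $\max I_2 = \max I$), so $\rho^I$ agrees with $\rho^{I_1}$ and $\rho^{I_2}$ off $K$, and the scalar identity gives $\rho^I_K = 1 - (\hv(I_1)-\hv(I_2))/(b_K+a_{K+1}) < 1$, a contradiction. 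With $\rho^I_K < 1$ in hand, apply Lemma~\ref{lem:stable-extension}(i) with $I_0 = I_1$ and Lemma~\ref{lem:stable-extension}(ii) with $I_0 = I_2$ to obtain $\rho^I_i < \rho^{I_1}_i \le 1$ on $I_1^\circ$ and $\rho^I_i < \rho^{I_2}_i \le 1$ on $I_2^\circ$; together with $\rho^I_K < 1$ this is~\eqref{eq:rhos-decrease} and gives $\rho^I_i < 1$ throughout $I^\circ$. Then Lemma~\ref{lem:reduced-system} yields $v^I_i = \hv(I)$ for all $i \in I$, and the same extension lemma gives $v^I_i < v^{I_1}_i = \hv(I_1)$ on $I_1^\circ$ and $v^I_i > v^{I_2}_i = \hv(I_2)$ on $I_2^\circ$, whence $\hv(I_1) > \hv(I) > \hv(I_2)$. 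In the degenerate cases $\vert I_1\vert = 1$ or $\vert I_2\vert = 1$, where the relevant $I_k^\circ$ is empty, the corresponding inequality is instead read off directly from $v^I_K = \hv(I) = \rho^I_K b_K - a_K < b_K - a_K = \hv(I_1)$ (resp.\ $v^I_{K+1} = \hv(I) = b_{K+1} - \rho^I_K a_{K+1} > b_{K+1} - a_{K+1} = \hv(I_2)$), using~\eqref{eq:v-I-def} and $\rho^I_K < 1$.

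The main obstacle is essentially bookkeeping: keeping straight the boundary conventions that set the $\rho$-value equal to $1$ just outside each interval as one passes between the systems over $I_1$, $I_2$ and $I$, and in particular handling the singleton cases $\vert I_1\vert = 1$ or $\vert I_2\vert = 1$, where the relevant $I_k^\circ$ is empty, Lemma~\ref{lem:stable-extension} gives no information, and the short direct computation above is needed instead. A secondary point requiring care is verifying that the candidate $\tilde\rho$ in part~(ii) respects the $\min$-nonlinearity at $i = K-1$ and $i = K$; this is precisely where the hypotheses $\rho^{I_1}_i \le 1$, $\rho^{I_2}_i \le 1$ and the sign $r \ge 1$ are used.
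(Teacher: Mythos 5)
Your proof is correct and rests on the same three pillars as the paper's own argument --- Lemma~\ref{lem:reduced-system} (uniqueness, and the identification $v^{I_k}_i = \hv(I_k)$ for candidate stable clouds), the restriction Lemma~\ref{lem:reduced-system-consistency}, and the extension Lemma~\ref{lem:stable-extension} --- but you organise the case analysis differently. The paper proves three one-way implications ($\rho^I_K>1 \Rightarrow \hv(I_1)<\hv(I_2)$, $\rho^I_K=1 \Rightarrow \hv(I_1)=\hv(I_2)$, $\rho^I_K<1 \Rightarrow \hv(I_1)>\hv(I)>\hv(I_2)$) and upgrades them to equivalences by noting that both trichotomies are exhaustive and mutually exclusive. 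You instead extract the explicit junction identity $\rho^I_K = 1 - (\hv(I_1)-\hv(I_2))/(b_K+a_{K+1})$, valid whenever $\rho^I$ agrees with $\rho^{I_1},\rho^{I_2}$ off $K$, and use it twice: in part~(ii) to exhibit the solution by hand and invoke uniqueness, and in part~(i) to rule out $\rho^I_K\ge 1$ by contradiction before handing over to Lemma~\ref{lem:stable-extension}. This buys a closed form for $\rho^I_K$ that the paper never writes down, and your explicit handling of the singleton cases $\vert I_k\vert=1$ (where $I_k^\circ=\emptyset$ and Lemma~\ref{lem:stable-extension} is vacuous) fills in a step the paper leaves implicit. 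Two small remarks. First, when verifying that $\tilde\rho$ solves \eqref{eq:reduced-traffic-rhos} over $I$, the equation at $i=K+1$ also sees $\tilde\rho_K$ through the term $(1\wedge\tilde\rho_K)a_{K+1}$ and needs the same observation $1\wedge r=1$ as at $i=K-1$; you fold this into ``matching boundary conventions'', but it belongs with the $\{K-1,K\}$ checks. Second, your singleton computation $v^I_{K+1}=b_{K+1}-\rho^I_K a_{K+1}$ makes visible that the strict inequalities $\hv(I)>\hv(I_2)$ and $\rho^I_i<\rho^{I_2}_i$ degenerate to equalities when $a_{K+1}=0$, which \eqref{ass:positive-rates} permits (in the totally asymmetric setting of Example~\ref{ex:a-zero} one has $\hv(I)=b_{\max I}=\hv(I_2)$ identically); this is a defect of the theorem statement inherited equally by the paper's proof via Lemma~\ref{lem:stable-extension}(ii), not a flaw introduced by your argument, and the conclusions actually used downstream ($\rho^I_i<1$ on $I^\circ$ and $\hv(I_1)>\hv(I)$) survive.
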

\begin{proof} 
Let $I_1$ and $I_2$ be disjoint discrete interval subsets of $[N+1]$,
with $K := \max I_1 = \min I_2 -1$. 
Let $\nu^{I_k}$ be the solution to the reduced traffic equation~\eqref{eq:reduced-balance} with $I = I_k$, $k \in \{1,2\}$.
Define $\rho^{I_k}_i := \nu_i^{I_k}/\mu_i$, and define $v^{I_k}$ by~\eqref{eq:v-I-def}. 
Suppose that $\rho^{I_1} (i) \leq 1$ for all $i \in I_1^\circ$ and
$\rho^{I_2}_i \leq 1$ for all $i \in I_2^\circ$. 
Then 
Lemma~\ref{lem:reduced-system}
shows that 
\begin{equation}
\label{eq:two-speeds} v^{I_1}_i = \hv(I_1) \text{ for all } i \in I_1, 
\text{ and } v^{I_2}_i = \hv (I_2) \text{ for all } i \in I_2. \end{equation} 

Consider also the solution $\nu^I$ to the reduced traffic equation~\eqref{eq:reduced-balance} over~$I := I_1 \cup I_2$. 
Suppose first that $\rho^I ( K ) \geq 1$, i.e., $\nu^I_{K} \geq \mu_{K}$. 
Then Lemma~\ref{lem:reduced-system-consistency},
applied first with $I_0 = I_1$ and second with $I_0=I_2$,
 implies that~\eqref{eq:rhos-equal} holds. From~\eqref{eq:v-I-def}, this means that
$v^I_i = v^{I_1}_i$  for all $i \in I_1$ and $v^I_i =  v^{I_2}_i$  for all $i \in I_2$.
Theorem~\ref{thm:speeds} and~\eqref{eq:speed-neighbours} (appropriately adapted) imply that if $\rho^I ( K ) = 1$, we have $v^I_K = v^I_{K+1}$, while if 
$\rho^I_K >1$, we have $v^I_K < v^I_{K+1}$. Thus it follows from~\eqref{eq:two-speeds} that
\begin{align}
\label{eq:implication-1} 
\rho^I_K & > 1 \text{ implies } \hv ({I_1}) < \hv ({I_2}) ; \\
\label{eq:implication-2} 
\rho^I_K  & = 1 \text{ implies  } \hv ({I_1}) = \hv ({I_2}) .
\end{align}
On the other hand, 
Lemma~\ref{lem:stable-extension} shows that 
\begin{align}
\label{eq:implication-3} 
\rho^I_K & < 1 \text{ implies } \hv ({I_1}) > \hv (I) > \hv ({I_2}) ,
\end{align}
and the final statement in Lemma~\ref{lem:stable-extension} yields~\eqref{eq:rhos-decrease}. 
Combining~\eqref{eq:implication-1}--\eqref{eq:implication-3}, we obtain
\begin{align}
\label{eq:equivalence-1} 
\rho^I_K & > 1 \text{ if and only if } \hv ({I_1}) < \hv ({I_2}) ; \\
\label{eq:equivalence-2} 
\rho^I_K & = 1 \text{ if and only if  } \hv ({I_1}) = \hv ({I_2}) ; \\
\label{eq:equivalence-3} 
\rho^I_K & < 1 \text{ if and only if } \hv ({I_1}) > \hv (I) > \hv ({I_2}) .
\end{align}
The equivalences~\eqref{eq:equivalence-1}--\eqref{eq:equivalence-3} complete the proof of the theorem. 
\end{proof}

We now complete the proofs of Theorem~\ref{thm:algorithm} and the corollaries presented in Section~\ref{sec:results}.

\begin{proof}[Proof of Theorem~\ref{thm:algorithm}.]
We claim that 
Theorem~\ref{thm:block-merge}(i) and an induction shows that, 
at each step of Algorithm~\ref{alg:partition},
the ordered partition $\Theta^\kappa$ is such that
\begin{align}
\label{eq:stability-induction}
 \rho_i^\theta < 1 \text{ for all } i \in \theta^\circ \text{ and all } \theta \in \Theta^\kappa \text{ with } \vert \theta \vert \geq 2.
\end{align}
The base case of~\eqref{eq:stability-induction}, $\kappa = 0$, is trivial since the initial partition consists only of singletons.
For the inductive step, the inductive hypothesis deals trivially with
any $\theta$ which is unchanged from $\Theta^\kappa$ to $\Theta^{\kappa+1}$. Consider adjacent $\theta^\kappa_{j}, \theta^\kappa_{j+1} \in \Theta^\kappa$
with $\hv ( \theta^\kappa_j ) > \hv (\theta^\kappa_{j+1} )$, and define the merger $\theta^{\kappa+1}_j = \theta^\kappa_j \cup \theta^\kappa_{j+1}$.
Then Theorem~\ref{thm:block-merge}(i) and the inductive hypothesis completes the proof of~\eqref{eq:stability-induction}. 

From~\eqref{eq:stability-induction}, the output of Algorithm~\ref{alg:partition}  
is an ordered partition $\Theta$ such that
$\rho_i^\theta < 1$ for all $i \in \theta^\circ$ and all $\theta \in \Theta^\star$.
Moreover, 
if $\Theta = (\theta_1, \ldots, \theta_n)$, the stopping condition for Algorithm~\ref{alg:partition}
implies that $\hv ( \theta_1 ) \leq \cdots \leq \hv (\theta_n)$.
Thus Theorem~\ref{thm:block-merge}(ii) shows that for every adjacent $\theta_j, \theta_{j+1} \in \Theta$,
one has $\rho^\theta_K \geq 1$ where
$K = \max \theta_j$ and $\theta = \theta_j \cup \theta_{j+1}$.
Then Lemma~\ref{lem:reduced-system-consistency}
completes the verification of~\eqref{eq:rho-rho-hat}, showing that the $\Theta$ produced by Algorithm~\ref{alg:partition} is the cloud partition
characterized in Theorem~\ref{thm:main}. 
\end{proof}

\begin{proof}[Proof of Corollary~\ref{cor:singletons}.]
Theorem~\ref{thm:main} shows that $\Theta$ consists solely of singletons if and only if $\rho_i \geq 1$ for all $i \in [N]$, where $\rho_i =\nu_i/ \mu_i$ and
 $\nu$ is the unique solution to~\eqref{eq:general-balance}. In particular, if
$\Theta$ consists only of singletons, then Theorem~\ref{thm:main}\ref{thm:main-iii} with~\eqref{eq:hv-def} implies that $v_i = \hv ( \{ i \} ) = b_i - a_i$ must satisfy $v_{i+1} \geq v_i$ for all $i$,
which yields~\eqref{eq:singleton-condition}.
On the other hand, if~\eqref{eq:singleton-condition} holds, then Algorithm~\ref{alg:partition} will terminate immediately, and Theorem~\ref{thm:algorithm} implies that  $\Theta$ consists solely of singletons.
The final sentence in Corollary~\ref{cor:singletons} follows from Theorem~\ref{thm:main}\ref{thm:main-ii} and~\ref{thm:main-iii}.
\end{proof}

\begin{proof}[Proof of Corollary~\ref{cor:stable}.]
Theorem~\ref{thm:main} combined with Proposition~\ref{prop:stable-Jackson}
shows that $\Theta = ([N+1])$ consists of a single part if and only if $\rho_i < 1$ for all $i \in [N]$, where $\rho_i = \nu_i/\mu_i$ and
 $\nu$ is the unique solution to~\eqref{eq:stable-balance}, i.e., $\rho_i$ is given by~\eqref{eq:rho-stable}.
If $\hv_{N+1} = \hv ([N+1])$ as given at~\eqref{eq:hv-def}, then~\eqref{eq:v-stable} holds. The limit statement~\eqref{eq:stable-limit}
follows from Theorem~\ref{thm:main}\ref{thm:main-i}, while Theorem~\ref{thm:main}\ref{thm:main-iii} gives the $\lim_{t \to \infty} t^{-1} X_i (t) = \hv_{N+1}$ for all $i \in [N+1]$.
\end{proof}

\begin{proof}[Proof of Corollary~\ref{cor:one-direction}.]
Suppose that $\Theta = (\theta_1, \ldots, \theta_n)$ is the cloud partition
described in Theorem~\ref{thm:main}. 
Then Theorem~\ref{thm:main}\ref{thm:main-iii}
says that $\min_{i \in [N+1]} v_i = v_1 = \hv (\theta_1)$, where $\hv$ is defined at~\eqref{eq:hv-def}.
Suppose that $\theta_1 = [1;m]$ for $m \in [N+1]$. Then $\hv (\theta_1 ) = \hv (1;m) > 0$ if and only if $\alpha (1;m) < 1$,
where $\alpha$ is defined by~\eqref{eq:alpha-beta-def}. Thus, $\hv (\theta_1 ) > 0$ if and only if $\prod_{i=1}^m a_i < \prod_{i=1}^m b_i$.
Thus condition~(ii) in the corollary implies~(i). 

Conversely, suppose that $\hv (\theta_1 ) >0$.
If $\theta_1 = [1;m]$ for $m \geq 2$, then $\hv (\theta_1^\circ ) > \hv (\theta_1) > 0$,
by applying Theorem~\ref{thm:block-merge} with $I_1 = \theta_1^\circ = \{1, \ldots, m-1\}$ and $I_2 = \{m\}$,
using~\eqref{eq:rho-rho-hat} to characterize the stability of~$\theta_1$. Iterating Theorem~\ref{thm:block-merge},
it follows that $\hv (1;k ) > 0$ for all $k \in [m]$. 
Suppose that~(ii) fails; then there is some $r \geq m$
such that  $\prod_{i=1}^k a_i < \prod_{i=1}^k b_i$ for all $k \leq r$, but
 $\prod_{i=1}^{r+1} a_i \geq \prod_{i=1}^{r+1} b_i$. In particular, $a_{r+1} > b_{r+1}$,
$a_{r+1} a_r > b_{r+1} b_r$, \ldots, $a_{r+1} \cdots a_2 > b_{r+1} \cdots b_2$.
By~\eqref{eq:hv-def}, this means that $\hv ( I ) < 0$ for any $I$ with $\max I = r+1$ and $\min I \geq 2$.
Considering Algorithm~\ref{alg:partition}, if $\theta \in \Theta$ is such that $r +1 \in \theta$,
it follows that $\hv (\theta ) < 0$ too. 
But the speeds of stable clouds increase from left to right, so this is not possible, giving a contradiction.
Hence~(i) implies~(ii).
\end{proof}

\section{Central limit theorem for the stable system}
\label{sec:clt}

In this section we prove Theorems~\ref{thm:clt} and~\ref{thm:two-particle-clt}, the
central limit theorems in the case of a stable system.

\begin{proof}[Proof of Theorem~\ref{thm:clt}.]
Suppose that $\rho_i < 1$ for all $i \in [N]$, where $\rho_i$ is given by~\eqref{eq:rho-stable},
and define $\hv_{N+1}$ by~\eqref{eq:v-stable}. 
Consider the processes $\xi$ (on $\Z \times \ZP^N$) and $\eta$ (on $\ZP^N$)
defined through~\eqref{eq:eta-def} and~\eqref{eq:xi-def};
by Corollary~\ref{cor:stable} and~\eqref{eq:limit-distribution-component}, 
\[
\lim_{t \to \infty} \IP ( \eta (t) = z ) = \prod_{i=1}^N \rho_i^{z_i} (1-\rho_i) = \varpi_{[N+1]} ( z ) ,  \text{ for all } z=  (z_1, \ldots, z_N ) \in \ZP^N.
\]
For ease of notation, write $\tX_1 (t) := X_1 (t) - \hv_{N+1} t$.
As described in Remarks~\ref{rems:clt}\ref{rems:clt-b},
from the results of~\cite{ak} one can directly deduce the marginal central limit theorem
\begin{equation}
\label{eq:X1-clt} 
(\sigma^2 t)^{-1/2} \tX_1 (t) \tod \cN (0, 1 ) , \end{equation}
for a constant $\sigma^2 \in (0,\infty)$.

Take $s, t$ with $0 < t-s < t < \infty$,
and let $E_{s,t} (w) := \{ (\sigma^2 t)^{-1/2} \tX_1 (t-s) \leq w \}$, for $w \in \R$.
Fix $\eps >0$.  Choose a finite set $A \subset \ZP^N$ such that $\varpi_{[N+1]} (A) > 1-\eps$.
Since $\xi$ and $\eta$ defined by~\eqref{eq:eta-def} and~\eqref{eq:xi-def} are both Markov,
we have that
$\IP ( \eta (t) = z \mid \xi (t-s) )$ depends on $\xi(t-s)$ only through $\eta (t-s)$, and not $\tX_1 (t-s)$. Hence
\begin{align*}
& {}  \IP [ E_{s,t} (w) \cap \{ \eta(t) = z \} ] \\
& \quad {} \leq \IP [ \eta (t-s) \notin A ] + \IE \bigl[ \IP [ \eta(t) = z  \mid \xi (t-s) ] \1 { \eta(t-s) \in A} \2 { E_{s,t} (w)} \bigr] \\
& \quad {}  = \IP [ \eta (t-s) \notin A ] + \IE \bigl[ \IP [ \eta(t) = z  \mid \eta (t-s) ] \1 { \eta(t-s) \in A} \2 { E_{s,t} (w)} \bigr] .\end{align*}
 Then
we can choose $s_0$ large enough (depending on $\eps$ and $z$) such that, for all $s,t$ with $s \geq s_0$ and $t-s \geq s_0$,
$ \IP [ \eta (t-s) \in A ] \geq 1 -2 \eps$, and 
\[  \bigl\vert \IP [ \eta(t) = z  \mid \eta (t-s) ] - \varpi_{[N+1]} (z) \bigr\vert \1 { \eta(t-s) \in A } \leq \eps .\]
The preceding argument, together with a similar argument for a lower bound, shows that for every $\eps >0$, $z \in \ZP^N$, and $w \in \R$,
we may choose and fix $s$ (large) such that, for all $t$ with $t-s$ large enough,
\begin{align*}
\bigl\vert \IP [ E_{s,t} (w) \cap \{ \eta(t) = z \} ] -\varpi_{[N+1]} (z)  \IP [ E_{s,t} (w) ] \bigr\vert \leq \eps .\end{align*}
Now from~\eqref{eq:X1-clt} we have that $(\sigma^2(t-s))^{-1/2} \tX_1 (t-s)$ converges in distribution to $\cN (0, \sigma^2 )$ as $t \to \infty$.
For fixed $s$, $\lim_{t \to \infty} \frac{t}{t-s} = 1$, and so $\lim_{t \to \infty} \IP [ E_{s,t} (w) ] = \Phi (w)$, the standard normal distribution function.
Moreover, it is not hard to see that, since $X_1$ has uniformly bounded jump rates with increments $\pm 1$,
\[ \lim_{t \to \infty} t^{-1/2} \IE \big\vert \tX (t) - \tX (t-s) \bigr\vert = 0 ,\] 
for fixed $s$.
Hence, for fixed $z \in \ZP^N$, for all $t$ large enough it holds that
\[ \bigl\vert \IP [ E_{s,t} (w) \cap \{ \eta(t) = z \} ] - \IP [ \{  (\sigma^2 t)^{-1/2} \tX_1 (t) \leq w \} \cap \{ \eta(t) = z \} ] \bigr\vert \leq \eps , \]
using continuity of $\Phi$.
Combining these bounds we conclude that, for every $\eps >0$ and all $t$ sufficiently large
\[ \bigl\vert  \IP [ \{  (\sigma^2 t)^{-1/2} \tX_1 (t) \leq w \} \cap \{ \eta(t) = z \} ] - \varpi_{[N+1]} (z) \Phi (w) \bigr\vert \leq \eps .\]
This completes the proof.
\end{proof}

Finally, we turn to the proof of Theorem~\ref{thm:two-particle-clt}, the main part of which is
to compute the limiting variance $\sigma^2$ in the $N=1$ case of Theorem~\ref{thm:clt}.

\begin{proof}[Proof of Theorem~\ref{thm:two-particle-clt}.]
Take $N=1$, so the system consists of $N+1 = 2$ particles, with locations $X_1(t) < X_2(t)$, and a single
gap process, given for $t \in \RP$ by $\eta(t) := \eta_1 (t) = X_2(t) - X_1(t) - 1 \in \ZP$.
In this case, $\eta$ is a classical M/M/1 queue,
with arrival rate $\lambda := a_1 + b_2$ and service rate $\mu := a_2 + b_1$. 
 Under the conditions of Theorem~\ref{thm:two-particle-clt}, we have $\lambda < \mu$, i.e., the M/M/1 queue is stable.
Then $\eta$ has a unique geometric stationary distribution; denote by $\tIP$ and $\tIE$
probability and expectation for an initial distribution such that $\eta(0)$ is in stationarity, and, for definiteness $X_1 (0) = 0$.

Theorem~\ref{thm:clt}
implies that~\eqref{eq:two-particle-clt} holds, where~\eqref{eq:v-stable} yields $\hv_{N+1} = \hv_2$ as in~\eqref{eq:sigma-two-particle};
it remains to verify that $\sigma^2$ is given by~\eqref{eq:sigma-two-particle}. Since  Theorem~\ref{thm:clt} applies
for any fixed initial state $X(0) \in \bbX_{N+1}$, it also holds for any initial distribution; hence also  under $\tIP$,
when $\eta(0)$ is stationary. Thus to complete the proof of Theorem~\ref{thm:two-particle-clt}, it suffices to prove
\begin{equation}
\label{eq:stationary-variance}
 \lim_{t \to \infty} t^{-1} \tVar ( X_1 (t) )   = \frac{a_1 a_2 + b_1 b_2}{a_2+b_1}.
\end{equation}

For $t \in \RP$, let 
\[ A(t) := \# \{ s \in [0,t] : \eta (s) = \eta(s-) + 1 \}, \text{ and } D(t) := \# \{ s \in [0,t] : \eta (s) = \eta(s-) -1\} ,\]
denote, respectively, the number of arrivals and departures from the queue~$\eta$ up to time $t$.
Then $A(t) = A_1 (t) + A_2(t)$, where
\begin{align*}
 A_1 (t) & := \#\{ s \in [0,t] : X_1 (s) = X_1(s-) - 1 \},\\
 A_2 (t) & := \#\{ s \in [0,t] : X_2 (s) = X_2(s-) + 1 \} ,
\end{align*}
count arrivals representing particle $1$ jumping left and particle $2$ jumping right, respectively. Similarly,
$D(t) = D_1 (t) + D_2 (t)$, where
\begin{align*}
 D_1 (t) & := \#\{ s \in [0,t] : X_1 (s) = X_1(s-) + 1 \}, \\
D_2 (t) & := \#\{ s \in [0,t] : X_2 (s) = X_2(s-) - 1 \} .
\end{align*}
We have the representations $\eta(t) - \eta(0) = A(t) - D(t)$, 
\[ X_1 (t) - X_1(0) = D_1(t) - A_1(t), \text{ and } X_2 (t) - X_2(0) = A_2(t) - D_2 (t) , \text{ for } t \in \RP, \]
where $X_2(0) = X_1(0) + 1 + \eta(0)$. 
Conditional on $\eta[0,t] := (\eta_s)_{s \in [0,t]}$, we have that $A_1(t) \sim \text{Bin} ( A(t), p_1)$, $A_2(t) \sim \text{Bin} (A(t), 1-p_1)$,
where $p_1 := a_1/(a_1+b_2)$, and  $D_1(t) \sim \text{Bin} ( D(t), p_2)$, $D_2(t) \sim \text{Bin} (D(t), 1-p_2)$,
where $p_2 := b_1/(a_2+b_1)$. Moreover, given $\eta[0,t]$,
$(A_1(t),A_2(t))$ is independent of $(D_1(t),D_2(t))$. 
Hence we see that
\begin{align*}
\IE ( X_1 (t) - X_1 (0) \mid \eta[0,t] ) & = - p_1 A(t) + p_2 D(t), \\
\IE ( X_2 (t) - X_2 (0) \mid \eta[0,t] ) & = (1- p_1) A(t) - (1- p_2) D(t), \\
\Var ( X_i (t) \mid \eta[0,t] ) & = p_1 (1-p_1) A(t) + p_2 (1-p_2) D(t) , \text{ for } i \in \{1,2\}.\end{align*}
Now, $A(t) \sim \text{Po} (\lambda t)$, and, by Burke's theorem~\cite{burke}, $D(t) \sim \text{Po} (\lambda t)$ under $\tIP$.
It follows that 
\begin{align*} \tIE \left[ \Var ( X_i (t) \mid \eta[0,t] ) \right] & = \left[ p_1 (1-p_1) + p_2 (1-p_2) \right] \lambda t, \text{ and} \\
\tVar \left[ \IE ( X_1 (t) \mid \eta[0,t] ) \right] & = \left[ p_1^2 + p_2^2 \right] \lambda t - 2p_1 p_2 \tCov ( A(t), D(t) )  .\end{align*}
Note that
\[ A (t) D (t) = \frac{1}{2} \left[ A (t)^2 + D(t)^2 - (A (t) - D (t ))^2 \right] ,\]
so
\[ \tIE [ A(t) D(t) ] = \lambda^2 t^2 + \lambda t - \frac{1}{2} \tIE [ (\eta(t)-\eta(0))^2 ]. \]
That is,
\[ \tCov ( A(t), D(t) ) = \lambda t - \frac{1}{2} \tVar ( \eta(t) - \eta(0) ) .\]
Under the stationary distribution, $\eta$ has finite variance, so $\sup_{t \in \RP} \tVar ( \eta(t) - \eta(0) )  < \infty$. Hence,
using the total variance formula and collecting the above computations, we get
\begin{align*} \tVar ( X_1 (t) ) & = \tIE \left[ \Var ( X_1 (t) \mid \eta[0,t] ) \right] + \tVar \left[ \IE ( X_1 (t) \mid \eta[0,t] ) \right]  \\
& = \left[ p_1 (1-p_1) + p_2 (1-p_2) + (p_1 - p_2)^2 \right] \lambda t + O(1) \\
& = \left[ p_1 + p_2 - 2 p_1 p_2 \right] \lambda t + O(1) \\
& = \frac{a_1 a_2 + b_1 b_2}{a_2+b_1} t + O(1) .\end{align*}
From here, we verify~\eqref{eq:stationary-variance}, to conclude the proof.
\end{proof}

\section{Discussion and connection to continuum models}
\label{sec:discussion}

\subsection{Diffusions with rank-based interactions}
\label{sec:diffusions}

Continuum (diffusion) models, in which $N+1$ particles perform
Brownian motions with drifts and diffusion coefficients determined by their ranks, 
have been extensively studied in recent years, and include the \emph{Atlas model} and its relatives:
see~\cite{pp,bfk,cdss,ik,ipbkf,kpsAIHP,tsai,sarantsevAIHP,sarantsevEJP} and references therein. 
An informative recent overview of the literature is given in~\cite[\S 2]{sarantsevAIHP}.
In this section we describe these models using some stochastic differential equations (SDEs) rather informally, so that we can make comparisons;
precise formulations, existence and uniqueness results can be found in the references cited.

In the early
versions of these models (e.g.~\cite{bfk,pp}), the particles move independently, and are allowed to pass through each other, but exchange drift and diffusion parameters
when they do so. For $t\in \RP$, denote by $y_i (t)$ the location of particle with label $i \in [N+1]$,
and let $x_i (t)$ denote the location of the particle with \emph{rank} $i$ ($i=1$ being the leftmost particle),
so $x_1 (t) \leq \cdots \leq x_{N+1} (t)$ is an ordering of $y_1 (t), \ldots, y_{N+1} (t)$. 
Also let $r_i (t)$ denote the rank of particle with label $i$. 
Let $u_1, \ldots, u_{N+1} \in \R$ and $\sigma_1, \ldots, \sigma_{N+1} \in (0,\infty)$ be collections
of drift and volatility parameters. Let $W_1, \ldots, W_{N+1}$ be independent, standard $\R$-valued Brownian motions. 
 Suppose that
\begin{equation}
\label{eq:SDE-labelled-symmetric}
\ud y_i (t) = u_{r_i(t)} \ud t + \sigma_{r_i(t)} \ud W_i (t) , \text{ for } i \in [N+1], \end{equation}
i.e., particle $i$ performs Brownian motion with drift and volatility parameters determined by its rank;
if $u_j \equiv u$ and $\sigma_j \equiv \sigma$ are constant, the particles are independent, otherwise, there is interaction
mediated by the ranks.
The Atlas model~\cite{bfk,ipbkf} has $u_1 > 0$, $u_2 = \cdots = u_{N+1} = 0$, and $\sigma_1 = \cdots = \sigma_{N+1} =1$; cf.~Example~\ref{ex_N_sheep} above.

 In terms of the ordered particles, it can be shown (e.g.~\cite[Lem.~1]{ipbkf}) that,
for independent standard Brownian motions $\tW_1, \ldots, \tW_{N+1}$,  
\begin{equation}
\label{eq:SDE-ordered-symmetric}
\ud x_i (t) = u_{i} \ud t + \sigma_{i} \ud \tW_i (t) + \frac{1}{2} \ud L^{i} (t) - \frac{1}{2} \ud L^{{i+1}} (t) , \text{ for } i \in [N+1],
  \end{equation}
where $L^{i}(t)$ is the local time at~$0$ of $x_i-x_{i-1}$ up to time~$t$,
with the convention $L^1 \equiv L^{N+2} \equiv 0$. The local-time terms in~\eqref{eq:SDE-ordered-symmetric}
maintain the order of the~$x_i$, while each particle~$x_i$ has its own intrinsic drift and diffusion coefficients,
so the continuum process $x = (x_1, \ldots,x_{N+1})$ shares key features with our particle process $X = (X_1, \ldots, X_{N+1})$ described in Section~\ref{sec:intro}. However, it turns out that there are important differences inherent in the reflection mechanisms; 
before expanding on this,  we introduce an  extension of~\eqref{eq:SDE-ordered-symmetric}
to more general reflections.

In~\cite{kpsAIHP}, it is demonstrated that a natural extension to~\eqref{eq:SDE-labelled-symmetric} is
\begin{align}
\label{eq:SDE-labelled-asymmetric}
\ud y_i (t) & = u_{r_i(t)} \ud t + \sigma_{r_i(t)} \ud W_i (t) 
% \nonumber\\
%& {} \qquad {} 
+ \left( q^+_{r_i(t)} - \tfrac{1}{2} \right) \ud L^{r_i(t)} (t) -  \left( q^-_{r_i(t)} - \tfrac{1}{2} \right) \ud L^{r_i(t)+1} (t)
, \end{align}
where parameters $q^-_j, q^+_j \in (0,1)$, $j \in [N+1]$,
 satisfy $q_{j+1}^+ + q_j^- = 1$ for all $j \in [N]$. Equivalently, in terms of the $x_i$, the dynamics~\eqref{eq:SDE-labelled-asymmetric}
 amounts to
\begin{equation}
\label{eq:SDE-ordered-asymmetric}
\ud x_i (t) = u_{i} \ud t + \sigma_{i} \ud \tW_i (t) + q_i^+ \ud L^{i} (t) - q_i^- \ud L^{{i+1}} (t) . \end{equation}
The \emph{symmetric} case of~\eqref{eq:SDE-labelled-asymmetric}--\eqref{eq:SDE-ordered-asymmetric} where $q_i^- \equiv q_i^+ \equiv 1/2$ for all $i$ reduces
 to~\eqref{eq:SDE-labelled-symmetric}--\eqref{eq:SDE-ordered-symmetric};
the general case is called \emph{asymmetric}~\cite{sarantsevAIHP}.

As far as we are aware, long-time stability of the continuum system described above has been studied only as far as classifying whether or not the whole system is stable, rather than identifying a full cloud decomposition as we do in Theorem~\ref{thm:main}. 
In the symmetric case, the criterion for stability was 
obtained in~\cite[Thm.~8]{pp} (for the case of constant diffusion coefficients) and~\cite{bfk,ipbkf},
and in the stable case the asymptotic speed was derived; we summarize these results as follows, referring to e.g.~\cite[Prop.~2.2]{sarantsevAIHP} and~\cite[Prop.~2]{ipbkf} for precise formulations.

\begin{theorem}
\label{thm:diffusion}
For the system with symmetric collisions defined by~\eqref{eq:SDE-ordered-symmetric},
write $U_k := k^{-1} \sum_{j=1}^k u_j$ for $k \in [N+1]$. The system is stable if and only if $U_k > U_{N+1}$ for all $k \in [N]$.
Moreover, if stable, 
there is a strong law of large numbers with limiting speed $U_{N+1}$. 
\end{theorem}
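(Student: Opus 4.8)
The plan is to reduce the question to the long-time behaviour of the vector of consecutive gaps of the ordered system, which is a semimartingale reflecting Brownian motion (SRBM) in the orthant $\RP^N$, and then to translate the classical positive-recurrence criterion for such a process into the stated inequalities. Put $z_i(t):=x_{i+1}(t)-x_i(t)\ge 0$ for $i\in[N]$. Differencing~\eqref{eq:SDE-ordered-symmetric}, the process $z=(z_1,\dots,z_N)$ obeys $z(t)=z(0)+\theta t+B(t)+RL(t)$, where $\theta_i:=u_{i+1}-u_i$; $B$ is a Brownian motion with nondegenerate (tridiagonal) covariance matrix, $\Sigma_{ii}=\sigma_i^2+\sigma_{i+1}^2$ and $\Sigma_{i,i+1}=\Sigma_{i+1,i}=-\sigma_{i+1}^2$; $L_i$ is the continuous nondecreasing local time at $0$ of $z_i$; and the reflection matrix $R$ is the symmetric tridiagonal matrix with $R_{ii}=1$ and $R_{i,i-1}=R_{i,i+1}=-\tfrac12$; equivalently, $R=I-\tfrac12 A$ with $A$ the adjacency matrix of the path on $N$ vertices. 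In particular $R$ is positive definite, with eigenvalues $1-\cos(k\pi/(N+1))>0$ for $k\in[N]$, and $R=I-Q$ with spectral radius $\rho(Q)=\cos(\pi/(N+1))<1$. Well-posedness of~\eqref{eq:SDE-ordered-symmetric} as a reflected diffusion, equivalently of the SRBM $z$, is standard; see~\cite{ipbkf} and the references therein.

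For an SRBM in $\RP^N$ with nondegenerate covariance and reflection matrix of the form $R=I-Q$ with $\rho(Q)<1$, the process is positive recurrent if and only if $R^{-1}\theta$ has all components strictly negative, and otherwise its coordinates tend to $+\infty$; moreover, in the constant-volatility case the stationary law is an explicit product of exponential distributions (the Harrison--Williams skew-symmetry condition then holds). For the competing Brownian particle system this dichotomy is exactly~\cite[Thm.~8]{pp} together with~\cite{bfk,ipbkf}, which I would simply invoke. It then remains to determine the sign of $R^{-1}\theta$. Writing $Rw=\theta$ as the discrete Poisson equation $-\tfrac12 w_{i-1}+w_i-\tfrac12 w_{i+1}=u_{i+1}-u_i$ for $i\in[N]$, with boundary values $w_0=w_{N+1}=0$, two successive summations give $w_k=2k\,(U_{N+1}-U_k)$ for $k\in[N]$. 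Hence $R^{-1}\theta<0$ componentwise if and only if $U_k>U_{N+1}$ for every $k\in[N]$, which is the asserted stability criterion. (When $U_k\le U_{N+1}$ for some $k$, the block $\{1,\dots,k\}$ and its complement run at strictly different long-run speeds and separate --- the continuum counterpart of the cloud decomposition of Theorem~\ref{thm:main}.)

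For the speed in the stable case, sum~\eqref{eq:SDE-ordered-symmetric} over $i\in[N+1]$: the interior local-time terms telescope and the boundary ones vanish since $L^1\equiv L^{N+2}\equiv 0$, so $\sum_{i=1}^{N+1}x_i(t)=\sum_{i=1}^{N+1}x_i(0)+(N+1)U_{N+1}\,t+M(t)$, where $M$ is a continuous martingale with quadratic variation $\bigl(\sum_i\sigma_i^2\bigr)t$; the strong law of large numbers for Brownian motion gives $t^{-1}M(t)\to 0$ almost surely. Since $z$ is positive recurrent, $t^{-1}z_i(t)\to 0$ almost surely, so $x_i(t)-x_1(t)=\sum_{j<i}z_j(t)=o(t)$ and all $x_i(t)/t$ converge to a common almost-sure limit; dividing the displayed identity by $(N+1)t$ identifies that limit as $U_{N+1}$.

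The one genuinely substantial ingredient is the sharp positive-recurrence/transience dichotomy for the gap SRBM in terms of the sign of $R^{-1}\theta$ used in the second paragraph. When the $\sigma_i$ are not all equal the skew-symmetry condition fails and there is no product-form stationary distribution to exploit, so one must instead rely on the fluid-model and Lyapunov-function analysis underlying~\cite{pp,bfk,ipbkf}; granting that, the gap reduction, the explicit solution of $Rw=\theta$, and the law of large numbers are all routine.
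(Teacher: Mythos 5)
The paper does not prove Theorem~\ref{thm:diffusion}: it states it as a summary of results from \cite[Thm.~8]{pp}, \cite{bfk,ipbkf}, and \cite[Prop.~2.2]{sarantsevAIHP}, so there is no internal argument to compare against. Your derivation is correct and effectively reconstructs what those references establish. The reduction checks out: differencing~\eqref{eq:SDE-ordered-symmetric} gives the gap process $z_i = x_{i+1}-x_i$ as an SRBM in $\RP^N$ with reflection matrix $R = I-\tfrac{1}{2}A$ ($A$ the path adjacency matrix), which is a symmetric nonsingular $M$-matrix of Harrison--Reiman type with $\rho(\tfrac{1}{2}A)=\cos(\pi/(N+1))<1$; the positive-recurrence criterion $R^{-1}\theta < 0$ then applies; and solving $Rw=\theta$ with $w_0=w_{N+1}=0$ by two summations indeed yields $w_k = 2k(U_{N+1}-U_k)$, which is the stated inequality. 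The telescoping/martingale identity for $\sum_i x_i$ together with positive recurrence of $z$ gives the speed. Two small caveats: your parenthetical that in the unstable case ``its coordinates tend to $+\infty$'' overstates the dichotomy --- when $R^{-1}\theta\not<0$, some gaps may remain tight while others diverge, which is precisely the continuum analogue of the cloud decomposition of Theorem~\ref{thm:main}; this is not used in your argument, which only needs the equivalence for positive recurrence. Second, the step ``$z$ positive recurrent $\Rightarrow t^{-1}z_i(t)\to 0$ a.s.''\ deserves a sentence of justification, for instance via exponential moments of excursion durations and suprema from a compact set (the continuum counterpart of the argument in Lemma~\ref{lem:tau-Y-moments}). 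Finally, the comparison with the paper's own lattice machinery is illuminating: the SRBM criterion $R^{-1}\theta<0$ plays the role of the stable traffic equation $\lambda(I-P)^{-1}<\mu$ in Proposition~\ref{prop:stable-Jackson} and Remarks~\ref{rems:stable}, and the explicit solution $w_k=2k(U_{N+1}-U_k)$ is the continuum mirror of the formula~\eqref{eq:rho-stable} and the speeds reformulation~\eqref{eq:speeds-condition}.
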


The stability condition here is comparable to~\eqref{eq:speeds-condition}, but different in detail; we elaborate on this in the next section. 
In the general (asymmetric) setting of~\eqref{eq:SDE-ordered-asymmetric}, the criterion for stability is also known:
see e.g.~\cite[Prop.~2.1]{sarantsevAIHP}, and involves the inverse of a tridiagonal matrix, reminiscent of the Jackson criterion (see Remarks~\ref{rems:stable}).
In the next section, we turn to the relationship between the continuum model and lattice models, which is multifaceted but has been partially elaborated in~\cite[\S 3]{kpsAIHP}.

\subsection{Particle models with elastic collisions}
\label{sec:extensions}

It turns out that a suitable discrete-space parallel to the continuum model in Section~\ref{sec:diffusions}
is obtained by modifying the collision mechanism for the model in Section~\ref{sec:intro} from the exclusion rule to an \emph{elastic} collision rule.
We sketch here only the main idea; a pertinent discussion can also be found in~\cite[\S 3]{kpsAIHP}.

In the case $N=1$, two particles have jump rates $a_1, b_1$ and $a_2, b_2$. Under the exclusion rule,
if the particles are adjacent, the total activity is reduced (particle $1$ jumps left at rate $a_1$, while particle $2$ jumps right at rate $b_2$).
Suppose instead the total activity is maintained, by particles transferring `momentum' to their neighbours when they attempt to jump, so that particle $1$ jumps left at rate $a_1+a_2$,
while particle $2$ jumps right at rate $b_1 + b_2$. We call this the \emph{elastic} collision rule; see Figure~\ref{fig:elastic} for an illustration for a 6-particle configuration.
A slight variation on this is to modify the process so that `collisions' mean that two particles occupy the same site, rather than adjacent sites. The elastic interaction is then equivalent
to the two particles being permitted to pass each other, exchanging jump parameters as they do so (similarly to the diffusion model described in Section~\ref{sec:diffusions}).

\begin{figure}[t]
\centering
\scalebox{0.85}{
 \begin{tikzpicture}[domain=0:1, scale=1.0]
\draw[dotted,<->] (0,0) -- (13,0);
\node at (13.4,0)       {$\Z$};
\draw[black,fill=white] (1,0) circle (.5ex);
\draw[black,fill=black] (2,0) circle (.5ex);
\node at (2,-0.6)       {\small $X_1 (t)$};
\draw[black,fill=white] (3,0) circle (.5ex);
\draw[black,fill=black] (4,0) circle (.5ex);
\node at (4,-0.6)       {\small $X_2 (t)$};
\draw[black,fill=black] (5,0) circle (.5ex);
\node at (5,-0.6)       {\small $X_3 (t)$};
\draw[black,fill=white] (6,0) circle (.5ex);
\draw[black,fill=white] (7,0) circle (.5ex);
\draw[black,fill=white] (8,0) circle (.5ex);
\draw[black,fill=black] (9,0) circle (.5ex);
\node at (9,-0.6)       {\small $X_4 (t)$};
\draw[black,fill=black] (10,0) circle (.5ex);
\node at (10,-0.6)       {\small $X_5 (t)$};
\draw[black,fill=black] (11,0) circle (.5ex);
\node at (11,-0.6)       {\small $X_6 (t)$};
\draw[black,fill=white] (12,0) circle (.5ex);
\node at (1.3,0.6)       {\small rate $a_1$};
\node at (2.5,0.6)       {\small $b_1$};
\draw[black,->,>=stealth] (2,0) arc (30:141:0.58);
\draw[black,->,>=stealth] (4,0) arc (30:141:0.58);
\node at (3.5,0.6)       {\small $a_2+a_3$};
\node at (5.5,0.6)       {\small $b_2+b_3$};
\draw[black,->,>=stealth] (9,0) arc (30:141:0.58);
\node at (8.5,0.6)       {\small $a_4+a_5+a_6$};
\node at (11.5,0.6)       {\small $b_4+b_5+b_6$};
\draw[black,->,>=stealth] (2,0) arc (150:39:0.58);
\draw[black,->,>=stealth] (5,0) arc (150:39:0.58);
\draw[black,->,>=stealth] (11,0) arc (150:39:0.58);
\end{tikzpicture}}
\caption{Schematic of a lattice model with $N+1 = 6$ particles and an elastic collision rule. Filled circles represent particles, empty circles represent unoccupied lattice sites. Within each block of adjacent particles, only the particles at the extreme left or right can jump, but their jump rates are the sum of the intrinsic rates for all particles in the block. An essentially equivalent model is obtained by permitting particles to pass through each other, exchanging parameters as they do so.}
\label{fig:elastic}
\end{figure}
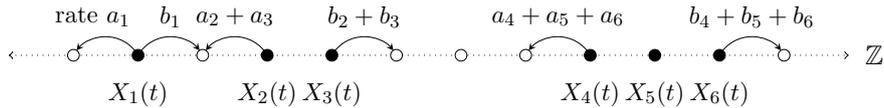

The elastic model is more homogeneous than the exclusion model, in the following sense: regardless of the present configuration, the rate at which
a single coordinate changes by $+1$ is constant (namely $B_{N+1} :=\sum_{i=1}^{N+1} b_i$), as is the rate at which a single coordinate changes by $-1$  ($A_{N+1}:=\sum_{i=1}^{N+1} a_i$).
Another way of saying this is that the centre of mass of the particle system performs a continuous-time simple random walk on $(N+1)^{-1} \Z$
with left jump rate $A_{N+1}$ and right jump rate $B_{N+1}$; thus it has speed $U_{N+1} :=(B_{N+1} - A_{N+1})/(N+1)$. Furthermore, considering just the leftmost $k \in [N+1]$ particles,
their centre of mass has jump rates $A_k$, $B_k$, and speed $U_k = (B_k-A_k)/k$. By a similar reasoning to Algorithm~\ref{alg:partition}, it becomes very plausible that the condition for stability of
the system should be $U_k > U_{N+1}$ for all $k \in [N]$, exactly as in Theorem~\ref{thm:diffusion}. It is natural to conjecture:

\begin{conjecture}
\label{conj:elastic}
In the lattice model with elastic collisions, the system consists of a single stable cloud if and only if $U_k > U_{N+1}$ for all $k \in [N]$. Moreover,
if the system is stable, there is a strong law of large numbers with limiting speed~$U_{N+1}$.
\end{conjecture}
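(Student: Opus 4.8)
\medskip

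\noindent\emph{Proof plan.} Throughout, work with the elastic model of Section~\ref{sec:extensions} (adjacent-block version), writing $\xi(t)=(X_1(t),\eta_1(t),\dots,\eta_N(t))$. Exactly as in the exclusion case, the gap vector $\eta=(\eta(t))_{t\in\RP}$ is an autonomous continuous-time Markov chain on $\ZP^N$: the decomposition of $[N+1]$ into maximal runs of adjacent particles (``blocks'') is a function of the set $\{j:\eta_j=0\}$, the elastic rates within a block are sums of intrinsic rates, and the effect of each jump on $\eta$ depends only on $\eta$; all rates are bounded by $A_{N+1}+B_{N+1}$, and $\eta$ is irreducible on $\ZP^N$ since every $b_i>0$ (customers can always be pushed leftward out of the system, and pumped in via the rightmost rank). ``A single stable cloud'' will mean that this chain $\eta$ is ergodic, its stationary law then being the limiting law of the within-cloud gaps. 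Let $\cL$ be the generator and, for $k\in[N+1]$, put $S_k(t):=\sum_{i=1}^k X_i(t)$. Two consequences of the block structure are crucial. First, since in any configuration each rate $a_i$ (respectively $b_i$) is ``used'' by exactly one particle --- the left (respectively right) end of the block containing rank $i$ --- one has $\cL S_{N+1}\equiv B_{N+1}-A_{N+1}=(N+1)U_{N+1}$ (indeed $S_{N+1}(t)-S_{N+1}(0)$ is a difference of independent Poisson processes of rates $B_{N+1},A_{N+1}$), while for $k\in[N]$
\[
\cL S_k(\eta)=kU_k-E_k(\eta),\qquad E_k\ge 0,\quad E_k=0 \text{ on } \{\eta_k\ge1\},\quad E_k\ge a_{k+1}+b_k \text{ on } \{\eta_k=0\},
\]
the correction $E_k$ collecting the parts, lying beyond level $k$, of the left/right jump rates of the block that straddles level $k$ (recall $b_k>0$). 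Second, the translation-invariant combinations $W_k:=S_k-\tfrac{k}{N+1}S_{N+1}$ are functions of $\eta$ alone, with
\[
-W_k(\eta)=c_k+\frac{1}{N+1}\sum_{j=1}^N\min\{\,j(N+1-k),\,k(N+1-j)\,\}\,\eta_j,\qquad c_k>0,
\]
so each $-W_k$ is a positive linear functional of $\eta$ with finite sublevel sets, and the second difference of $k\mapsto-W_k(\eta)$ equals $-(1+\eta_k)$, so $k\mapsto-W_k(\eta)$ is strictly concave on $[N]$.

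\medskip
\noindent\emph{Necessity, and the strong law.} Suppose $\eta$ is ergodic with stationary law $\pi$. The Poisson representation of $S_{N+1}$ gives $t^{-1}S_{N+1}(t)\to(N+1)U_{N+1}$ a.s.; ergodicity of $\eta$ forces $X_{N+1}(t)-X_1(t)=N+\sum_j\eta_j(t)=o(t)$ a.s.\ (the standard excursion bound for a positive recurrent chain with bounded rates), whence $t^{-1}X_i(t)\to U_{N+1}$ a.s.\ for every $i$ --- this is the strong law asserted in the conjecture. For necessity of the condition, Dynkin's formula, the law of large numbers for the resulting martingale, and the pointwise ergodic theorem give $t^{-1}S_k(t)\to kU_k-\IE_\pi E_k$ a.s.; comparing with $t^{-1}S_k(t)\to kU_{N+1}$ gives $\IE_\pi E_k=k(U_k-U_{N+1})$, and since $E_k\ge b_k\1{\eta_k=0}$ with $b_k>0$ and $\pi(\eta_k=0)>0$ (as $\0$ is reachable), we conclude $U_k>U_{N+1}$ for every $k\in[N]$.

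\medskip
\noindent\emph{Sufficiency.} Assume $U_k>U_{N+1}$ for all $k\in[N]$, and put $\delta:=\min_{k\in[N]}k(U_k-U_{N+1})>0$. From the identities above $\cL(-W_k)(\eta)=-k(U_k-U_{N+1})+E_k(\eta)$, so on the interior $\{\eta_j\ge1 \text{ for all } j\}$ every $-W_k$ has drift $\le-\delta$. The natural Foster--Lyapunov candidate is $V(\eta):=\max_{k\in[N]}(-W_k(\eta))$, nonnegative with finite sublevel sets. The strict concavity of $k\mapsto-W_k$ yields a structural lemma: there is $C=C(N)$ such that for every $\eta$ outside a finite set some $k=k(\eta)$ has $\eta_k\ge1$ and $-W_k(\eta)\ge V(\eta)-C$. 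Indeed, if $k^\star$ maximises $-W_j(\eta)$ and $\eta_{k^\star}=0$, then for large $\eta$ the block $\{l,\dots,r\}$ containing $k^\star$ satisfies $l\ge2$ or $r\le N$; since $\eta_j=0$ throughout $[l,r-1]$, the discrete slope of $k\mapsto-W_k$ steps down by exactly $1$ across that range, forcing $-W_{l-1}(\eta)$ and $-W_{r}(\eta)$ to lie within $O(N^2)$ of $V(\eta)$; one takes $k=r$ if $r\le N$ (then $\eta_r\ge1$) and $k=l-1$ otherwise (then $l\ge2$ and $\eta_{l-1}\ge1$). For such $k$, $\cL(-W_k)(\eta)=-k(U_k-U_{N+1})\le-\delta$.

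\medskip
The remaining --- and I expect hardest --- step is to convert this pointwise statement into a genuine drift condition $\cL V\le-\eps$ outside a finite set. The obstruction is the non-smoothness of the maximum, compounded by the positive corrections $E_i$ attached, on a boundary face $\{\eta_j=0\}$, to pieces $-W_i$ that may themselves be near-maximal; a fixed combination $\sum_k w_k(-W_k)$ provably fails, because the good index $k(\eta)$ varies with $\eta$. I would instead pass to the fluid limit and invoke a Dai-type positive-recurrence criterion: the fluid model has constant interior velocity $\bigl((b_{j+1}-a_{j+1})-(b_j-a_j)\bigr)_{j\in[N]}$ and a reflection field on each face dictated by the block mechanism, and one shows that $\max_k\bar\Gamma_k$ --- where $\bar\Gamma_k:=\bigl(k(N+1-k)\bigr)^{-1}\sum_{j}\min\{j(N+1-k),k(N+1-j)\}\,\bar\eta_j$ is the fluid centre-of-mass gap between the leftmost $k$ particles and the rest --- is non-increasing and drains to $0$ in finite time, its rate of decrease being controlled precisely by $U_k>U_{N+1}$ (the leftmost-$k$ centre of mass outpaces the remaining one). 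The crux is the analysis of the fluid dynamics on the boundary faces: here the queue-by-queue Goodman--Massey argument that suffices for the exclusion model must be replaced by a block-by-block one. (A smoothed, state-dependent Lyapunov function built from the $-W_k$ is an alternative, but controlling the smoothing uniformly in $\eta$ looks no easier.) With the stability criterion established, a full analogue of Theorem~\ref{thm:main} for the elastic system --- the cloud decomposition, cloud speeds, and a limiting within-cloud law (not product-geometric in general) --- should follow along the same lines.
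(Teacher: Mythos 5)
First, a point of reference: the paper offers no proof of this statement. It is posed as a conjecture, and the authors explicitly write that they do not attempt to establish it, so there is no proof of theirs to compare yours against. Your setup is sound and goes beyond what the paper records: the autonomy of $\eta$, the exact identity $\cL S_{N+1}\equiv(N+1)U_{N+1}$, the formula $\cL S_k=kU_k-E_k$ with $E_k\geq(a_{k+1}+b_k)\1{\eta_k=0}$ and $E_k=0$ on $\{\eta_k\geq1\}$, and the representation of $-W_k$ with coefficient $\frac{1}{N+1}\min\{j(N+1-k),k(N+1-j)\}$ and second difference $-(1+\eta_k)$ all check out. Consequently your necessity argument (Dynkin's formula, the martingale law of large numbers, and the ergodic theorem forcing $\IE_\pi E_k=k(U_k-U_{N+1})>0$ because $\pi(\eta_k=0)>0$) is complete modulo routine details, as is the strong law with speed $U_{N+1}$ conditional on ergodicity.

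The sufficiency direction, however, contains a genuine gap, which to your credit you flag yourself. The structural lemma only shows that at each large $\eta$ \emph{some} near-maximal component $-W_{k(\eta)}$ has drift at most $-\delta$; this does not yield $\cL V\leq-\eps$ for $V=\max_k(-W_k)$, because the generator of a maximum is controlled by the post-jump maximiser rather than the pre-jump one, and components $-W_i$ with $i$ inside the straddling block can carry a positive drift term $E_i$ while sitting within $O(N^2)$ of the maximum, so jumps favourable to those components can push $V$ up. The fluid-limit route you propose (a Dai-type criterion) is the right kind of tool, but its key assertion --- that $\max_k\bar\Gamma_k$ is non-increasing and drains to zero in finite time under the reflection field induced by the block mechanism --- is precisely the hard step and is asserted rather than proved; in particular the fluid dynamics on faces where several gaps vanish simultaneously, which is where the block rates (sums of intrinsic rates over an entire block) depart most sharply from the Jackson/exclusion picture that the paper's Goodman--Massey machinery handles, are not specified. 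Until that boundary analysis is carried out, the ``if'' half of the conjecture remains open: the proposal establishes the ``only if'' half and the law of large numbers, but not the full statement.
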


As this model is not the focus of the present paper, we do not attempt to establish Conjecture~\ref{conj:elastic} here. Additionally, we anticipate that a similar algorithm to Algorithm~\ref{alg:partition},
based on the (somewhat simpler) algebra of cloud speeds in the elastic case, can be developed to obtain the full cloud decomposition for the elastic model.

This stability criterion in Conjecture~\ref{conj:elastic} coincides with ours in Corollary~\ref{cor:stable} when $N=1$ (but not for $N \geq 2$),
and even for $N=1$, the limiting speed is different. As an example, take
 $N=1$ and $a_1 = a$, $b_1 = b$, $a_2 = b_2 = 1$. Then the stable case of the exclusion model $(b>a)$ has
 asymptotic speed $\hv_2 = (b-a)/(1+b)$, while the stable case of the elastic model has asymptotic speed $U_2 = (b-a)/2$. 

\subsection{Scaling limits and heavy traffic}
\label{sec:scaling}

We indicate briefly, by an example, how the diffusion models of Section~\ref{sec:diffusions} arise as scaling limits of certain near-critical particle system models,
and that the limits are elastic even when the particle system is not: see~\cite[\S 3]{kpsAIHP}
for more detail. As an elementary starting point, suppose that $X^{(s)}_t$, $t\in \RP$ is a continuous-time, simple random walk on $\Z$ with jump rate~$1$ to the left and $1 + s^{-1/2} u$ to the right,
where $u \in \R$ and $s \in (0,\infty)$ is a scale parameter. Then $\IE ( X^{(s)}_{s(t+\delta)} - X^{(s)}_{st} ) = u \delta \sqrt{s}$,
$\lim_{s \to \infty} \IE ( ( X^{(s)}_{s(t+\delta)} - X^{(s)}_{st} )^2 / s) = 2\delta$, and standard functional central limit theorem arguments show that
$( s^{-1/2} X_{st} )_{t \in [0,1]}$ converges weakly, as $s \to \infty$, to $(x_t)_{t \in [0,1]}$, a diffusion with drift $u$ and volatility~$2$, i.e.,
satisfying the SDE~$\ud x_t = u \ud t + 2 \ud W_t$. 

More generally, consider the particle model with exclusion interaction from Section~\ref{sec:intro}, with $N+1$ particles with jump rates
$a_1 = \cdots = a_{N+1} = 1$ and $b_i = 1 + s^{-1/2} u_i$ for $i \in [N+1]$, for parameters $u_i \in \R$. Then, ignoring the interactions,
as $s \to \infty$ the particle motions converge to diffusions with drifts $u_i$ and volatility~$2$. In the presence of exclusion interactions, 
it can be shown that the particle system
in this case converges weakly, as $s \to \infty$, to the system~\eqref{eq:SDE-ordered-symmetric} with $\sigma_i^2 \equiv 2$; this is a special 
case of the results of~\cite[\S 3]{kpsAIHP}. Equivalently, in the Jackson network context, this is a \emph{heavy traffic} limit, as each queue is asymptotically critically loaded~\cite{reiman};
see e.g.~\cite{gz} for some of the vast literature on heavy-traffic limits for queueing networks and their connections to reflecting diffusions.

The apparent discrepancy between the stability criterion for the inelastic (exclusion) system in Corollary~\ref{cor:stable} with that for the elastic (diffusion) system
in Theorem~\ref{thm:diffusion} can now be resolved. For simplicity, take $N=2$, as in Example~\ref{ex:small}. Then the stability condition is that~\eqref{eq:3-particles-rho1} and~\eqref{eq:3-particles-rho2} both hold;
for $a_i=1$ and $b_i = 1 + s^{-1/2} u_i$, inequality~\eqref{eq:3-particles-rho1} takes the form $2 u_1 + u_2 > 2 u_2 + u_3 + o(1)$, as $s \to \infty$, which, in the limit,
gives $U_1 > U_3$ in the notation of Theorem~\ref{thm:diffusion}. Similarly~\eqref{eq:3-particles-rho2} translates in the limit to the condition $U_2 > U_3$. Thus
 the stability condition in Theorem~\ref{thm:diffusion} emerges via a particular limiting regime of our Corollary~\ref{cor:stable}. 
Moreover, the stable limiting speed~\eqref{eq:3-particles-speed} converges to the simple linear average $U_3$ in the $s \to \infty$ limit.

\section*{Acknowledgments}

The authors are grateful to Sunil Chhita for directing us to some relevant  literature, and to Aleksandar Mijatovi\'c for helpful discussions on diffusions with rank-based coefficients.
We also thank several anonymous referees whose constructive observations and suggestions on earlier versions of the manuscript
have contributed much to the present version.   
The work of MM and AW was supported by EPSRC grant EP/W00657X/1.

\end{document}